\documentclass[12 pt]{article}
\usepackage[pdftex]{graphicx}
\usepackage{amsmath}
\usepackage{mathtools}
\usepackage{amsthm}
\usepackage{amsfonts}
\usepackage{amssymb}
\usepackage[margin=1.0in]{geometry}
\usepackage{tikz-cd}
\usetikzlibrary{cd}
\usepackage{enumitem}

\newtheorem{theorem}{Theorem}[section]
\newtheorem{cor}[theorem]{Corollary}
\newtheorem{lemma}[theorem]{Lemma}
\newtheorem{prop}[theorem]{Proposition}

\newenvironment{customthm}[1]
{\innercustomthm}
{\endinnercustomthm}

\newtheorem*{theorem*}{Theorem}

\theoremstyle{definition}
\newtheorem{defin}[theorem]{Definition}
\newtheorem{fact}[theorem]{Fact}
\newtheorem{exa}[theorem]{Example}

\theoremstyle{remark}
\newtheorem*{rem}{Remark}

\newcommand{\flim}[1]{\mathrm{Flim}(#1)}
\newcommand{\age}[1]{\mathrm{Age}(#1)}

\newcommand{\fr}{Fra\"iss\'e }
\newcommand{\frcomma}{Fra\"iss\'e, }
\renewcommand{\phi}{\varphi}

\newcommand{\emb}{\mathrm{Emb}}
\newcommand{\aut}[1]{\mathrm{Aut}(#1)}

\newcommand{\dom}[1]{\mathrm{dom}(#1)}

\newcommand{\im}[1]{\mathrm{Im}(#1)}

\renewcommand{\b}[1]{\mathbf{#1}}
\newcommand{\bb}[1]{\mathbb{#1}}
\renewcommand{\c}[1]{\mathcal{#1}}

\newcommand{\crit}{\mathrm{Crit}}
\newcommand{\Left}{\mathrm{Left}}
\newcommand{\Path}{\mathrm{Path}}
\renewcommand{\sp}{\mathrm{Sp}}
\newcommand{\ac}{\mathrm{AC}}

\renewcommand{\succ}{\mathrm{Succ}}
\newcommand{\is}{\mathrm{IS}}

\newcommand{\Int}{\mathrm{Int}}
\newcommand{\Irr}{\mathrm{Irr}}
\newcommand{\ct}{\mathrm{CT}}
\newcommand{\aemb}{\mathrm{AEmb}}
\newcommand{\oemb}{\mathrm{OEmb}}
\def\-{\raisebox{.30pt}{-}}

\begin{document}
	\title{On big Ramsey degrees for binary free amalgamation classes}
	\author{Andy Zucker}
	\maketitle
	
\begin{abstract}
	Generalizing and simplifying recent work of Dobrinen, we show that if $\c{L}$ is a finite binary relational language and $\c{F}$ is a finite set of finite irreducible $\c{L}$-structures, then the class $\c{K} = \mathrm{Forb}(\c{F})$ has finite big Ramsey degrees.
	\let\thefootnote\relax\footnote{2020 Mathematics Subject Classification. Primary: 05D10. Secondary: 03E02.}
	\let\thefootnote\relax\footnote{The author was supported by NSF Grants \ DMS 1803489 and DMS  and the ANR project AGRUME (ANR-17-CE40-0026).}
\end{abstract}
	
The infinite Ramsey theorem \cite{Ram} states that for any $k, r< \omega$ and any coloring $\chi\colon [\bb{N}]^k\to r$, there is some infinite $S\subseteq \bb{N}$ with $\big|\chi\big[[S]^k\big]\big| = 1$, i.e.\ that $\chi$ is monochromatic on $k$-tuples from $S$. The situation becomes more interesting when we place restrictions on the set $S$ from the theorem. For instance, suppose $\chi\colon [\bb{Q}]^k\to r$ is a coloring. Can we find an infinite set $S\subseteq \bb{Q}$ which is \emph{order-isomorphic to $\bb{Q}$} and with $\big|\chi\big[[S]^n\big]\big| = 1$? The answer is no; let $\bb{Q} = \{q_n: n< \omega\}$ be an enumeration. We use this enumeration to define the following $2$-coloring of pairs: given $m< n< \omega$, set $\chi(\{q_m, q_n\}) = 0$ if $q_m < q_n$, and set $\chi(\{q_m, q_n\}) = 1$ if $q_n < q_m$. Then whenever $S\subseteq \bb{Q}$ is order isomorphic to $\bb{Q}$, we can find pairs from $S$ of each color. 

Remarkably, Galvin \cite{Gal} shows that if $r < \omega$ and $\chi\colon [\bb{Q}]^2\to r$ is a coloring, there is some $S\subseteq \bb{Q}$ which is order isomorphic to $\bb{Q}$ and with $\big|\chi\big[[S]^2\big]\big|\leq 2$. This was extended by Devlin \cite{Dev}, who showed that for every $k< \omega$, there is $t_k< \omega$ so that whenever $r< \omega$ and $\chi\colon [\bb{Q}]^k\to r$ is a coloring, there is some $S\subseteq \bb{Q}$ which is order isomorphic to $\bb{Q}$ and with $\big|\chi\big[[S]^k\big]\big|\leq t_k$. While Devlin mentions that the existence of $t_k$ was known to Laver, Devlin gives the optimal value of $t_k$; it is the $k^{th}$ odd tangent number. The first few odd tangent numbers are $t_2 = 2$, $t_3 = 16$, $t_4 = 272$. So for instance, there is a $16$-coloring of triples of rationals so that every $S\subseteq \bb{Q}$ which is order-isomorphic to $\bb{Q}$ sees all $16$ colors, whereas given any $17$-coloring $\chi$ of triples of rationals, there is some $S\subseteq \bb{Q}$ order-isomorphic to $\bb{Q}$ so that $\big|\chi\big[[S]^3\big]\big|\leq 16$.

In order to prove Devlin's theorem, one first identifies the rationals with the infinite rooted binary tree $2^{< \omega}$. Then one uses a Ramsey theoretic result on trees, and transfers this back to Ramsey theoretic information about the rationals. The tree Ramsey theorem used here is Milliken's theorem \cite{Mil}, the proof of which is not easy. To prove it, one first proves the Halpern--L\"auchli theorem \cite{HL} and repeatedly uses this theorem to complete the inductive step in Milliken's theorem. The proof of the Halpern--L\"auchli theorem is also not easy, and there are several known proofs. Direct, ``combinatorial" proofs do exist, but we draw attention to a proof due to Harrington which uses some machinery from set theory, namely techniques from forcing and the Erd\H{o}s-Rado theorem. Versions of this proof can be found in \cite{FarTod} and \cite{DobForcing}. We remark that though the arguments use some of the forcing formalism, the key object is the forcing poset; the use of this larger object to prove facts about smaller objects is analogous to proving facts about $\bb{N}$, such as van der Waerden's theorem, using the larger space $\beta \bb{N}$ of ultrafilters on $\bb{N}$.  

A natural setting to study combinatorial problems of this form is that of a \emph{\fr structure}. Suppose $\c{L}$ is a countable relational language and that $\b{K}$ is a countably infinite $\c{L}$-structure. We say that $\b{K}$ is \emph{\fr}if whenever $\b{A}\subseteq \b{K}$ is finite and $f\colon \b{A}\to \b{K}$ is an embedding, then there is an automorphism $g$ of $\b{K}$ with $g|_{\b{A}} = f$. Recall that $\age{\b{K}} := \{\b{A} \text{ finite}: \emb(\b{A}, \b{K})\neq \emptyset\}$. The class $\age{\b{K}}$ is always hereditary, and if $\b{K}$ is \frcomma it also satisfies the \emph{amalgamation property}: for any $\b{A}, \b{B}, \b{C}\in \age{\b{K}}$ and embeddings $f\colon \b{A}\to \b{B}$ and $g\colon \b{A}\to \b{C}$, there is $\b{D}\in \age{\b{K}}$ and embeddings $r\colon \b{B}\to \b{D}$ and $s\colon \b{C}\to \b{D}$ with $r\circ f = s\circ g$. Conversely, given any hereditary class $\c{K}$ of finite $\c{L}$-structures with the amalgamation property, then modulo some non-triviality assumptions on $\c{K}$, there is a \fr structure $\b{K}$ with $\age{\b{K}} = \c{K}$. This \fr structure is unique up to isomoprhism, and is denoted $\flim{\c{K}}$, the \emph{\fr limit} of $\c{K}$. 

Now consider a \fr structure $\b{K} = \flim{\c{K}}$ and some $\b{A}\in \c{K}$. Is there $t_\b{A}< \omega$ so that for any coloring $\chi\colon \emb(\b{A}, \b{K})\to r$ for some $r< \omega$, there is $\eta\in \emb(\b{K}, \b{K})$ for which $|\im{\chi\circ \eta}|\leq t_\b{A}$. The least number $t_\b{A}$ with this property, if it exists, is called the \emph{big Ramsey degree} of $\b{A}$ in $\c{K}$, and we say that $\c{K}$ has \emph{finite big Ramsey degrees} if every $\b{A}\in \c{K}$ has some finite big Ramsey degree. As an example, if $\c{K}$ is the class of finite linear orders, then $\flim{\c{K}} = \bb{Q}$, and Devlin's result says that $\c{K}$ has finite big Ramsey degrees. Until recently, not many \fr classes were known to have finite big Ramsey degrees; some examples are the classes of finite sets \cite{Ram}, finite graphs \cite{Sauer2}, finite-distance ultrametric spaces \cite{NVT}, and finite linear orders with a labeled partition \cite{LNVTS}. In all of these examples, one codes the structure at hand as a tree and directly uses Ramsey's theorem, Milliken's theorem, or a soft variant thereof.

This situation should be contrasted with what is known about \emph{small} Ramsey degrees (often, the word ``small" is just omitted). Here, instead of searching for $\eta\in \emb(\b{K}, \b{K})$ for which $\chi\circ \eta$ takes few colors, we weaken our demand to only ask for $\eta_\b{B}\in \emb(\b{B}, \b{K})$ for arbitrarily large, but finite $\b{B}\in \c{K}$. Unlike the situation with big Ramsey degrees, where few examples are known, there are many general theorems which give \fr classes with finite Ramsey degrees. An early major result is the theorem of Ne\v{s}et\v{r}il and R\"odl \cite{NR}, who show that every free amalgamation class (to be discussed in more detail soon) in a finite relational language has finite Ramsey degrees. Beyond free amalgamation classes, examples worth mentioning include the class of finite posets \cite{NRPosets} and the class of finite rational-distance metric spaces \cite{N}. More recently, Hubi\v{c}ka and Ne\v{s}et\v{r}il \cite{HN} prove an extremely general theorem giving very mild sufficient conditions for a class to have finite Ramsey degrees; in particular, their framework generalizes every example mentioned in this paragraph.

In a recent, highly technical series of papers, Dobrinen \cite{Dob0, Dob} has shown that for each $n< \omega$, the class of finite graphs which do not embed an $n$-clique has finite big Ramsey degrees. Indeed, a major motivation of the work here was to understand, generalize, and simplify the proof of this result. Even in the simplest case $n = 3$, it is far less simple to code triangle-free graphs using trees. Dobrinen introduces the notion of a ``coding tree" and proves analogs of the Halpern--L\"auchli and Milliken theorems for these objects; the proof of the Halpern--L\"auchli analog is inspired by Harrington's proof and uses forcing and the Erd\H{o}s-Rado theorem.

Recall that if $\c{L}$ is a relational language, an $\c{L}$-structure $\b{F}$ is called \emph{irreducible} if every $a\neq b\in \b{F}$ is contained in some non-trivial relation. If $\c{F}$ is some set of finite irreducible $\c{L}$-structures, then $\mathrm{Forb}(\c{F})$ denotes the class of finite $\c{L}$-structures which do not embed any member of $\c{F}$. We can assume that the members of $\c{F}$ are pair-wise non-embeddable. The class $\mathrm{Forb}(\c{F})$ is always a \fr class, and in fact satisfies a stronger form of the amalgamation property called \emph{free amalgamation}. This means that given an amalgamation problem $f\colon \b{A}\to \b{B}$ and $g\colon \b{A}\to \b{C}$, we can find a solution $r\colon \b{B}\to \b{D}$ and $s\colon \b{C}\to \b{D}$ satisfying the following.
\begin{enumerate}
	\item 
	$\b{D} = \im{r}\cup \im{s}$.
	\item 
	$\im{r}\cap \im{s} = \im{r\circ f} = \im{g\circ s}$.
	\item 
	Whenever $a\neq b\in \b{D}$ is contained in a non-trivial relation, we either have $\{a, b\}\subseteq \im{r}$ or $\{a, b\}\subseteq \im{s}$.
\end{enumerate}
Conversely, every \fr free amalgamation class is of the form $\mathrm{Forb}(\c{F})$ for some set of finite irreducible $\c{L}$-structures. We remark that $\c{F}$ can be infinite. Our main theorem is the following, which can be thought of as the binary case of an ``infinitary Ne\v{s}et\v{r}il--R\"odl theorem."
\vspace{2 mm}

\begin{theorem*}
	\label{Thm:Main}
	Let $\c{L}$ be a finite binary relational language, and suppose $\c{K} = \mathrm{Forb}(\c{F})$ for a finite set $\c{F}$ of finite, irreducible $\c{L}$-structures. Then $\c{K}$ has finite big Ramsey degrees.
\end{theorem*}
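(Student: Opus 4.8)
The plan is to follow the route common to earlier big Ramsey degree theorems: code $\b{K} := \flim{\c{K}}$ by a tree, prove a Milliken-style partition theorem for that tree, and transfer the result back to colourings of $\emb(\b{A}, \b{K})$ for $\b{A} \in \c{K}$. The new content, relative to the classical cases of $\bb{Q}$ and the Rado graph, is that the tree must be a \emph{coding tree} that simultaneously witnesses the constraints imposed by $\c{F}$.

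First I would fix a suitable enumeration $\b{K} = \{v_n : n < \omega\}$ and, for each $n$, record the isomorphism type of $v_n$ over $\{v_0, \dots, v_{n-1}\}$; since $\c{L}$ is binary, this type is exactly the function assigning to each $k < n$ the $2$-type of the pair $\{v_k, v_n\}$. Collecting all $2$-types realised in $\b{K}$ --- including the empty one --- into a finite set $\c{S}$, one obtains a subtree $\b{S} \subseteq \c{S}^{<\omega}$, the \emph{coding tree}, whose level-$n$ nodes are those $s \in \c{S}^n$ for which $\b{K}\!\restriction\! n$ with one new point attached via $s$ still lies in $\c{K}$, and which carries a distinguished \emph{coding node} $c_n$ at level $n$, namely the type of $v_n$ itself. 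The point where finiteness and irreducibility of $\c{F}$ enter is the verification that $\b{S}$ is finitely branching and enjoys a strong \emph{extension property}: because each $\b{F} \in \c{F}$ is irreducible, a node $s$ can always be extended by a new point attached in the empty $2$-type, and the only obstructions to richer extensions are the finitely many configurations that would complete a copy of some $\b{F} \in \c{F}$. This lets me isolate the \emph{strong coding subtrees} of $\b{S}$ --- meet-closed, level-respecting subtrees equipped with their own coding nodes of every type the extension property demands --- and show that every strong coding subtree again codes a copy of $\b{K}$, i.e.\ there is $\eta \in \emb(\b{K}, \b{K})$ whose image is enumerated by the subtree's coding nodes.

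Next I would prove a Halpern--L\"auchli analogue for finite products of coding trees and, by induction on height as in the passage from Halpern--L\"auchli to Milliken, upgrade it to a Milliken-style theorem: for any finite meet-closed set $D \subseteq \b{S}$ carrying a prescribed pattern of coding nodes and any finite colouring of the copies of $D$ inside $\b{S}$, there is a strong coding subtree of $\b{S}$ on which the colouring depends only on the similarity type of the copy. For the Halpern--L\"auchli step I would run Harrington's argument: force with finite approximations to a perfect, level-matched family of end-extensions of the nodes, apply the Erd\H{o}s--Rado theorem to thin the generic so that a colouring of level-sets becomes constant, and --- this is the extra ingredient over the $2^{<\omega}$ case --- add density requirements guaranteeing that the generic object contains coding nodes of every type named by the extension property, so that the homogeneous object obtained is the skeleton of a genuine strong coding subtree.

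Finally, the transfer: an embedding $f$ of a fixed $\b{A} \in \c{K}$ into the copy of $\b{K}$ coded by $\b{S}$ is recovered from the tuple of coding nodes enumerating $f[\b{A}]$, the relevant $2$-types being read off the values of those nodes, so a colouring $\chi$ of $\emb(\b{A}, \b{K})$ pulls back to a finite colouring of the copies of a suitable finite meet-closed pattern $D$. Applying the Milliken-style theorem yields a strong coding subtree on which $\chi$ factors through the similarity type, and composing with the embedding $\eta$ coding that subtree bounds $|\im{\chi \circ \eta}|$ by the number of similarity types of embeddings of $\b{A}$ --- a finite number, which serves as the big Ramsey degree $t_\b{A}$. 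I expect the main obstacle to be the interface between the coding constraints and the Milliken machinery: showing that strong coding subtrees are abundant (the extension property, and its use inside the forcing to keep coding nodes of all required types) and that every embedding of $\b{A}$ can be normalised to one living inside a prescribed strong coding subtree, since it is precisely here that $\c{F}$ being a \emph{finite} set of \emph{irreducible} structures in a \emph{binary} language is used in an essential way.
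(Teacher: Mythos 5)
Your plan follows the standard route and identifies all the right ingredients (coding tree, coding nodes per level, Harrington-style forcing with Erd\H{o}s--Rado, Milliken-style upgrade, transfer via ``similarity types''). Two points where it diverges from or falls short of the paper are worth flagging.

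First, a difference in route: you work with a proper subtree $\b{S}\subseteq \c{S}^{<\omega}$ consisting of ``realizable'' nodes, and with \emph{strong coding subtrees} of $\b{S}$, which is essentially Dobrinen's original framework. The paper deliberately avoids passing to a subtree: it keeps the full tree $T = k^{<\omega}$ and packages the coding as a pair of maps $(c,u)$ assigning one coding node and one unary type to every level. Instead of subtrees it considers \emph{aged embeddings} $T(<|\b{A}|)\to T$, which are required to carry, level by level, an ``age map'' preserving the class of $T(m)$-labeled structures compatible with $\c{K}$. This lets the forcing conditions be plain functions $p\colon B(p)\times N\to T(\ell(p))$ with a simple ordering, and avoids having to maintain a subtree with dedicated splitting/coding levels. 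Your version would still need density requirements in the forcing to keep the generic a genuine strong coding subtree, which is exactly the complexity the paper is trying to remove.

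Second, and more seriously, the transfer step has a gap. You say that an embedding $f\in\emb(\b{A},\b{K})$ pulls back to ``a suitable finite meet-closed pattern $D$'' and that the big Ramsey degree is ``the number of similarity types of embeddings of $\b{A}$.'' But in the free amalgamation setting the meet-closure of the coding nodes $f[\b{A}]$ is \emph{not} the right finite pattern: one must also include levels where the ``age'' of the level set changes (where some previously-realizable labeled extension becomes forbidden), because otherwise the pattern $D$ is not itself a legal aged sub-configuration and the Milliken theorem cannot be applied to it. The paper calls the resulting set the \emph{envelope} $\overline{S}$. Proving that $|\overline{S}|$ is bounded by a function of $|S|$ is where finiteness and irreducibility of $\c{F}$ are actually used (Proposition~\ref{Prop:BoundedCrit}): each ``age change'' critical level is injectively tagged by a pair $(\b{I},\rho)$ where $\b{I}$ is an irreducible substructure of some $\b{F}\in\c{F}$, and since $\c{F}$ is finite and $\c{L}$ is binary this gives a bound depending only on $|S|$. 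On top of that the paper must construct a special $\eta\in\oemb(\b{K},\b{K})$ (via the auxiliary structure $\b{Y}$ of Section~\ref{Sec:RamseyStructure}) so that, for $S\subseteq\ran\eta$, the envelope has bounded size \emph{and} $\Int(\overline{S})=S$; without the latter, distinct $f$'s can collapse to the same envelope and the colouring does not factor cleanly. Your proposal acknowledges that this is ``the main obstacle'' but gives no argument for why the number of relevant patterns is finite; that argument is precisely the new combinatorial content beyond the Rado-graph case.
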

\vspace{2 mm}

The restriction that $\c{F}$ be finite is essential in our proof; it is possible that the conclusion of the theorem fails whenever $\c{F}$ is infinite, and hopefully this can be investigated in future work. Indeed, Sauer \cite{Sauer} has shown that some free amalgamation classes of directed graphs do not have finite big Ramsey degrees. 

The structure of our proof is broadly similar to Dobrinen's. In section \ref{Sec:CodeTree}, we define a notion of coding tree, and our Theorems \ref{Thm:HL} and \ref{Thm:Milliken} are the analogues of the Halpern-L\"{a}uchli and Milliken theorems that we need. The key difference between \cite{Dob0} and \cite{Dob} and what we do here is that our notion of coding tree is greatly simplified. Whereas the ``strong coding trees'' in \cite{Dob0} and \cite{Dob} have levels dedicated individually to splitting and coding, our trees always split and always code. In fact, the underlying tree is just $k^{< \omega}$ for some $k< \omega$, and the ``coding tree" is simply the assignment of one coding node and one unary predicate to each level. This in turn reduces the complexity of the forcing argument.  It would be interesting to try to remove the forcing entirely.

Another motivation of this work was to attempt to understand the \emph{exact} big Ramsey degrees for the classes of $n$-clique-free graphs. This would allow for a potential application to topological dynamics as developed in \cite{Zuc}, analogous to the dynamical interpretation of small Ramsey degree results developed by Kechris, Pestov, and Todor\v{c}evi\'c \cite{KPT}. However, we do not attempt to undertake this at this time. The determination of the exact big Ramsey degrees for all classes from the main theorem will appear in a forthcoming joint work with Balko, Chodounsk\'y, Dobrinen, Hubi\v{c}ka, Kone\v{c}n\'y, and Vena.

The paper is organized as follows. Section~\ref{Sec:EnumStruc} introduces Ramsey degrees in the context of enumerated structures, setting up several conventions we use throughout the paper. Section~\ref{Sec:CodeTree} defines our coding trees and the ``aged embeddings" between them. Section~\ref{Sec:RamseyAE} proves our analogs of the Halpern--L\"auchli and Milliken theorems, regarding colorings of the aged embeddings of one coding tree into another. The final two sections undertake the work of transferring the Ramsey theorem for coding trees back into a Ramsey theorem about structures; Section~\ref{Sec:Envelopes} more abstractly, and Section~\ref{Sec:RamseyStructure} in the specific situation pertaining to the main theorem.

Section~\ref{Sec:RamseyAE} is difficult; on a first reading, the reader might choose to skip it, only reading the statement of Theorem~\ref{Thm:Milliken}. An appendix gives a self-contained and fairly ``combinatorial" introduction to the ideas from forcing needed to understand the proof of Theorem~\ref{Thm:HL}.

\subsection*{Notation}

Our notation is mostly standard. We identify natural numbers $n< \omega$ with their set of predecessors. If $m< n< \omega$, we sometimes write $[m, n] := \{k < \omega: m\leq k\leq n\}$. If $f$ is a function and $S\subseteq \dom{f}$, we write $f[S] := \{f(s): s\in S\}$. Sometimes function composition is simply denoted by $g\cdot f$ rather than $g\circ f$; if $g$ is a function and $F$ is a set of functions with range $\dom{g}$, we write $g\cdot F := \{g\cdot f: f\in F\}$.
	
\section{Enumerated structures and Ramsey degrees}
\label{Sec:EnumStruc}

\begin{defin}
	\label{Def:RamseyDegree}
	Suppose that $\b{K}$ is an infinite first-order structure, that $\b{A}$ is a finite structure with $\emb(\b{A}, \b{K})\neq \emptyset$, and that $\ell < r < \omega$. We write 
	$$\b{K}\to (\b{K})^{\b{A}}_{r, \ell}$$
	if for any function $\chi\colon \emb(\b{A}, \b{K})\to r$, there is $\eta\in \emb(\b{K}, \b{K})$ so that 
	$$|\im{\chi\cdot \eta}| = \left|\chi[\eta\cdot \emb(\b{A}, \b{K})]\right|\leq \ell.$$ 
	The \emph{Ramsey degree of $\b{A}$ in $\b{K}$}, if it exists, is the least $\ell< \omega$ so that $\b{K}\to (\b{K})^{\b{A}}_{r, \ell}$ holds for every $r> \ell$, or equivalently for $r = \ell+1$.
\end{defin}
\vspace{2 mm}

When $\b{K} = \flim{\c{K}}$ for some \fr class $\c{K}$, one says that $\b{A}\in \c{K}$ has \emph{big Ramsey degree} $\ell$ exactly when $\b{A}$ has Ramsey degree $\ell$ in $\b{K}$. We say that $\c{K}$ has \emph{finite big Ramsey degrees} if every $\b{A}\in \c{K}$ has some finite big Ramsey degree. 

Some authors formulate the above definitions with respect to \emph{copies} rather than embeddings, where a copy is simply the image of an embedding. If $\b{A}$ has Ramsey degree $\ell$ in $\b{K}$ with respect to copies, then one can show that $\b{A}$ has Ramsey degree $\ell\cdot |\aut{\b{A}}|$ with respect to embeddings. So as far as showing that a \fr class has finite big Ramsey degrees, either definition is acceptable.

For the purposes of this paper, a middle ground between these two approaches will be convenient. 
\vspace{2 mm}   

\begin{defin}
	\label{Def:EnumeratedStructure}
	An \emph{enumerated structure} is simply a structure $\b{K}$ whose underlying set is the cardinal $|\b{K}|$. In this note, all enumerated structures will be countable. If $m < |\b{K}|$, we write $\b{K}_m := \b{K}\cap m$. 
	
	We let $<$ denote the usual order on $\omega$. If $\b{A}$ is another enumerated structure, then an \emph{ordered embedding} of $\b{A}$ into $\b{K}$ is simply an embedding of $\langle \b{A}, <\rangle$ into $\langle \b{K}, <\rangle$. We denote the collection of ordered embeddings of $\b{A}$ into $\b{K}$ by $\oemb(\b{A}, \b{K})$.
	
	The \emph{ordered} Ramsey degree of $\b{A}$ in $\b{K}$ is just the Ramsey degree of $\langle \b{A}, <\rangle$ in $\langle \b{K}, <\rangle$. 
\end{defin}
\vspace{2 mm}

Our primary reason for considering enumerated structures is that one can use properties of the enumeration to define unavoidable colorings of $\emb(\b{A}, \b{K})$. For example, keep in mind the very first example about coloring pairs of rationals from the introduction. In fact, what we will show in some sense is that these are the \emph{only} colorings one needs to watch out for. The notion of \emph{coding tree} developed in the next section elaborates on this idea.

For the final proposition of this section, the following notation will be helpful. Suppose $\b{A}$ is a structure, $S$ is a set, and $f\colon S\to \b{A}$ is an injection. Then $\b{A}{\cdot}f$ is the structure on $S$ making $f$ an embedding. If $a_0,...,a_{n-1}\in \b{A}$, then $\b{A}(a_0,...,a_{n-1})$ is the structure on $n$ so that $i\to a_i$ is an embedding. 

We also recall that a \fr class is a \emph{strong amalgamation} class if it satisfies the first two items from the definition of a free amalgamation class (introduction, before the statement of the main theorem).
\vspace{2 mm}

\begin{prop}
	\label{Prop:EnumRamseyDeg}
	Let $\c{K}$ be a \fr class, and suppose $\b{K} = \flim{\c{K}}$ is enumerated. Then if every enumerated $\b{A}\in \c{K}$ has finite ordered Ramsey degree in $\b{K}$, then $\c{K}$ has finite big Ramsey degrees. If $\c{K}$ is a strong amalgamation class, this is iff. 
\end{prop}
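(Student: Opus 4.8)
The plan is to relate colorings of plain embeddings $\emb(\b{A},\b{K})$ to colorings of ordered embeddings $\oemb(\b{A}',\b{K})$ for finitely many auxiliary enumerated structures $\b{A}'$, and to combine the finitely many degree bounds. First I would observe that for an enumerated structure $\b{A}$, an embedding $f\colon \b{A}\to \b{K}$ is determined by its image together with the induced ordering, so $\emb(\b{A},\b{K})$ decomposes as a finite disjoint union over the possible "order types": for each of the finitely many linear orders $\prec$ on the underlying set of $\b{A}$ that can be realized by an increasing enumeration of some copy of $\b{A}$ inside $\langle\b{K},<\rangle$, let $\b{A}_\prec$ be the enumerated structure isomorphic to $\b{A}$ whose enumeration is $\prec$-increasing. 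Then $\emb(\b{A},\b{K})$ is in a finite-to-one, surjective correspondence with $\bigsqcup_\prec \oemb(\b{A}_\prec,\b{K})$; more precisely each ordered embedding $g\in\oemb(\b{A}_\prec,\b{K})$ is itself an (unordered) embedding of $\b{A}$, and every $f\in\emb(\b{A},\b{K})$ arises this way from exactly one $\prec$ and one $g$ (here we use that $\b{A}$ is finite, hence has only finitely many underlying orders, all living in $\age{\b{K}}$ since $\b{K}$ is homogeneous — each finite substructure of $\b{K}$ equipped with any order embeds in $\langle\b{K},<\rangle$ by Fra\"iss\'e-type extension, as $\langle\b{K},<\rangle$ realizes all finite ordered substructures).

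Given a coloring $\chi\colon\emb(\b{A},\b{K})\to r$, restrict it along each of these bijections to get colorings $\chi_\prec\colon\oemb(\b{A}_\prec,\b{K})\to r$. Now apply the ordered Ramsey degree hypothesis successively: pick $\eta_1\in\emb(\b{K},\b{K})$ witnessing the bound for $\b{A}_{\prec_1}$, then compose with the enumerated-ordered version of $\eta_1$ acting on the next coloring, and iterate over the finitely many $\prec$. The usual subtlety is that $\eta\in\emb(\b{K},\b{K})$ need not be an \emph{ordered} embedding of $\langle\b{K},<\rangle$; this is where Definition~\ref{Def:EnumeratedStructure} and its ordered Ramsey degree are the right tool — I would work throughout with $\oemb(\b{K},\b{K})$, i.e. self-embeddings of $\langle\b{K},<\rangle$, noting that such $\eta$ preserves each order type $\prec$ (it maps $\b{A}_\prec$-ordered-embeddings to $\b{A}_\prec$-ordered-embeddings) and hence the composition argument goes through cleanly. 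After composing over all order types, the resulting $\eta\in\oemb(\b{K},\b{K})\subseteq\emb(\b{K},\b{K})$ collapses $\chi$ to at most $\sum_\prec t_{\b{A}_\prec}$ colors, which is finite; this gives the first implication, with $t_\b{A}\le\sum_\prec t_{\b{A}_\prec}$.

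For the converse under strong amalgamation, I would run the reduction in the other direction: given an enumerated $\b{A}\in\c{K}$ and a coloring $\chi\colon\oemb(\b{A},\b{K})\to r$, extend it to a coloring of all of $\emb(\b{A},\b{K})$ — for an arbitrary embedding $f$, look at the order type of $\im f$ and apply $\chi$ to the unique increasing enumeration, with a junk color on embeddings whose order type differs from that of $\b{A}$'s enumeration — and then invoke the finite big Ramsey degree of $\b{A}$ to find $\eta\in\emb(\b{K},\b{K})$ collapsing the color count. The point of strong amalgamation is that it lets us pass from an arbitrary $\eta\in\emb(\b{K},\b{K})$ to an \emph{ordered} one: by strong amalgamation and homogeneity, $\langle\b{K},<\rangle$ is itself a Fra\"iss\'e limit and, more to the point, inside the image $\eta[\b{K}]$ (a copy of $\b{K}$, arbitrarily ordered by the restriction of $<$) one can find, via a back-and-forth using strong amalgamation, a further copy of $\b{K}$ on which the induced order is again order-isomorphic to $\langle\b{K},<\rangle$; composing, one extracts $\eta'\in\oemb(\b{K},\b{K})$ from $\eta$ at the cost of only a bounded further increase in the color count (in fact no increase, since we are passing to a sub-copy). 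Thus $\b{A}$ has finite ordered Ramsey degree.

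The main obstacle I expect is precisely this last maneuver in the converse — reconciling unordered self-embeddings of $\b{K}$ with ordered ones — which is exactly why strong amalgamation is hypothesized there and why the forward direction needs no such assumption; making the "find an order-isomorphic sub-copy" step precise (a back-and-forth argument exploiting that under strong amalgamation every finite ordered structure over $\c{K}$ lies in $\age{\langle\b{K},<\rangle}$, so $\langle\b{K},<\rangle$ is the Fra\"iss\'e limit of the ordered expansion) is the one spot requiring genuine care rather than bookkeeping.
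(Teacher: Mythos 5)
Your proposal matches the paper's proof essentially exactly in both directions: decompose $\emb(\b{A},\b{K})$ as a disjoint union of $\oemb(\b{A}\sigma,\b{K})$ over bijections $\sigma$ (your order types $\prec$), iterate the ordered-degree hypothesis with $\eta_i\in\oemb(\b{K},\b{K})$ and sum; and for the converse, pad $\chi$ with a junk color, apply the big Ramsey degree to get $\eta\in\emb(\b{K},\b{K})$, then postcompose with some $\theta$ so that $\eta\cdot\theta\in\oemb(\b{K},\b{K})$. One parenthetical claim in your last paragraph is false and should be dropped: $\langle\b{K},<\rangle$ is \emph{not} the Fra\"iss\'e limit of the ordered expansion of $\c{K}$, since $<$ has order type $\omega$ and so $\langle\b{K},<\rangle$ cannot realize one-point extensions that ought to fall strictly between two existing points (it is not homogeneous as an ordered structure). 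This does not damage the argument: what strong amalgamation actually gives, and what your construction of the sub-copy really uses, is that every one-point extension over a finite substructure of $\b{K}$ is realized by \emph{infinitely many} points of $\b{K}$, so a one-sided ``forth'' construction (not a back-and-forth) lets you pick each new point with a sufficiently large $\eta$-image, producing the desired $\theta$.
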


\begin{proof}
	First assume that every enumerated $\b{A}\in \c{K}$ has finite ordered Ramsey degree in $\b{K}$. Fix some $\b{A}\in \c{K}$, and write $B = \{\sigma_i: i< n\}$ for the set of bijections from $|\b{A}|$ to $\b{A}$. Then we can write
	$$\emb(\b{A}, \b{K}) = \bigsqcup_{i < n} \oemb(\b{A}{\cdot}\sigma_i, \b{K}).$$
	For each $i < n$, let $\ell_i$ denote the ordered Ramsey degree of $\b{A}{\cdot}\sigma_i$ in $\b{K}$. If $\chi\colon \emb(\b{A}, \b{K})\to r$ is a coloring for some $r< \omega$, we can find $\eta_i\in \oemb(\b{K}, \b{K})$ so that $\oemb(\b{A}{\cdot}\sigma_i, \b{K})$ sees at most $\ell_i$ colors for the coloring $\chi\cdot \eta_0\cdots \eta_{i-1}$. Setting $\eta = \eta_0\circ \cdots \circ \eta_{n-1}$, then $|\chi\cdot \eta\cdot \emb(\b{A}, \b{K})|\leq \sum_{i< n} \ell_i$.
	
	We remark that if $\c{K}$ is a strong amalgamation class, one can show that the Ramsey degree of $\b{A}$ in $\b{K}$ is exactly $\sum_{i< n} \ell_i$.
	
	Now assume that $\c{K}$ is a strong amalgamation class with finite big Ramsey degrees. Let $\b{A}\in \c{K}$ be an enumerated structure with big Ramsey degree $\ell< \omega$, and let $\chi\colon \oemb(\b{A}, \b{K})\to r$ be a coloring for some $r< \omega$. Define a coloring $\xi\colon \emb(\b{A}, \b{K})\to r+1$ extending $\chi$ by setting $\xi(f) = r$ whenever $f\in \emb(\b{A}, \b{K})\setminus \oemb(\b{A}, \b{K})$. Find $\eta\in \emb(\b{K}, \b{K})$ with $|\im{\xi{\cdot}\eta}|\leq \ell$. Now since $\c{K}$ is a strong amalgamation class, we can find $\theta\in \emb(\b{K}, \b{K})$ so that $\eta{\cdot}\theta\in \oemb(\b{K}, \b{K})$. It then follows that $|\im{\chi{\cdot}(\eta{\cdot}\theta)}|\leq \ell$.
\end{proof}
	
\section{Coding trees}
\label{Sec:CodeTree}

Suppose that $\c{L} = \{U_0,...,U_{k-1}; R_0,...,R_{k-1}\}$ is a finite binary relational language, where the $U_i$ are unary and the $R_i$ are binary, throwing in extra symbols to ensure that there are $k$ of each. For notation, if $\b{A}$ is an $\c{L}$-structure and $a, b\in \b{A}$, we let $U^\b{A}(a) = i< k$ iff $i< k$ is unique so that $U^\b{A}_i(a)$ holds, and similarly for $R^\b{A}(a, b)$. This helps to reduce subscripts. By enlarging $\c{L}$ if needed, we may assume that the following all hold whenever $\b{A}$ is an $\c{L}$-structure.
\begin{itemize}
	\item 
	Each $R_i^\b{A}$ is non-reflexive.
	\item 
	$\b{A} = \bigsqcup_{i< k} U_i^\b{A}$.
	\item 
	$\b{A}^2\setminus \{(a, a): a\in \b{A}\} = \bigsqcup_{i< k} R_i^\b{A}$. Here $R_0$ will play the role of ``no relation." 
	\item 
	There is a map $\mathrm{Flip}\colon k\to k$ with $\mathrm{Flip}^2 = \mathrm{id}$ so that for any $a\neq b\in \b{A}$, we have $R^\b{A}(a, b) = i$ iff $R^\b{A}(b, a) = \mathrm{Flip}(i)$. We demand that $\mathrm{Flip}(0) = 0$.
\end{itemize} 
We remark that we almost never need to refer to $\mathrm{Flip}$ explicitly; this just ensures that when we are discussing an $\c{L}$-structure and we know $R^\b{A}(a, b)$, then there is no need to mention $R^\b{A}(b, a)$. As a rule of thumb, when working with an enumerated $\c{L}$-structure $\b{A}$ with $m< n< |\b{A}|$, we will define binary relations going \emph{down} the enumeration, i.e.\ we define $R^\b{A}(n, m)$, and then one knows $R^\b{A}(m, n)$ via $\mathrm{Flip}$. Since $R_0$ plays the role of no relation, this is also the sense in which we understand notions such as ``irreducible structure," ``free amalgam," etc.

We briefly discuss some terminology relating to the tree $k^{<\omega}$. To minimize superscripts, \textbf{we denote this tree by} $T$, and we set $T(n) = k^n$, $T({<}n) = k^{<n}$, etc.; when we do write $k^n$, we will mean as a number. If we wish to write out an element of $T$ explicitly, we use angle brackets, i.e.\ $\langle 0110\rangle\in T(4)$. If $s\in T$, then the \emph{height} of $s$ is the integer $\ell(s)$ with $s\in T(\ell(s))$. If $s, t\in T$, we say that $s$ is an \emph{initial segment} of $t$ or that $t$ \emph{extends} $s$ and write $s\sqsubseteq t$ if $\ell(s)\leq \ell(t)$ and $t|_{\ell(s)} = s$. The \emph{meet} of $s, t\in T$, denoted $s\wedge t$, is the longest common initial segment of $s$ and $t$. If $s\in T$ and $n\geq \ell(s)$, then $\succ(s, n) := \{t\in T(n): t\sqsupseteq s\}$, and $\Left(s, n)\in \succ(s, n)$ is the extension of $s$ to height $n$ by adding zeros to the end. If $S\subseteq T$ satisfies $\max\{\ell(s): s\in S\}\leq n$, then $\succ(S, n) := \bigcup \{\succ(s, n): s\in S\}$ and $\Left(S, n) = \{\Left(s, n): s\in S\}$. The set of \emph{immediate successors} of $s$ is the set $\is(s) := \succ(s, \ell(s)+1)$. If $i< k$, then $s^\frown i\in \is(s)$ denotes the immediate successor of $s$ formed by setting $s^\frown i(\ell(s)) = i$. We set $\is(S) := \bigcup \{\is(s): s\in S\}$. If $m< \omega$, then $\pi_m\colon T({\geq}m)\to T(m)$ will denote the restriction map.
\vspace{2 mm}

\begin{defin}
	\label{Def:CodingTree}
	A \emph{coding tree} $\ct := (c, u)$ is a pair of functions $c\colon n\to T({<}n)$ and $u\colon n\to k$ for some $n\leq \omega$, where $c(m)\in T(m)$ for each $m< n$. We call the image of $c$ the set of \emph{coding nodes} of $c$, and we say that the coding node $c(m)$ has \emph{unary} $i< k$ if $u(m) = i$. 
	
	Given $(c, u)$ a coding tree with domain $n$, we recover an $\c{L}$-structure $\b{X}(c, u)$ on $n$, where given $i < n$, we set $U^{\b{X}(c, u)}(i) = u(i)$, and given $i < j < n$, we declare that $R^{\b{X}(c, u)}(j, i) = c(j)(i)$ holds. Conversely, if $\b{A}$ is an $\c{L}$-structure on $n$, the coding tree $\ct^\b{A} := (c^\b{A}, u^\b{A})$ is defined by setting $u^\b{A}(i) = U^\b{A}(i)$ whenever $i< n$ and $c^\b{A}(j)(i) = R^\b{A}(j, i)$ whenever $i< j < n$.    
\end{defin}
\vspace{2 mm}

Soon, we will fix once and for all a free amalgamation class of $\c{L}$-structures and an enumerated \fr limit. We will want this enumerated \fr limit to have one extra property.
\vspace{2 mm}

\begin{defin}
	\label{Def:LeftDense}
	Suppose $\c{K}$ is a free amalgamation class of $\c{L}$-structures and that $\b{K}$ is an enumerated \fr limit of $\c{K}$. Then we say that $\b{K}$ is \emph{left dense} if for any $m< \omega$, whenever $\b{A}\in \c{K}$ has underlying set $m+1$ and $\b{A}_m = \b{K}_m$, then there is $n\geq m$ with $\b{K}(0,...,m-1, n) = \b{A}$ and with $R(n, r) = 0$ for every $r\in [m, n-1]$. We note that a left-dense \fr limit can easily be constructed inductively. 
	
	In terms of the coding tree $\ct^\b{K}$, this means that whenever $s\in T(m)$ is a node which can be extended to a coding node of unary $i< k$, then there is $n\geq m$ with $\Left(s, n) = c^\b{K}(n)$ and $u(n) = i$.
\end{defin} 

\textbf{We now fix} a \fr free amalgamation class $\c{K}$ and an enumerated \fr limit $\b{K}$ which is left dense. When referring to this fixed \fr limit $\b{K}$, we typically omit various superscripts, writing $\ct = \ct^\b{K} = (c, u)$, $U = U^\b{K}$, and $R = R^\b{K}$.

We now define the class of \emph{aged embeddings} between coding trees. To do this, we first define an \emph{embedding}, and an aged embedding will be an embedding satisfying extra properties. 
\vspace{2 mm}

\begin{defin}
	\label{Def:Embedding}
	Suppose $\b{A}$ is an enumerated structure with $\age{\b{A}}\subseteq \c{K}$. Then a map $f\colon T(<|\b{A}|)\to T$ is an \emph{embedding} of $\ct^\b{A}$ into $\ct$ if the following all hold:
	\begin{enumerate}
		\item 
		$f$ is an injection.
		\item 
		There is an increasing injection $\tilde{f}\colon |\b{A}|\to \omega$ so that $f[T(m)]\subseteq T(\tilde{f}(m))$ for each $m< |\b{A}|$.
		\item 
		$f$ preserves meets.
		\item 
		For each $s\in T(<|\b{A}|)$ and $i< k$, we have $f(s^\frown i) \sqsupseteq f(s)^\frown i$.
		\item 
		For each $m < |\b{A}|$, we have $f(c^\b{A}(m)) = c(\tilde{f}(m))$ and $u^\b{A}(m) = u(\tilde{f}(m))$.
	\end{enumerate}
	We write $\emb(\ct^\b{A}, \ct)$ for the collection of such embeddings. If $f\in \emb(\ct^\b{A}, \ct)$, then the induced map $\tilde{f}\colon |\b{A}|\to \omega$ is in $\emb(\b{A}, \b{K})$.
\end{defin}
\begin{rem}
	If $|\b{A}| = n$ and we delete item $5$, we recover the notion of a \emph{strong similarity} from $T(< n)$ into $T$.
\end{rem}
\vspace{2 mm} 

A major issue we will need to address is as follows. Suppose $\b{A}$ and $\b{B}$ are enumerated $\c{L}$-structures with $\b{A}$ an initial segment of $\b{B}$. Then $\ct^\b{A}$ is also an initial segment of $\ct^\b{B}$. Given $f\in \emb(\ct^\b{A}, \ct)$, does $f$ extend to $\ct^\b{B}$? 
\vspace{2 mm}

\begin{exa}
	\label{Exa:EmbeddingDoesntExtend}
	Let $\c{K}$ be the class of all finite triangle-free graphs. So $k = 2$, and $T$ is just the infinite binary tree. Suppose the enumerated \fr limit $\b{K}$ has $R(1, 0) = 1$, i.e.\ the vertices $1$ and $0$ are adjacent. So $c(0) = \langle\emptyset\rangle$, and $c(1) = \langle 1\rangle$. Note that there cannot be any coding nodes which extend $\langle 11\rangle$, since if $c(n)(0) = c(n)(1) = 1$, then $\{n, 1, 0\}$ is a triangle. 
	
	Suppose $\b{A} = \b{K}_1$ and $\b{B} = \b{K}_2$. Let $f\in \emb(\ct^\b{A}, \ct)$ be given by setting $f(\langle \emptyset\rangle) = \langle 1\rangle$. Then $f$ does not extend to $\ct^\b{B}$, since $f(\langle 1\rangle)$ must be a coding node which extends $\langle 11\rangle$, which is impossible by the discussion above.  
\end{exa}
\vspace{2 mm}

To avoid this problem, we enrich the structure of $\ct$, adding local information at each level describing the possible configurations of coding nodes above. We will then consider the ``aged embeddings" which respect this local information.
\vspace{2 mm} 

\begin{defin}\mbox{}
	\label{Def:AgedSetsEmbeddings}
	\vspace{-3 mm}
	
	\begin{enumerate}
		\item 
		Let $X$ be a finite set. An \emph{$X$-labeled $\c{L}$-structure} is an $\c{L}$-structure $\b{B}$ equipped with a function $\phi\colon \b{B}\to X$. We can view $\phi$ as an expansion of $\b{B}$ by unary predicates coming from $X$. As a rule of thumb, we assume that the underlying set of an $X$-labeled $\c{L}$-structure is disjoint from $\omega$.
		\item 
		An \emph{aged set} is a finite set $X$ equipped with a class $\c{C}_X$ of finite $X$-labeled $\c{L}$-structures. We allow $\c{C}_X = \emptyset$.
		\item 
		Given $(X, \c{C}_X)$ and $(Y, \c{C}_Y)$ aged sets, an \emph{age map} from $X$ to $Y$ is an injection $f\colon X\to Y$ with the property that $(\b{B}, \phi)\in \c{C}_X$ iff $(\b{B}, f\circ \phi)\in \c{C}_Y$. 
		\item 
		Suppose $\b{A}$ is an enumerated $\c{L}$-structure. Let $m\leq |\b{A}|$, $m< \omega$, and suppose $(\b{B}, \phi)$ is a $T(m)$-labeled $\c{L}$-structure, where we assume that $\b{B}\cap \omega = \emptyset$. Then $\b{B}[\phi, \b{A}]$ is the $\c{L}$-structure on $\b{B}\cup \b{A}_m$, with $\b{B}$ and $\b{A}_m$ as induced substructures (thus determining the unary predicates), and for $i< m$ and $b\in \b{B}$, we have $R^{\b{B}[\phi, \b{A}]}(b, i) = \phi(b)(i)$.
		
		When $\b{A} = \b{K}$, we omit it from the notation, simply writing $\b{B}[\phi]$.
		\item 
		Suppose $\b{A}$ is an enumerated structure with $\age{\b{A}}\subseteq \c{K}$. For $m\leq |\b{A}|$, $\c{C}^\b{A}(m)$ is the class of $T(m)$-labeled $\c{L}$-structures $(\b{B}, \phi)$ so that $\b{B}[\phi, \b{A}]\in \c{K}$. If $m$ is understood, we omit it. If $S\subseteq T(m)$, then we set $\c{C}^\b{A}(S) = \{(\b{B}, \phi)\in \c{C}^\b{A}(m): \im{\phi}\subseteq S\}$. 
		
		When $\b{A} = \b{K}$, we omit the superscript, simply writing $\c{C}(m)$ or $\c{C}$. We equip $S\subseteq T(m)$ with the class $\c{C}(S)$ unless explicitly mentioned otherwise. For example, if we refer to an age map $f\colon (T(m), \c{C}^\b{A})\to T(n)$, the range is equipped with the class $\c{C}(n)$.
		\item 
		Suppose $\b{A}$ is an enumerated structure with $\age{\b{A}}\subseteq \c{K}$. Then $f\in \emb(\ct^\b{A}, \ct)$ is an \emph{aged embedding} if for each $m< |\b{A}|$, the map $f\colon (T(m), \c{C}^\b{A})\to T(\tilde{f}(m))$ is an age map. Write $\aemb(\ct^\b{A}, \ct)$ for the set of aged embeddings of $\ct^\b{A}$ into $\ct$.	
	\end{enumerate}
\end{defin}
\vspace{2 mm}

\begin{exa}
	\label{Exa:3FreeAges}
	We revisit Example~\ref{Exa:EmbeddingDoesntExtend}, so once again, $\c{K}$ is the class of finite triangle-free graphs, and $\b{K}$ is an enumerated \fr limit with the property that the vertices $0$ and $1$ are adjacent. Then we can completely describe the classes $\c{C}(0)$, $\c{C}(1)$, and $\c{C}(2)$. The class $\c{C}(0)$ is simply the class of $T(0)$-labeled finite triangle-free graphs; since $|T(0)| = 1$, we can just identify this class with $\c{K}$. The class $\c{C}(1)$ is the class of $T(1)$-labeled finite triangle-free graphs $(\b{B}, \phi)$ such that whenever $a\neq b\in \b{B}$ have $\phi(a) = \phi(b) = \langle 1\rangle$, then $R^\b{B}(a, b) = 0$. The class $\c{C}(2)$  is the class of $T(2)$-labeled finite triangle free graphs $(\b{B}, \phi)$ so that all of the following hold.
	\begin{enumerate}
		\item 
		No $b\in \b{B}$ has $\phi(b) = \langle 11\rangle$.
		\item
		Whenever $a\neq b\in \b{B}$ and $\phi(a) = \phi(b)$, then either $\phi(a) = \phi(b) = \langle 00\rangle$ or $R^\b{B}(a, b) = 0$.
	\end{enumerate}
	Let $\b{A} = \b{K}_1$ and $f\in \emb(\ct^\b{A}, \ct)$ be as in Example~\ref{Exa:EmbeddingDoesntExtend}. Then $f\not\in \aemb(\ct^\b{A}, \ct)$; if $(\b{C}, \phi)\in \c{C}(0)$ is such that $\b{C}$ has any edges, then $(\b{C}, f\circ \phi)\not\in \c{C}(1)$.
\end{exa}
\vspace{2 mm}

The remainder of the section is spent proving various properties about age maps and aged embeddings. In particular, we will see that aged embeddings exist and can always be extended.
\vspace{2 mm}

\begin{defin}
	\label{Def:PrimeMap}
	Suppose $S\subseteq T$, and let $f\colon S\to T$ be any map. Then $f'\colon \is(S)\to T$ is the map given by $f'(s^\frown i) = f(s)^\frown i$.
\end{defin}

\begin{rem}
	While very simple, the above definition is one of the main ways in which we use the assumption that $\c{L}$ is a binary language. If $\c{L}$ were a higher arity language, then we would need to use a coding tree with more and more rapid branching rather than constant branching. In turn, when attempting a na\"ive generalization of these proof methods to higher arities, one has several possible choices of map analogous to $f'$ defined above. Unfortunately, these different choices can be used to create counterexample colorings, showing that the na\"ive generalization of Theorem 3.1 does not hold in general.
\end{rem}
\vspace{2 mm}

\begin{prop}
	\label{Prop:PrimeAgeMap}
	Suppose $\b{A}$ is an enumerated structure with $\age{\b{A}}\subseteq \c{K}$, and let $f\in \aemb(\ct^\b{A}|_m, \ct)$ for some $m\leq  |\b{A}|$, $m< \omega$. Then $f'\colon (T(m), \c{C}^\b{A})\to T(\tilde{f}(m-1)+1)$ is an age map.
\end{prop}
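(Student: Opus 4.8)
The plan is to verify the two conditions defining an age map. Since $\tilde f(m-1)$ must make sense, we assume $m\geq 1$, and we write $n:=\tilde f(m-1)+1$. That $f'$ (restricted to $T(m)$) is an injection into $T(n)$ is immediate from Definitions~\ref{Def:Embedding} and~\ref{Def:PrimeMap}: for $s\in T(m-1)$ and $i<k$ we have $f'(s^\frown i)=f(s)^\frown i$ with $f(s)\in T(\tilde f(m-1))$ by item~(2) of Definition~\ref{Def:Embedding}, so $f'(s^\frown i)\in T(n)$, and if $f(s)^\frown i=f(s')^\frown{i'}$ then $i=i'$ and $f(s)=f(s')$, hence $s=s'$ by item~(1).

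For the class-preservation condition, the idea is that passing from level $m-1$ to level $m$ merely appends the coordinate $m-1$, and this coordinate can be absorbed into the labeled structure, reducing matters to the hypothesis that $f$ is already an age map at level $m-1$. Fix a finite $T(m)$-labeled $\c{L}$-structure $(\b{B},\phi)$; we must show $\b{B}[\phi,\b{A}]\in\c{K}$ iff $\b{B}[f'\circ\phi]\in\c{K}$. Choose a fresh point $b_*\notin\b{B}\cup\omega$, let $\b{B}'$ be the $\c{L}$-structure on $\b{B}\cup\{b_*\}$ that restricts to $\b{B}$ on $\b{B}$, in which $b_*$ has unary type $u^\b{A}(m-1)$, and in which $R^{\b{B}'}(b,b_*)=\phi(b)(m-1)$ for each $b\in\b{B}$; and define $\psi\colon\b{B}'\to T(m-1)$ by $\psi(b)=\pi_{m-1}(\phi(b))$ for $b\in\b{B}$ and $\psi(b_*)=c^\b{A}(m-1)$. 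The new point $b_*$ is meant to play the role of the element $m-1\in\b{A}$ on the left-hand side and of the coding node $\tilde f(m-1)\in\b{K}$ on the right-hand side.

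I would then check two isomorphisms of $\c{L}$-structures. First: the bijection equal to the identity on $\b{B}\cup\b{A}_{m-1}$ and sending $b_*\mapsto m-1$ is an isomorphism $\b{B}'[\psi,\b{A}]\to\b{B}[\phi,\b{A}]$; the only verifications that are not mere bookkeeping concern $b_*$, and they hold because $\psi(b_*)=c^\b{A}(m-1)$ records exactly how $m-1$ relates to $\b{A}_{m-1}$ inside $\b{A}$ and because $R^{\b{B}[\phi,\b{A}]}(m-1,b)=\phi(b)(m-1)$ by definition of $\b{B}[\phi,\b{A}]$. Second: the bijection equal to the identity on $\b{B}\cup\b{K}_{\tilde f(m-1)}$ and sending $b_*\mapsto\tilde f(m-1)$ is an isomorphism $\b{B}'[f\circ\psi]\to\b{B}[f'\circ\phi]$; the essential input here is item~(5) of Definition~\ref{Def:Embedding}, which gives $c(\tilde f(m-1))=f(c^\b{A}(m-1))=(f\circ\psi)(b_*)$ and $u(\tilde f(m-1))=u^\b{A}(m-1)$, so that the relations of $b_*$ to $\b{K}_{\tilde f(m-1)}$ and its unary type agree with those of the genuine element $\tilde f(m-1)\in\b{K}$, while the relations of $b_*$ to $\b{B}$ and of the other points of $\b{B}$ to $\b{K}_{\tilde f(m-1)}$ match upon unwinding $f'(\phi(b))=f(\pi_{m-1}(\phi(b)))^\frown\phi(b)(m-1)$ together with $f(\pi_{m-1}(\phi(b)))\in T(\tilde f(m-1))$. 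Given these two isomorphisms, and using that $f$ is an aged embedding so that $f\colon(T(m-1),\c{C}^\b{A})\to T(\tilde f(m-1))$ is an age map, we apply the latter to $(\b{B}',\psi)$ to conclude: $\b{B}[\phi,\b{A}]\in\c{K}$ iff $\b{B}'[\psi,\b{A}]\in\c{K}$ (first isomorphism) iff $\b{B}'[f\circ\psi]\in\c{K}$ (age map at level $m-1$) iff $\b{B}[f'\circ\phi]\in\c{K}$ (second isomorphism), which is exactly the age-map condition for $f'$.

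The real content lies in choosing $(\b{B}',\psi)$ so that the single adjoined vertex $b_*$ faithfully encodes the new vertex on \emph{both} sides simultaneously; once this is arranged the two isomorphism checks are mechanical, the load-bearing fact being item~(5) of Definition~\ref{Def:Embedding}, which is precisely what ties the relations of the coding node $\tilde f(m-1)$ over $\b{K}_{\tilde f(m-1)}$ to the $f$-image of the relations of $m-1$ over $\b{A}_{m-1}$. It is worth noting that the argument uses nothing about $\b{K}$ beyond $f$ being an aged embedding: neither left density nor free amalgamation is invoked.
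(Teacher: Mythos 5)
Your proposal is correct and uses essentially the same construction as the paper: adjoin a fresh vertex, label it by $c^\b{A}(m-1)$, and reduce to the fact that $f$ is an age map on $T(m-1)$, with item (5) of Definition~\ref{Def:Embedding} doing the real work in matching the two isomorphisms. The only stylistic difference is that the paper treats the "easy" direction ($\b{B}[f'\circ\phi]\in\c{K}\Rightarrow\b{B}[\phi,\b{A}]\in\c{K}$) separately by noting that $\b{B}[\phi,\b{A}]$ is isomorphic to an induced substructure of $\b{B}[f'\circ\phi]$ and invoking heredity of $\c{K}$, whereas you observe that the age-map condition at level $m-1$ is already a biconditional, so the same adjoined-vertex argument delivers both directions at once; this is a modest but genuine simplification, and your closing remark that neither left density nor free amalgamation is used is accurate and worth keeping in mind.
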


\begin{proof}
	In one direction, suppose $(\b{B}, \phi)$ is a $T(m)$-labeled $\c{L}$-structure with $\b{B}[f'\circ \phi]\in \c{K}$. To see that $\b{B}[\phi, \b{A}]\in \c{K}$, we observe that it is isomorphic to the induced substructure of $\b{B}[f'\circ \phi]$ on the subset $\b{B}\cup \{\tilde{f}(0),...,\tilde{f}(m-1)\}$.
	
	For the more difficult direction, suppose $(\b{B}, \phi)\in \c{C}^\b{A}(m)$, i.e.\ that $\b{B}[\phi, \b{A}]\in \c{K}$. Let $x\not\in \omega\cup \b{B}$ be some new vertex, and let $\b{C}$ be the structure on $\b{B}\cup \{x\}$ which is isomorphic to $\b{B}\cup \{m-1\}$ viewed as an induced substructure of $\b{B}[\phi, \b{A}]$. Let $\psi\colon \b{C}\to T(m-1)$ be the labeling with $\psi(b) = \pi_{m-1}\circ \phi(b)$ for $b\in \b{B}$, and $\psi(x) = c^\b{A}(m-1)$. Then $\b{C}[\psi, \b{A}] \cong \b{B}[\phi, \b{A}]$, so is in $\c{K}$. Hence $\b{C}[f\circ \psi]\in \c{K}$, and we show that this structure is isomorphic to $\b{B}[f'\circ \phi]$. 
	
	To see this, we first note that these structures are identical on their common subset $\b{B}\cup \b{K}_{\tilde{f}(m-1)}$. Second, we have $f(c^\b{A}(m-1)) = c(\tilde{f}(m-1))$ and $U^\b{C}(x) = U(m-1)$, implying that as an induced substructure of $\b{C}[f\circ \psi]$, we have $\b{K}_{\tilde{f}(m-1)}\cup \{x\}\cong \b{K}_{\tilde{f}(m-1)+1}$. Second, we note that for each $b\in \b{B}$, we have
	\begin{align*}
	R^{\b{C}[f\circ \psi]}(b, x) &= R^\b{C}(b, x) \\
	&= R^{\b{B}[\phi, \b{A}]}(b, m-1) \\
	&= R^{\b{B}[f'\circ \phi]}(b, \tilde{f}(m-1)).
	\end{align*} 
	Hence identifying the points $x\in \b{C}[f\circ \psi]$ and $\tilde{f}(m-1)\in \b{B}[f'\circ \phi]$ yields the desired isomorphism.
\end{proof}
\vspace{2 mm}

The next proposition, though easy, is of vital importance. It gives us a safe way in which to extend partial constructions; this safe extension is to move up and to the left. In particular, this proposition crucially uses the fact that $\c{K}$ is a free amalgamation class. If $\c{K}$ were just a strong amalgamation class, the potential absence of a universally safe direction to extend age maps can break a key part of the proof of Theorem~\ref{Thm:HL}, namely Lemma~\ref{Lem:LiftMagicConditionSpl}.  
\vspace{2 mm}

\begin{prop}
	\label{Prop:ExtendAgeMap}
	Suppose $S\subseteq T(m)$ (possibly empty), $n \geq m$, and $\gamma\colon S\to T(n)$ is an age map so that $\gamma(s)\sqsupseteq s$ for each $s\in S$. Extend the domain of $\gamma$ to all of $T(m)$ by setting $\gamma(t) = \Left(t, n)$ for $t\in T(m)\setminus S$. Then $\gamma\colon T(m)\to T(n)$ is an age map.
\end{prop}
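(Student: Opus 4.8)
The plan is to verify the two defining conditions of an age map $(T(m),\c{C}(m))\to(T(n),\c{C}(n))$ for the extended $\gamma$: injectivity, and that $\b{B}[\phi]\in\c{K}$ iff $\b{B}[\gamma\circ\phi]\in\c{K}$ for every $T(m)$-labeled $\c{L}$-structure $(\b{B},\phi)$. The observation I would record first is that $\gamma(s)\sqsupseteq s$ holds for \emph{every} $s\in T(m)$, not merely for $s\in S$: when $t\in T(m)\setminus S$ this is because $\Left(t,n)$ is obtained from $t$ by appending zeros. Since $\ell(s)=m$ this forces $\gamma(s)|_m=s$, so $s\mapsto\gamma(s)|_m$ is the identity, whence $\gamma$ is injective. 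The same remark also disposes of one direction of the biconditional, exactly as in the easy direction of Proposition~\ref{Prop:PrimeAgeMap}: because $\gamma(\phi(b))|_m=\phi(b)$ for each $b\in\b{B}$, the induced substructure of $\b{B}[\gamma\circ\phi]$ on $\b{B}\cup\b{K}_m$ is literally $\b{B}[\phi]$, so if $\b{B}[\gamma\circ\phi]\in\c{K}$ then $\b{B}[\phi]\in\c{K}$ by heredity.

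The substance is the converse, which I would prove using free amalgamation. Assume $\b{B}[\phi]\in\c{K}$, and partition $\b{B}=B_0\sqcup B_1$ with $B_1=\phi^{-1}(S)$ and $B_0=\phi^{-1}(T(m)\setminus S)$; let $\b{B}_1$ be the induced substructure on $B_1$ and $\phi_1=\phi|_{B_1}\colon B_1\to S$. Then $\b{B}_1[\phi_1]\in\c{K}$, being an induced substructure of $\b{B}[\phi]$, and since $\phi_1$ takes values in $S$ and $\gamma|_S$ is an age map, $\b{B}_1[\gamma\circ\phi_1]\in\c{K}$. Using $\gamma(s)|_m=s$ once more, one checks that $\b{B}_1[\phi_1]$ is a common induced substructure of $\b{B}_1[\gamma\circ\phi_1]$ (on $B_1\cup\b{K}_n$) and of $\b{B}[\phi]$ (on $B_0\cup B_1\cup\b{K}_m$), so these two may be freely amalgamated over it; call the result $\b{D}\in\c{K}$, on $B_0\cup B_1\cup\b{K}_n$. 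It retains $\b{B}[\phi]$ and $\b{B}_1[\gamma\circ\phi_1]$ as induced substructures, and by the third clause in the definition of free amalgamation it carries no non-trivial relation between $B_0$ and $\b{K}_n\setminus\b{K}_m$.

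It then remains to check pair by pair that $\b{D}$ is exactly $\b{B}[\gamma\circ\phi]$: relations within $\b{B}$ and within $\b{K}_n$ are inherited from $\b{B}$ and from $\b{K}$ on both sides; relations between $B_1$ and $\b{K}_n$ agree because $\phi$ and $\phi_1$ coincide on $B_1$; relations between $B_0$ and $\b{K}_m$ agree because $\gamma(\phi(b))|_m=\phi(b)$; and there is no relation between $B_0$ and $\b{K}_n\setminus\b{K}_m$ on either side, since for $b\in B_0$ the node $\gamma(\phi(b))=\Left(\phi(b),n)$ is zero above level $m$. Hence $\b{B}[\gamma\circ\phi]=\b{D}\in\c{K}$, and $\gamma$ is an age map. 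I expect the only real obstacle to be this final bookkeeping: setting the free amalgam up over the correct common substructure, and confirming that the ``all zeros above level $m$'' feature of the $\Left$-extension matches precisely the absence of cross relations in a free amalgam, so that $\b{D}$ is literally $\b{B}[\gamma\circ\phi]$ and not merely a structure containing it.
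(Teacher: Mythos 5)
Your proof is correct and takes essentially the same route as the paper's: both directions rest on the same observations (that $\gamma(s)|_m = s$ for all $s$, so $\b{B}[\phi]$ sits inside $\b{B}[\gamma\circ\phi]$, and that the $\Left$-extension produces no relations between $\phi^{-1}(T(m)\setminus S)$ and $\{m,\dots,n-1\}$), with the hard direction being a free amalgamation of $\b{B}[\phi]$ and $\phi^{-1}(S)[\gamma\circ\phi]$ over $\phi^{-1}(S)\cup\b{K}_m$. You are a bit more explicit than the paper about constructing the amalgam $\b{D}$ and verifying $\b{D}=\b{B}[\gamma\circ\phi]$ pair by pair, and you record the injectivity of $\gamma$, which the paper leaves implicit; these are welcome clarifications but not a different argument.
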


\begin{proof}
	In one direction, suppose $(\b{B}, \phi)$ is a $T(m)$-labeled $\c{L}$-structure with $\b{B}[\gamma\circ \phi]\in \c{K}$. Then $\b{B}[\phi]$ is an induced substructure of $\b{B}[\gamma\circ \phi]$, so is also in $\c{K}$. 
	
	In the other direction, suppose $(\b{B}, \phi)\in \c{C}(m)$. Viewing $\phi^{-1}(S)$ as an induced substructure of $\b{B}$, we have $(\phi^{-1}(S))[\gamma\circ \phi] \in \c{K}$. Then we note as before that $\b{B}[\phi]$ is an induced substructure of $\b{B}[\gamma\circ \phi]$. Finally, we note that in $\b{B}[\gamma\circ \phi]$, there are no relations between $\phi^{-1}(T(m)\setminus S)$ and $\{m,...,n-1\}$. Hence $\b{B}[\gamma\circ \phi]$ is a free amalgam of structures in $\c{K}$, so also in $\c{K}$.
\end{proof}
\vspace{2 mm}

\begin{theorem}
	\label{Thm:ExistsAgedEmbs}
	Suppose $\b{A}$ is an enumerated structure with $\age{\b{A}}\subseteq \c{K}$, and fix $m< |\b{A}|$. If $f\in \aemb(\ct^\b{A}|_m, \ct)$ and $\gamma\colon f'[T(m)]\to T(n)$ is an age map with $\gamma(f'(s))\sqsupseteq f'(s)$ for each $s\in T(m)$, then there is $g\in \aemb(\ct^\b{A}|_{m+1}, \ct)$ with $g|_{T(<m)} = f$ and $g(t)\sqsupseteq \gamma(f'(t))$ for each $t\in T(m)$. 
	
	In particular, there is an aged embedding of $\ct^\b{A}$ into $\ct$.
\end{theorem}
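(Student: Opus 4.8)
The plan is to define $g$ by keeping $g = f$ on $T(<m)$ and, on the new top level, setting $g(t) = \Left(\gamma(f'(t)), N)$ for $t \in T(m)$, where $N$ is a height to be chosen; equivalently $g|_{T(m)} = \Left(\cdot, N) \circ \gamma \circ f'$, regarding $\gamma \circ f'$ as a map $T(m) \to T(n)$ and $\Left(\cdot, N)$ as a map $T(n) \to T(N)$. Every clause in the definition of an aged embedding refers either to a single level (conditions 2, 5, and agedness) or to a single pair of nodes (conditions 1, 3, 4), so once $N$ is chosen well the rest is bookkeeping. The two points needing care are choosing $N$ so that the new coding node $c^\b{A}(m)$ is sent to a genuine coding node of $\ct$ of type $u^\b{A}(m)$, and checking that leftmost completion is compatible with meets and the age structure.

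To choose $N$: first note that $\gamma \circ f'$ is well defined on $T(m)$ and is an age map $(T(m), \c{C}^\b{A}) \to T(n)$, since $f' \colon (T(m), \c{C}^\b{A}) \to T(\tilde f(m-1)+1)$ is an age map by Proposition~\ref{Prop:PrimeAgeMap} and $\gamma$ is an age map on $f'[T(m)]$ by hypothesis. Consider the one-point $T(m)$-labeled structure $(\{v\}, \phi)$ with $\phi(v) = c^\b{A}(m)$ and with $v$ of unary type $u^\b{A}(m)$. Since $c^\b{A}(m)(r) = R^\b{A}(r, m)$ for $r < m$, the structure $\{v\}[\phi, \b{A}]$ is isomorphic to $\b{A}_{m+1}$ via $v \mapsto m$, and $\b{A}_{m+1} \in \age{\b{A}} \subseteq \c{K}$; hence $(\{v\}, \phi) \in \c{C}^\b{A}(m)$. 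Applying the age map $\gamma \circ f'$ gives $\{v\}[(\gamma \circ f') \circ \phi] \in \c{K}$, and this is precisely the structure obtained from $\b{K}_n$ by adjoining one point of unary type $u^\b{A}(m)$ whose relations to $\b{K}_n$ are coded by $\gamma(f'(c^\b{A}(m)))$. Feeding this structure into the left density of $\b{K}$ (in its coding-tree formulation) produces $N \geq n$ with $c(N) = \Left(\gamma(f'(c^\b{A}(m))), N)$ and $u(N) = u^\b{A}(m)$, so that indeed $g(c^\b{A}(m)) = c(N)$.

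The verification that $g \in \aemb(\ct^\b{A}|_{m+1}, \ct)$, with $\tilde g$ extending $\tilde f$ by $\tilde g(m) = N$, is then routine. Since $N \geq n \geq \tilde f(m-1)+1$, the map $\tilde g$ is increasing and the level condition holds; injectivity holds because $T(<m)$ and $T(m)$ map to disjoint height ranges while $\Left(\cdot, N) \circ \gamma \circ f'$ is injective; condition 4 is inherited from $f$ below level $m-1$ and, for $s \in T(m-1)$, reduces to $g(s^\frown i) \sqsupseteq \gamma(f'(s^\frown i)) \sqsupseteq f'(s^\frown i) = f(s)^\frown i = g(s)^\frown i$; condition 5 at level $m$ is the choice of $N$; and agedness at level $m$ holds because $g|_{T(m)}$ is a composite of the age maps $f'$, $\gamma$, and $\Left(\cdot, N) \colon T(n) \to T(N)$, the last being an age map by Proposition~\ref{Prop:ExtendAgeMap} applied with $S = \emptyset$. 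Meet preservation (condition 3) is the one step needing a brief case analysis: writing distinct nodes of $T(m)$ as $u^\frown i$ and $u'^\frown i'$, one has $g(u^\frown i) \sqsupseteq f(u)$ and $g(u'^\frown i') \sqsupseteq f(u')$, and since these images have height $N$ while all $f$-images of nodes below level $m$ have height at most $\tilde f(m-1) < N$, the meet $g(u^\frown i) \wedge g(u'^\frown i')$ is forced to equal $f(u \wedge u')$ when $u \neq u'$ (using meet preservation of $f$) and $f(u)$ when $u = u'$; either way this equals $g$ of the meet, and the cases in which one of the two nodes lies in $T(<m)$ are handled the same way using monotonicity of $f$. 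For the final assertion, one iterates: starting from the empty map in $\aemb(\ct^\b{A}|_0, \ct)$ and, at each stage, feeding in for $\gamma$ the inclusion of $f_m'[T(m)]$ into $T(\tilde f_m(m-1)+1)$ (an age map trivially satisfying $\gamma(f_m'(s)) \sqsupseteq f_m'(s)$), one obtains a coherent chain $f_0 \subseteq f_1 \subseteq \cdots$ whose union is an aged embedding of $\ct^\b{A}$ into $\ct$.

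The main obstacle is not conceptual: the real work in the inductive step is recognizing that left density is exactly the device that places the new coding node $c^\b{A}(m)$ onto a coding node of $\ct$, once its $1$-type over $\b{A}_m$ has been transported along $\gamma \circ f'$. The most error-prone part is the height-and-meet bookkeeping for condition 3; everything else is an assembly of Propositions~\ref{Prop:PrimeAgeMap} and~\ref{Prop:ExtendAgeMap}.
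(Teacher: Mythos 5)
Your proposal is correct and follows essentially the same route as the paper: define $g$ on $T(m)$ as $\Left(\gamma(f'(\cdot)), N)$, choose $N$ by left density after observing via Proposition~\ref{Prop:PrimeAgeMap} that $\gamma\circ f'$ transports the $1$-type of the coding node into $\c{K}$, and invoke Proposition~\ref{Prop:ExtendAgeMap} (with $S=\emptyset$) to see that the leftmost-completion map is an age map, so $g|_{T(m)}$ is a composite of age maps. The paper states the verification of the embedding axioms and the base case $m=0$ more tersely, but the underlying argument is the one you give.
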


\begin{proof}
	Suppose $s = c^\b{A}(m)$. Find $r\geq n$ so that $\Left(\gamma(f'(s)), r)$ is a coding node of type $u^\b{A}(m)$; this is possible by Proposition~\ref{Prop:PrimeAgeMap} and our assumption that $\b{K}$ is left dense. We then define $g$ on $T(m)$ by setting $g(t) = \Left(\gamma(f'(t)), r)$ for each $t\in T(m)$. By Propositions \ref{Prop:PrimeAgeMap} and \ref{Prop:ExtendAgeMap}, $g\colon T(m)\to T(r)$ is an age map, hence $g$ is an aged embedding as desired.
	
	For the last statement, we use left density to find some $\ell< \omega$ so that $0^\ell = c(\ell)$ and $u(\ell) = u^\b{A}(0)$. Then by setting $f(\emptyset) = 0^\ell$, we obtain a member of $\aemb(\ct^\b{A}|_1, \ct)$, allowing us to start the induction.
\end{proof}

\section{Ramsey theorems for aged embeddings}
\label{Sec:RamseyAE}

This section proves analogs of the Halpern--L\"auchli and Milliken theorems (Theorems \ref{Thm:HL} and \ref{Thm:Milliken}, respectively) for coding trees with aged embeddings. The proof of Theorem~\ref{Thm:HL} takes up most of the section, and uses ideas from forcing. The reader might want to take a look at the appendix on forcing for the conventions and notation used in the proof. In particular, the notion of ``name for an ultrafilter" that we use (Definition~\ref{Def:ForcingFilterNames}) is more restrictive, but better suited for our purposes and for readers less familiar with forcing.
\vspace{2 mm}
	
\begin{theorem}
	\label{Thm:HL}
	Suppose $\b{A}$ is an enumerated structure with $\age{\b{A}}\subseteq \c{K}$, and fix $m< |\b{A}|$. Suppose $f\in \aemb(\ct^\b{A}|_m, \ct)$, and let 
	$$F := \{g\in \aemb(\ct^\b{A}|_{m+1}, \ct): g|_{T(<m)} = f\}.$$
	Let $\chi\colon F\to \im{\chi}$ be a finite coloring. Then there is $h\in \aemb(\ct, \ct)$ with $h|_{T(\leq\tilde{f}(m-1))} = \mathrm{id}$ and $h\circ F$ monochromatic for $\chi$.
\end{theorem}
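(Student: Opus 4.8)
The plan is to prove this by a forcing argument modeled on Harrington's proof of the classical Halpern–Läuchli theorem, adapted to the coding-tree setting. First I would identify the relevant combinatorial data: the set $F$ of aged embeddings $g$ extending $f$ is controlled, via $g\mapsto g'$ and the structure on $T(m)$, by finitely many "age maps" from $T(m)$ into levels of $T$; roughly, choosing $g\in F$ amounts to choosing, for each node $t\in T(m)$, an extension of $f'(t) = f(t|_{m-1})^\frown t(m-1)$ to some common height $r$, subject to the age-map constraint and the requirement that $\Left(\gamma(f'(s)),r)$ become a coding node of the correct type $u^{\b A}(m)$ (this is where left density and Proposition \ref{Prop:PrimeAgeMap} enter). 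So a coloring $\chi$ of $F$ is essentially a coloring of such level-tuples.

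Next I would set up the forcing. The conditions are finite approximations specifying, for each $t\in T(m)$, a node of $T$ extending $f'(t)$, together with a common "working height" $\delta$; the generic object produces, for each node in the $m$-th level, an infinite branch, and these branches are amalgamated at a single large level to produce candidate members of $F$. One then argues that in the forcing extension there is, by the pigeonhole for the generic branches, a fixed color $\varepsilon\in 2$ and (using the Erdős–Rado theorem applied to a coloring derived from $\chi$ and the names for the generic branches) a single node $u^*$ in the product such that densely many conditions below it decide the color to be $\varepsilon$. The Erdős–Rado step is what lets one pass from "for each finite configuration there is a good extension" to "there is a uniform infinite subtree on which every configuration is good," and it is applied to an auxiliary coloring of increasing tuples of ordinals coding decisions made by conditions — this is the standard mechanism and I would follow the versions in \cite{FarTod} or \cite{DobForcing} (or the paper's own appendix).

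The heart of the argument, and the step I expect to be the main obstacle, is the \textbf{fusion/combinatorial extraction}: from the genericity-plus-Erdős–Rado information (which a priori lives in the forcing extension) one must build, back in the ground model, an actual aged embedding $h\in\aemb(\ct,\ct)$ with $h|_{T(\leq\tilde f(m-1))}=\mathrm{id}$ and $\chi$ constant on $h\circ F$. This requires simultaneously: (i) fixing the bottom part below level $\tilde f(m-1)$; (ii) iterating the "one-color-densely" fact along all of $\omega$ while always staying inside the age constraints (invoking Propositions \ref{Prop:PrimeAgeMap}, \ref{Prop:ExtendAgeMap} and Theorem \ref{Thm:ExistsAgedEmbs} at each step to re-extend to a genuine aged embedding, in particular re-inserting coding nodes of the correct types); and (iii) checking that after this iteration, for every $g\in F$, the composite $h\circ g$ lands among the configurations forced to have color $\varepsilon$. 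The care needed is that the age maps $\c C^{\b A}(m)\to \c C(n)$ behave coherently under the extensions chosen — i.e.\ that the extraction does not accidentally change the isomorphism type of any $\b B[\phi,\b A]$ for $(\b B,\phi)$ a $T(m)$-labeled structure — which is exactly what the "aged" refinement of "strong similarity" was designed to track. I would organize this as: (1) translate $\chi$ to a coloring of level-tuples; (2) define the forcing poset and prove the relevant density lemmas; (3) apply Erdős–Rado to extract a uniform color $\varepsilon$ and a uniform target; (4) perform the fusion, at each stage using Theorem \ref{Thm:ExistsAgedEmbs} to keep the approximation an aged embedding; (5) verify $h$ has the required properties.
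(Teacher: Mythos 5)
Your plan has the right architecture and matches the paper's proof at the level of structure: a Harrington-style forcing argument, an Erd\H{o}s--Rado homogenization, and a fusion that re-extends to genuine aged embeddings at every stage via Propositions~\ref{Prop:PrimeAgeMap}, \ref{Prop:ExtendAgeMap} and Theorem~\ref{Thm:ExistsAgedEmbs}. However, the description of the forcing poset is materially off, and the discrepancy is not cosmetic --- it is the engine of the argument. You describe a condition as ``for each $t\in T(m)$, a node of $T$ extending $f'(t)$, together with a working height,'' i.e.\ a single function $T(m)\to T(\delta)$. In the paper, a condition is a function $p\colon B(p)\times N\to T(\ell(p))$ where $N=|T(m)|$ and $B(p)$ is a \emph{finite subset of a large cardinal} $\kappa$ (here $(\beth_{2N-1})^+$). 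The $\kappa$-many ``branch slots'' are what give the Erd\H{o}s--Rado theorem something to act on: Lemma~\ref{Lem:ErdosRado} colors $[\kappa]^{2N}$ and extracts countable sets $K_0<\cdots<K_{N-1}\subseteq\kappa$ with uniform data $\ell^*, \epsilon^*, t_i$. Under the poset you actually wrote down, the ``tuples of ordinals'' you propose to color simply do not exist; you are gesturing at the standard mechanism without supplying the object it needs.

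Second, the color $\epsilon^*$ is not found ``by pigeonhole for the generic branches.'' The paper fixes a $\bb{P}$-name $\dot{\c{U}}$ for a non-principal ultrafilter on $\omega$ (built from the names $\dot{L}(p)$, which one checks have the SFIP), and Lemma~\ref{Lem:FindConditionsSpl} produces, for each $\vec\alpha\in[\kappa]^N$, a condition $q_{\vec\alpha}$ and color $\epsilon_{\vec\alpha}$ with $q_{\vec\alpha}\Vdash``\dot b(\vec\alpha,\epsilon_{\vec\alpha})\in\dot{\c{U}}$.'' The ultrafilter name is then what lets the fusion step pick a single level $M$ simultaneously good for all $g\in F_n$ (by finding a common extension $r\leq_\bb{P} q_g$ and intersecting the ultrafilter-large sets $\dot b(\vec\alpha_g,\epsilon^*)$ with $\dot L(r)$). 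A bare pigeonhole in the extension does not give the required uniformity across the finitely many configurations that must be handled at each level, nor does it give a statement that can be read off in the ground model. These two points --- conditions indexed by a finite subset of $\kappa$, and color-fixing via an ultrafilter name rather than pigeonhole --- need to be corrected before the remaining steps (which you do describe accurately, including the coding-node and age-map bookkeeping in the fusion) can go through.
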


\begin{rem}
	The proof also works when $m = 0$, with $f = \emptyset$ and $F = \aemb(\ct^\b{A}|_1, \ct)$. We treat the demand that $h|_{T(\leq \tilde{f}(m-1))} = \mathrm{id}$ as vacuous, and we set $f'(\emptyset) = \emptyset$.
\end{rem}

\begin{rem}
	If one removes the coding tree structure and replaces ``aged embedding" with ``strong similarity," one recovers a version of the Halpern-L\"auchli theorem.
\end{rem}

\begin{proof}
	The overall strategy of the proof of Theorem~\ref{Thm:HL} is as follows. First, we define a large forcing poset; intuitively, this poset views $F$ as a tree-like structure and describes branches through this tree. Certain levels of these branches correspond to members of $F$, so to such a branch and level, we can associate the color that $\chi$ gives this member of $F$. In this way, we can partition the levels of a branch into finitely many pieces, and a non-principal ultrafilter on $\omega$ can decide which piece is largest. The reason we work with such a large poset is to allow us to use the Erd\H{o}s-Rado theorem; this allows us to find a neighborhood of branches (i.e.\ those branches passing through a particular member of $F$) where this level partition has similar behavior. Working in this neighborhood, we are able to construct $h$ as desired.

	Set $N = k^m$, writing $T(m) = \{s_i: i < N\}$, and suppose that $c^\b{A}(m) = s_d$. Let $\kappa$ be a suitably large infinite cardinal;  $\kappa = (\beth_{2N-1})^+$ suffices. We define a poset $\langle \mathbb{P}, \leq_\bb{P}\rangle$ as follows.
	
	\begin{itemize}
		\item 
		Elements of $\mathbb{P}$ are functions $p\colon B(p)\times N\to T(\ell(p))$ satisfying the following:
		\begin{enumerate}
			\item 
			$B(p)\subseteq \kappa$ is finite.
			\item 
			$\ell(p) \geq \tilde{f}(m-1)+1$ satisfies $c(\ell(p))\sqsupseteq f'(s_d)$ and $u(\ell(p)) = u^\b{A}(m)$.
			\item 
			$p(\alpha, i)\sqsupseteq f'(s_i)$ for each $\alpha\in B(p)$ and $i< N$.
			\item 
			For each $\alpha\in B(p)$, $p(\alpha, d) = c(\ell(p))$.
		\end{enumerate} 
		\item
		Given $p, q\in \mathbb{P}$, we declare that $q\sqsupseteq p$ if the following hold:
		\begin{enumerate}
			\item 
			$B(p)\subseteq B(q)$ and $\ell(p)\leq \ell(q)$.
			\item 
			For $\alpha\in B(p)$ and $i< N$, we have $q(\alpha, i)\sqsupseteq p(\alpha, i)$.
		\end{enumerate}
		\item 
		Suppose $q\sqsupseteq p$. Let $S\subseteq B(p)\times N$ be any subset on which $p$ is injective. Write $\theta(p, q, S)\colon p[S]\to q[S]$ for the map defined by $\theta(p, q, S)(p(\alpha, i)) = q(\alpha, i)$, where $(\alpha, i)\in S$. 
		\item 
		We declare that $q\leq_\bb{P} p$ (that $q$ is \emph{stronger than} $p$ or \emph{extends} $p$) iff $q\sqsupseteq p$ and for any $S\subseteq B(p)\times N$ on which $p$ is injective, we have that $\theta(p, q, S)$ is an age map. 
	\end{itemize}

One key point in the definition of $\langle \bb{P}, \leq_\bb{P}\rangle$ is the following: members of $F$ are determined by their value on $T(m)$. So if $p\in \mathbb{P}$ and $\{\alpha_i : i < N\}\subseteq B(p)$ are such that $\{p(\alpha_i, i): i< d\}\in F$ and $q\leq_\bb{P} p$, then also $\{q(\alpha_i, i): i< d\}\in F$. In particular, we will be able to talk about the color that $\chi$ assigns to such tuples. This ``diagonalization'' will occur quite frequently, so given $\vec{\alpha} = \{\alpha_i: i < N\}\subseteq \kappa$, we write $p(\vec{\alpha}) := \{p(\alpha_i, i): i< N\}$, and if $n\leq \ell(p)$, we write $p(\vec{\alpha})|_n := \{p(\alpha_i, i)|_n: i< N\}$. In what follows, $\vec{\alpha} = \{\alpha_i: i< N\}$ always denotes a subset of $\kappa$ of size $N$ with $\alpha_0 < \cdots < \alpha_{N-1}$.

For each $\vec{\alpha}$ and $\epsilon\in \im{\chi}$, we set
\begin{align*}
\dot{b}(\vec{\alpha}) &:= \{(q, \ell(q)): \vec{\alpha}\subseteq B(q),\, q(\vec{\alpha})\in F\},\\
\dot{b}(\vec{\alpha}, \epsilon) &:= \{(q, \ell(q)): \vec{\alpha}\subseteq B(q),\, q(\vec{\alpha})\in F,\, \chi(q(\vec{\alpha})) = \epsilon\}.
\end{align*}
So $\dot{b}(\vec{\alpha})$ and $\dot{b}(\vec{\alpha}, \epsilon)$ are names for subsets of $\omega$. For each $p\in \mathbb{P}$, we define
$$\dot{L}(p) := \{(q, \ell(q)): q\leq_\bb{P} p\}.$$
So $\dot{L}(p)$ names a subset of $\omega$, and $p\Vdash ``\dot{L}(p) \text{ is infinite}"$. We then set 
$$\dot{\c{G}} := \{(p, \dot{L}(p)): p\in \mathbb{P}\}.$$
So $\dot{\mathcal{G}}$ names a collection of infinite subsets of $\omega$. It turns out that $\mathbb{P}\Vdash ``\dot{\mathcal{G}} \text{ has the SFIP}"$ (see Definition~\ref{Def:ForcingFilterNames}). To see this, if we are given $p_0,...,p_{n-1}\in \bb{P}$ and $q\in \bb{P}$ with $q\leq_\bb{P} p_i$ for each $i< n$, then $q \Vdash ``\dot{L}(q)\subseteq \dot{L}(p_i)"$ for each $i< n$.

Let $\dot{\mathcal{U}}$ be a name for some non-principal ultrafilter extending $\dot{\mathcal{G}}$.
\vspace{2 mm}

\begin{lemma}
	\label{Lem:FindConditionsSpl}
	For each $\vec{\alpha}$, there are $q_{\vec{\alpha}}\in \mathbb{P}$ and $\epsilon_{\vec{\alpha}}\in \im{\chi}$ so that:
	\begin{enumerate}
		\item 
		$\vec{\alpha}\subseteq B(q_{\vec{\alpha}})$,
		\item 
		$q_{\vec{\alpha}}\Vdash ``\dot{b}(\vec{\alpha}, \epsilon_{\vec{\alpha}})\in \dot{\mathcal{U}}".$
	\end{enumerate}
\end{lemma}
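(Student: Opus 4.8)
The plan is to fix $\vec{\alpha}$ and build the condition $q_{\vec{\alpha}}$ by a sequence of genericity/forcing arguments. First I would observe that there is at least one condition $p$ with $\vec{\alpha}\subseteq B(p)$ and $p(\vec{\alpha})\in F$: this follows from Theorem~\ref{Thm:ExistsAgedEmbs} together with the left density hypothesis, which guarantees that the coding-node and labeling constraints (2)--(4) in the definition of $\mathbb{P}$ can be met while sending each $s_i$ to an extension of $f'(s_i)$ and $s_d$ to a genuine coding node of the right type. The point of clauses (3)--(4) and the definition of $\le_{\mathbb{P}}$ via age maps $\theta(p,q,S)$ is precisely that, for any such $p$ and any $q\le_{\mathbb{P}} p$, the tuple $q(\vec{\alpha})$ is again in $F$ (as noted in the ``key point'' paragraph following the definition of the poset), so the names $\dot b(\vec{\alpha})$ and $\dot b(\vec{\alpha},\epsilon)$ behave coherently below such conditions.

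Next I would argue that $\mathbb{P}\Vdash \dot b(\vec{\alpha})\in\dot{\mathcal U}$. The set $\dot b(\vec{\alpha})$ is, below any $p$ as above, exactly $\dot L(p)$ (up to the finitely many elements of $\dot L(p)$ that lie below $\ell(p)$): for $q\le_{\mathbb{P}} p$ we have $(q,\ell(q))\in\dot L(p)$, and then $\vec{\alpha}\subseteq B(p)\subseteq B(q)$ and $q(\vec{\alpha})\in F$, so $(q,\ell(q))\in\dot b(\vec{\alpha})$; conversely any condition forcing an element into $\dot b(\vec{\alpha})$ can be refined into $\dot L(p)$. Hence $p\Vdash \dot b(\vec{\alpha})\supseteq^{*}\dot L(p)$, and since $\dot L(p)\in\dot{\mathcal G}\subseteq\dot{\mathcal U}$ and $\dot{\mathcal U}$ is an ultrafilter closed under finite modification, $p\Vdash \dot b(\vec{\alpha})\in\dot{\mathcal U}$. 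As this holds for a dense set of $p$ (those with $\vec{\alpha}\subseteq B(p)$ and $p(\vec{\alpha})\in F$, which is dense by the existence argument plus the extension constructions of Propositions~\ref{Prop:PrimeAgeMap}, \ref{Prop:ExtendAgeMap} and Theorem~\ref{Thm:ExistsAgedEmbs}), we get $\mathbb{P}\Vdash\dot b(\vec{\alpha})\in\dot{\mathcal U}$.

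Now $\dot b(\vec{\alpha})=\dot b(\vec{\alpha},0)\sqcup\dot b(\vec{\alpha},1)$ is forced to be a disjoint union, and an ultrafilter contains exactly one of two complementary-within-a-member pieces, so $\mathbb{P}\Vdash \exists\,\epsilon<2\ \dot b(\vec{\alpha},\epsilon)\in\dot{\mathcal U}$. By a maximal-antichain / mixing argument I would extract, below a suitable condition, a definite value: there is $q\in\mathbb{P}$ with $\vec{\alpha}\subseteq B(q)$ and $\epsilon_{\vec{\alpha}}<2$ such that $q\Vdash \dot b(\vec{\alpha},\epsilon_{\vec{\alpha}})\in\dot{\mathcal U}$. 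Concretely, fix any $p$ with $\vec{\alpha}\subseteq B(p)$; the set of conditions deciding which $\epsilon$ works is dense below $p$, so pick $q\le_{\mathbb{P}} p$ deciding it, and since $\vec{\alpha}\subseteq B(p)\subseteq B(q)$ the condition $q=q_{\vec{\alpha}}$ has both required properties.

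The main obstacle I anticipate is the bookkeeping in the second step: verifying carefully that $\dot b(\vec{\alpha})$ really does agree, modulo a finite set, with $\dot L(p)$ below $p$ — i.e.\ that every element forced into $\dot b(\vec{\alpha})$ can be captured by strengthening into the $\le_{\mathbb{P}}$-cone below $p$, using that strengthenings only append $0$'s below the existing values and that the age-map condition in $\le_{\mathbb{P}}$ is compatible with the $F$-membership condition. This is where the precise design of the poset (clauses (3), (4), and the $\theta(p,q,S)$ age-map requirement) is doing the real work, and it must be checked that no conflict arises between "extend each coordinate to a coding node at the common new level" and "keep $\theta$ an age map"; Propositions~\ref{Prop:PrimeAgeMap} and~\ref{Prop:ExtendAgeMap} are exactly the tools for this. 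The large cardinal $\kappa$ plays no role in this lemma; it will be used only later, presumably in a subsequent lemma applying the Erd\H{o}s--Rado theorem to the assignment $\vec{\alpha}\mapsto(\epsilon_{\vec{\alpha}}, \text{type of } q_{\vec{\alpha}})$ to extract a monochromatic set.
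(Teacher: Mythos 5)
Your proposal is correct and follows essentially the same approach as the paper: construct a condition $p$ with $B(p)=\vec{\alpha}$ and $p(\vec{\alpha})\in F$ (the paper simply fixes one $g\in F$ and sets $p(\alpha_i,j)=g(s_j)$ for all $i,j<N$), observe that $p\Vdash``\dot L(p)\subseteq\dot b(\vec{\alpha})"$ and hence $p\Vdash``\dot b(\vec{\alpha})\in\dot{\mathcal U}"$, and then strengthen to decide which $\epsilon$ lands in $\dot{\mathcal U}$. The ``main obstacle'' you anticipate --- checking that $\dot b(\vec{\alpha})$ agrees with $\dot L(p)$ modulo a finite set --- does not actually arise, since only the one-directional inclusion $\dot L(p)\subseteq\dot b(\vec{\alpha})$ (which you proved correctly) is needed: $(p,\dot L(p))\in\dot{\mathcal G}\subseteq\dot{\mathcal U}$ together with $p\Vdash``\dot L(p)\subseteq\dot b(\vec{\alpha})"$ already yields $p\Vdash``\dot b(\vec{\alpha})\in\dot{\mathcal U}"$ by upward closure of the ultrafilter name (Fact~\ref{Prop:ForcingBasicUlts}), with no converse inclusion required.
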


\begin{proof}
	We first define a condition $p_{\vec{\alpha}}$ by first fixing some $g\in F$, the same $g$ for all $\vec{\alpha}$. We set $B(p) = \vec{\alpha}$, and for $i, j < N$, we set $p_{\vec{\alpha}}(\alpha_i, j) = g(s_j)$.
	Notice in particular that $p_{\vec{\alpha}}(\vec{\alpha})\in F$. This implies that $p_{\vec{\alpha}}\Vdash ``\dot{L}(p_{\vec{\alpha}})\subseteq \dot{b}(\vec{\alpha}),"$ so in particular $p_{\vec{\alpha}}\Vdash ``\dot{b}(\vec{\alpha})\in \dot{\c{U}}."$ Then find $q_{\vec{\alpha}}\leq_\bb{P} p_{\vec{\alpha}}$ and $\epsilon_{\vec{\alpha}}\in \im{\chi}$ with $q_{\vec{\alpha}}\Vdash ``\dot{b}(\vec{\alpha}, \epsilon_{\vec{\alpha}})\in \dot{\c{U}}."$
\end{proof}
\vspace{2 mm}

We are now prepared to apply the Erd\H{o}s-Rado theorem.
\vspace{2 mm}

\begin{lemma}
	\label{Lem:ErdosRado}
	Let $q_{\vec{\alpha}}$ and $\epsilon_{\vec{\alpha}}$ be as in Lemma~\ref{Lem:FindConditionsSpl}. There are countably infinite subsets $K_0 < \cdots < K_{N-1}$ of $\kappa$ so that the following hold.

\begin{enumerate}
	\item 
	There are $\ell^*< \omega$ and $\epsilon^* \in \im{\chi}$ so that for every $\vec{\alpha}\in \prod_{i< N} K_i$, we have $\ell(q_{\vec{\alpha}}) = \ell^*$ and $\epsilon_{\vec{\alpha}} = \epsilon^*$. 
	
	\item 
	With $\ell^*$ as in item 1, there are $t_i\in T(\ell^*)$ so that for every $\vec{\alpha}\in \prod_{i< N} K_i$, we have $q_{\vec{\alpha}}(\alpha_i, i) = t_i$. 
	
	\item 
	If $J\subseteq \prod_{i< N} K_i$ is finite, then $\bigcup_{\vec{\alpha}\in J} q_{\vec{\alpha}}\in \mathbb{P}$.
\end{enumerate}
\end{lemma}

\begin{proof}
	Recall that by the Erd\H{o}s-Rado theorem, we have $\kappa\to (\aleph_1)^{2N}_{\aleph_0}$. This means that for any coloring of $[\kappa]^{2N}$ in $\aleph_0$-many colors, there is a subset $X\subseteq \kappa$ of size $\aleph_1$ on which the coloring is monochromatic.
	
	If $\vec{\alpha}\in [\kappa]^N$, we let $i_{\vec{\alpha}}\colon |B(q_{\vec{\alpha}})|\to B(q_{\vec{\alpha}})$ denote the increasing bijection, and we let $q_{\vec{\alpha}}\cdot i_{\vec{\alpha}}\colon |B(q_{\vec{\alpha}})|\times N\to T(\ell(q_{\vec{\alpha}}))$ be the map given by $q_{\vec{\alpha}}\cdot i_{\vec{\alpha}}(n, i) = q_{\vec{\alpha}}(i_{\vec{\alpha}}(n), i)$. If $S\in [\kappa]^{2N}$, we let $i_S\colon |\bigcup_{\vec{\alpha}\in [S]^N} B(q_{\vec{\alpha}})|\to \bigcup_{\vec{\alpha}\in [S]^N} B(q_{\vec{\alpha}})$ denote the increasing bijection. We also let $j_S\colon 2N\to S$ denote the increasing bijection.
	
	Given $S\in [\kappa]^{2N}$, we define 
	$$\Theta(S) = \{(A,\, \epsilon_{j_S[A]},\, q_{j_S[A]}\cdot i_{j_S[A]},\, i_S^{-1}(B(q_{j_S[A]})),\, i_S^{-1}(j_S[A])): A\in [2N]^N\}.$$
	Then $\Theta$ is a map on $[\kappa]^{2N}$ with countable image, and we may use the Erd\H{o}s-Rado theorem to find $X\subseteq \kappa$ of size $\aleph_1$ on which $\Theta$ is monochromatic. Now let $K_0 < \cdots < K_{N-1}$ be subsets of $X$, each in order type $\omega$, so that for any $i< N$ and between any consecutive members of $K_i$, we can find another member of $X$.
	
	It is immediate that item $1$ holds. For item $2$, we note that for each fixed $A\in [2N]^N$, we have that the set $i_{j_S[A]}^{-1}(j_S[A])$ is the same for every $S\in [X]^{2N}$. So for any $\vec{\alpha}\in [X]^N$, let $S\in [X]^{2N}$ be chosen so that $\vec{\alpha} = j_S[N]$, where $N = \{0,...,N-1\}\in [2N]^N$. For each $i< N$, let $n_i< \omega$ be such that $i_{j_S[N]}(n_i) = \alpha_i$. Since $n_i$ does not depend on $S$, we have $t_i = q_{\vec{\alpha}}(\alpha_i, i) = q_{j_S[N]}\cdot i_{j_S[N]}(n_i, i)$ the same for every $\vec{\alpha}\in [X]^N$.
	
	For item 3, it suffices to show that if $\vec{\alpha}, \vec{\beta}\in \prod_{i< N} K_i$ and $\delta\in B(q_{\vec{\alpha}})\cap B(q_{\vec{\beta}})$, then for each $i< N$ we have $q_{\vec{\alpha}}(\delta, i) = q_{\vec{\beta}}(\delta, i)$. To show this, let $a< \omega$ be such that $i_{\vec{\alpha}}(a) = \delta$. Then we want $i_{\vec{\beta}}(a) = \delta$ as well. We know that $i_{\vec{\beta}}(b) = \delta$ for some $b< \omega$. Let $\vec{\gamma}\in [X]^N$ be chosen so that $\gamma_i$ is strictly between $\alpha_i$ and $\beta_i$ unless $\alpha_i = \beta_i$, in which case $\gamma_i = \alpha_i = \beta_i$. Let $Q, R, S\in [X]^{2N}$ be chosen with $\vec{\alpha}\cup \vec{\beta}$, $\vec{\alpha}\cup \vec{\gamma}$ and $\vec{\gamma}\cup \vec{\beta}$, respectively, as initial segments. Let $A, B\in [2N]^N$ be the sets with $j_Q[A] = \vec{\alpha}$, $j_Q[B] = \vec{\beta}$, $j_R[A] = \vec{\alpha}$, $j_R[B] = \vec{\gamma}$, $j_S[A] = \vec{\gamma}$, and $j_S[B] = \vec{\beta}$. Since $i_{j_Q[A]}(a) = i_{j_Q[B]}(b)$, this is also true for $R$ and $S$. But then $i_{j_Q[A]}(a) = i_{j_R[A]}(a) = i_{j_R[B]}(b)$ and $i_{j_Q[B]}(b) = i_{j_S[B]}(b) = i_{j_S[A]}(a)$. But since $j_R[B] = j_S[A] = \vec{\gamma}$, we must have $a = b$ as desired.
\end{proof}
\vspace{2 mm}

We now turn towards the construction of $h\in \aemb(\ct, \ct)$ from the statement of Theorem~\ref{Thm:HL}, which proceeds inductively level by level. To assist us, we will simultaneously build aged embeddings 
$$\psi_n\in \aemb(\ct|_{n+1}, \ct)$$
for every $n \geq \tilde{f}(m-1)+1$. The choice of indexing is because the domain of $\psi_n$ is $T({\leq}n)$.

Letting 
$$\eta\colon (T(m), \c{C}^\b{A})\to T(\ell^*)$$
denote the map with $\eta(s_i) = t_i$, we have that $\eta$
is an age map. Since 
$$f'\colon (T(m), \c{C}^\b{A})\to T(\tilde{f}(m-1)+1)$$
is an age map by Proposition~\ref{Prop:PrimeAgeMap}, we obtain an age map
$$\gamma\colon f'[T(m)]\to T(\ell^*)$$
by setting $\gamma(f'(s_i)) = t_i$. Note that $\gamma(x)\sqsupseteq x$ for each $x\in f'[T(m)]$. We use Proposition~\ref{Prop:ExtendAgeMap} to extend the domain of $\gamma$ to all of $T(\tilde{f}(m-1)+1)$. We now use Theorem~\ref{Thm:ExistsAgedEmbs} to find an aged embedding 
$$\psi_{\tilde{f}(m-1)+1}\in \aemb(\ct|_{\tilde{f}(m-1)+2}, \ct)$$
which is the identity on $T(\leq \tilde{f}(m-1))$ and with $\psi_{\tilde{f}(m-1)+1}(x)\sqsupseteq \gamma(x)$ for each \newline $x\in T(\tilde{f}(m-1)+1)$.

Set $F(m):= \{\tilde{g}(m): g\in F\}$. Suppose the aged embedding $\psi_n$ has been defined for some $n\geq \tilde{f}(m-1)+1$ so that $\psi_n(x)\sqsupseteq \gamma(x)$ for each $x\in f'[T(m)]$. Two cases emerge. In the easy case, where $n\not\in F(m)$, we set $h|_{T(\leq n)} = \psi_n$, and let $\psi_{n+1}$ be any aged embedding extending $\psi_n$.

The difficult case is when $n\in F(m)$. Let $F_n = \{g\in F: g(m) = n\}$, and set $S_i := \{\psi_n\circ g(s_i): g\in F_n\}\subseteq T(\tilde{\psi}_n(n))$. Let $\rho_i\colon S_i\to K_i$ be any injection. For each $g\in F_n$, let $$\vec{\alpha}_g := \{\rho_i\circ \psi_n\circ g(s_i): i< k^m\},$$
and set $q_g := q_{\vec{\alpha}_g}$. Set $q = \bigcup_{g\in F_n} q_g$. So by Lemma~\ref{Lem:ErdosRado}, we have $q\in \mathbb{P}$. We now define a condition $r\in \mathbb{P}$ as follows.
\begin{enumerate}
	\item 
	$B(r) = B(q) = \bigcup_{g\in F_n} B(q_g)$, and $\ell(r) = \tilde{\psi}_n(n)$.
	\item 
	If $\alpha\in B(r)$ and there is $g\in F_n$ with $\alpha = \rho_i\circ \psi_n\circ g(s_i)$, then set $r(\alpha, i) = \psi_n\circ g(s_i)$.
	\item 
	Otherwise, set $r(\alpha, i) = \Left(q(\alpha, i), \tilde{\psi}_n(n))$ for $i\neq d$, and set $r(\alpha, d) = \psi_n\circ g(s_d)$ for any $g\in F_n$ (well defined since $g(s_d) = c(n)$ for every $g\in F_n$).
\end{enumerate}
\vspace{2 mm}

\begin{lemma}
	\label{Lem:LiftMagicConditionSpl}
	The condition $r$ is well defined, and $r\leq_\bb{P} q_g$ for each $g\in F_n$.
\end{lemma}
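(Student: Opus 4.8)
The plan is to verify that $r$ satisfies the four defining conditions of a member of $\mathbb{P}$, and then check the ordering relation $r\leq_{\mathbb{P}} q_g$ by confirming (i) $r\sqsupseteq q_g$ and (ii) every map $\theta(q_g, r, S)$ with $S$ a set on which $q_g$ is injective is an age map.

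First I would check well-definedness. The only ambiguity is in clause (2): a given $\alpha\in B(r)$ could in principle equal $\rho_i\cdot\psi_n\cdot g(s_i)$ for more than one pair $(g,i)$. But the maps $\rho_i\colon S_i\to K_i$ have pairwise disjoint ranges (since the $K_i$ are pairwise disjoint), so the index $i$ is determined by $\alpha$; and since $\rho_i$ is injective, $\psi_n\cdot g(s_i)$ is determined, hence $r(\alpha,i)=\psi_n\circ g(s_i)$ is unambiguous even if several $g\in F_n$ yield the same value $g(s_i)$. In clause (3), $r(\alpha,d)=\psi_n\cdot g(s_d)=\psi_n(c(n))=c(\tilde\psi_n(n))$, independent of $g$, so this is well defined too. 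I would then verify conditions (1)--(4) for membership in $\mathbb{P}$: (1) $B(r)=B(q)$ is finite; (2) $\ell(r)=\tilde\psi_n(n)\geq\tilde f(m-1)+1$, and since $\psi_n$ is an aged embedding, $c(\tilde\psi_n(n))=\psi_n(c(n))\sqsupseteq \psi_n(f'(s_d))\sqsupseteq\gamma(f'(s_d))\sqsupseteq f'(s_d)$ — here using the inductive hypothesis $\psi_n(x)\sqsupseteq\gamma(x)$ on $f'[T(m)]$ and $\gamma(f'(s_d))=t_d\sqsupseteq f'(s_d)$ — and likewise $u(\tilde\psi_n(n))=u(\tilde g(m))=u^{\b A}(m)$ since $g\in F$; (3) $r(\alpha,i)\sqsupseteq f'(s_i)$ in all cases, because in clause (2) $r(\alpha,i)=\psi_n(g(s_i))\sqsupseteq\psi_n(f'(s_i))\sqsupseteq\gamma(f'(s_i))\sqsupseteq f'(s_i)$ using that $g(s_i)\sqsupseteq f'(s_i)$ (as $g\in F$ extends $f$, item 4 of Definition~\ref{Def:Embedding}), and in clause (3) $r(\alpha,i)=\Left(q(\alpha,i),\tilde\psi_n(n))$ where $q(\alpha,i)\sqsupseteq f'(s_i)$ since $q\in\mathbb{P}$, and $\Left$ preserves this initial segment; (4) $r(\alpha,d)=c(\ell(r))$ was checked above.

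Next I would check $r\sqsupseteq q_g$ for fixed $g\in F_n$. For the height: $\ell(q_g)=\ell^*$ by Lemma~\ref{Lem:ErdosRado}(1), and $\tilde\psi_n(n)\geq \ell^*$ because $\psi_n(x)\sqsupseteq\gamma(x)$ for $x\in f'[T(m)]$ with $\gamma$ landing in $T(\ell^*)$, forcing $\tilde\psi_n(n)\geq\ell(\psi_n(f'(s_d)))\geq\ell^*$ (more carefully, $\psi_n$ restricted to $T(\tilde f(m-1)+1)$ already has image reaching height $\geq\ell^*$ since $\psi_{\tilde f(m-1)+1}(x)\sqsupseteq\gamma(x)$ and $\psi_n$ extends it). For the inclusion $B(q_g)\subseteq B(r)$: immediate from $B(r)=\bigcup_{g'\in F_n}B(q_{g'})$. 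The substantive point is clause (2) of $\sqsupseteq$: for $\alpha\in B(q_g)$ and $i<N$ I need $r(\alpha,i)\sqsupseteq q_g(\alpha,i)^\frown 0$. If $\alpha=\rho_i\cdot\psi_n\cdot g(s_i)$, then $q_g(\alpha,i)=q_{\vec\alpha_g}(\alpha_i,i)=t_i$ by Lemma~\ref{Lem:ErdosRado}(2), and $r(\alpha,i)=\psi_n(g(s_i))\sqsupseteq\gamma(f'(s_i))=t_i$, but I actually need the strictly-below-plus-zero condition — here I use that $\psi_n$ is an aged embedding so $\tilde\psi_n(n)>\tilde\psi_n$ applied at the level of $g(s_i)$, or more directly that $\psi_n(g(s_i))$ has height $\tilde\psi_n(n)$ which is strictly greater than $\ell^*=\ell(t_i)$, so $\psi_n(g(s_i))\sqsupseteq t_i$ and a proper extension must agree with $t_i^\frown j$ for some $j$; the requirement is $j=0$, which holds because $\gamma(f'(s_i))=t_i$ means $\psi_n(f'(s_i))\sqsupseteq\gamma(f'(s_i))=t_i$ and the convention $\Left$/left-density forces the next coordinate to be $0$ — I'd need to track this via how $\psi_n$ was built from $\gamma$ through $\Left$ and Theorem~\ref{Thm:ExistsAgedEmbs}. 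If $\alpha$ is not of that form, $r(\alpha,i)=\Left(q(\alpha,i),\tilde\psi_n(n))=\Left(q_g(\alpha,i),\tilde\psi_n(n))$ for $i\neq d$, which is $q_g(\alpha,i)^\frown 0^\cdots\sqsupseteq q_g(\alpha,i)^\frown 0$, and for $i=d$, $q_g(\alpha,d)=c(\ell^*)$ while $r(\alpha,d)=c(\tilde\psi_n(n))\sqsupseteq\psi_n(c(n))\sqsupseteq\gamma(f'(s_d))=t_d=c(\ell^*)$ — again needing the $^\frown 0$ refinement, handled the same way.

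I expect the main obstacle to be the $\leq_{\mathbb{P}}$ part: showing $\theta(q_g,r,S)$ is an age map for every $S\subseteq B(q_g)\times N$ on which $q_g$ is injective. The idea is to factor this through known age maps. On the "active" coordinates — those $(\alpha,i)$ with $\alpha=\rho_i\cdot\psi_n\cdot g(s_i)$ — the map $\theta$ sends $q_g(\alpha,i)=t_i=\eta(s_i)$ to $r(\alpha,i)=\psi_n(g(s_i))$, so it is essentially $\psi_n\circ g\circ\eta^{-1}$ on $\eta[T(m)]$; since $\eta$, $g$ (as an aged embedding, Definition~\ref{Def:AgedSetsEmbeddings}(7)), and $\psi_n$ are all age maps, the composite is an age map. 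On the remaining coordinates, $\theta$ is a restriction of the $\Left$-extension map, which is an age map by Proposition~\ref{Prop:ExtendAgeMap} (or rather $\theta$ composed with it). The delicate point is that $S$ may mix active and inactive coordinates and may not be "closed" in any structural sense, so I will likely need to observe that the structure $p[S]$, for any such $S$, embeds into one of the structures $\b{B}[\phi,\b{A}]$ governing the relevant ages, and that $\theta(q_g,r,S)$ is the restriction of an age map defined on a larger, structurally-closed set — using free amalgamation (Proposition~\ref{Prop:ExtendAgeMap}'s proof technique) to handle the lack of relations between active and inactive blocks at the new levels $[\ell^*,\tilde\psi_n(n))$. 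Getting the bookkeeping of which coordinates are active versus inactive to cohere with the age-map condition across all admissible $S$ is where the real work lies.
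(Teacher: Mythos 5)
Your proof is correct and takes essentially the same approach as the paper: verify $\mathbb{P}$-membership of $r$ directly, check $r\sqsupseteq q_g$, and then show $\theta(q_g,r,S)$ is an age map by treating the nodes of the form $t_i$ (where $\theta$ is a composite of the age maps $\eta^{-1}$, $g$, and $\psi_n$) separately from the remaining nodes (where $\theta$ is the left-successor map, handled by Proposition~\ref{Prop:ExtendAgeMap}). You are in fact more careful than the paper on the $^\frown 0$ requirement built into $r\sqsupseteq q_g$, which the paper asserts without comment; the resolution you sketch is the right one, resting on $\psi_{\tilde f(m-1)+1}$ having been built via $\Left(\gamma(\cdot),\, r)$ in the proof of Theorem~\ref{Thm:ExistsAgedEmbs} and on Lemma~\ref{Lem:ErdosRado}(3) forcing $q_g$ and $q_{g'}$ to agree on overlapping supports.
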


\begin{proof}
	To see that $r$ is well defined, we note that if $\alpha = \rho_i\circ \psi_n\circ g(s_i)$ for more than one $g\in F_n$, then these $g$ must agree on $s_i$, as $\rho_i$ and $\psi_n$ are injective. We note that $r(\alpha, i)\sqsupseteq f'(s_i)$ for each $\alpha\in B(r)$ and $i< N$. We also have that $r(\alpha, d) = c(\tilde{\psi}_n(n))$ and $u(\tilde{\psi}_n(n)) = u^\b{A}(m)$ since $\psi_n\in \aemb(\ct|_{n+1}, \ct)$. Hence $r\in \mathbb{P}$. 
	
	Fix $g\in F_n$. To see that $r\leq_\bb{P} q_g$, we note first that $r\sqsupseteq q_g$. Let $S\subseteq B(q_g)\times N$ be a subset on which $q_g$ is injective, and form the map $\theta := \theta(q_g, r, S)\colon q_g[S]\to r[S]$. For a subset $V\subseteq q_g[S]\cap \{t_i: i< N\}$ with $t_d\in V$, we have $\theta(t_i) = g(s_i)$. So $\theta\colon V\to r[S]$ is an age map  since $g\colon (T(m), \c{C}^\b{A})\to T(\ell^*)$ and $\eta$ are age maps. Outside of $V$, $\theta$ is just the left successor map, and we use Proposition~\ref{Prop:ExtendAgeMap} to conclude that $\theta$ is an age map.
\end{proof}
\vspace{2 mm}

Since $r\leq_\bb{P} q_g$ for each $g\in F_n$, we have $r\Vdash ``\dot{b}(\vec{\alpha}_g, \epsilon^*) \in \dot{\c{U}}."$ Since we also have $r\Vdash ``\dot{L}(r)\in \dot{\c{U}},"$ we may find $y_0\leq_\bb{P} r$ and some fixed $M> \tilde{\psi}_n(n)$ so that $y_0\Vdash ``M\in \dot{L}(r)"$ and $y_0\Vdash ``M\in \dot{b}(\vec{\alpha}_g, \epsilon^*)"$ for each $g\in F_n$.
Then strengthen to  $y_1\leq_\bb{P} y_0$ in order to find $y\geq_\bb{P} y_1$ and $y_g\geq_\bb{P} y_1$ with $(y, M)\in \dot{L}(r)$ and $(y_g, M)\in \dot{b}(\vec{\alpha}, \epsilon^*)$. In particular, we have $\ell(y) = \ell(y_g) = M$, $y\leq_\bb{P} r$, and $\chi(y_g(\vec{\alpha}_g)) = \epsilon^*$. But since $y_1$ strengthens both $y$ and $y_g$, we must also have $y(\vec{\alpha}_g) = y_g(\vec{\alpha}_g)$. So also $\chi(y(\vec{\alpha}_g)) = \epsilon^*$.

For ease of notation set $\alpha_g(i) = \rho_i\circ \psi_n\circ g(s_i)$. So by the definition of $r$, we have $r(\alpha_g(i), i) = \psi_n\circ g(s_i)$. Notice that if $r(\alpha_{g_0}(i), i) = r(\alpha_{g_1}(j), j)$ for $g_0, g_1\in F_n$ and $i, j< d$, then $i = j$ and $\alpha_{g_0}(i) = \alpha_{g_1}(i)$. Hence the map $$\xi\colon \{r(\alpha_g(i), i): g\in F_n, i< d\}\to T(M)$$
given by $\xi(r(\alpha_g(i), i)) = y(\alpha_g(i), i)$ is well defined. Since $y\leq_\bb{P} r$, $\xi$ is an age map. Extend the domain of $\xi$ to all of $\psi_n[T(n)]$ using Proposition~\ref{Prop:ExtendAgeMap}. Noting that for any $g\in F_n$, we have $\xi(c(\tilde{\psi}_n(n))) = \xi(r(\alpha_g(d), d)) = y(\alpha_g(d), d) = c(M)$, it follows that we can define $h|_{T(\leq n)}$ on $T(n)$ by setting $h(t) = \xi(\psi_n(t))$. We then let $\psi_{n+1}\in \aemb(\ct|_{n+2}, \ct)$ be any extension of $h|_{T(\leq n)}$.

This concludes the construction of $h$ and the proof of Theorem~\ref{Thm:HL}.
\end{proof}
\vspace{2 mm}

\begin{theorem}
	\label{Thm:Milliken}
	Suppose $\b{A}\in \c{K}$ is an enumerated structure. Let $\chi\colon \aemb(\ct^\b{A}, \ct)\to 2$ be a coloring. Then there is $h\in \aemb(\ct, \ct)$ with $h\cdot \aemb(\ct^\b{A}, \ct)$ monochromatic.
\end{theorem}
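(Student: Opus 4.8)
The plan is to argue by induction on $n := |\b{A}|$, peeling off one level of $\ct^\b{A}$ at a time via Theorem~\ref{Thm:HL}. I use freely that aged embeddings are closed under composition, so that $\aemb(\ct, \ct)$ acts on $\aemb(\ct^\b{B}, \ct)$ for every enumerated $\b{B}$ with $\age{\b{B}} \subseteq \c{K}$; given $h \in \aemb(\ct, \ct)$ and a $2$-coloring $\chi$ of $\aemb(\ct^\b{B}, \ct)$, write $\chi_h$ for the pulled-back $2$-coloring $g \mapsto \chi(h\cdot g)$. I also use that $\aemb(\ct^\b{B}, \ct)$ is countable and that, for each fixed $L$, only finitely many $g \in \aemb(\ct^\b{B}, \ct)$ have top level $\tilde g(|\b{B}|-1) = L$ (such a $g$ sends the finite set $T(<|\b{B}|)$ into $T(\leq L)$). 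The case $n = 1$ is immediate: then $\aemb(\ct^\b{A}, \ct) = \aemb(\ct^\b{A}|_1, \ct)$, so Theorem~\ref{Thm:HL} with $m = 0$ and $f = \emptyset$ directly supplies the required $h$.

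Now fix $n > 1$ and assume the theorem for all enumerated structures in $\c{K}$ of smaller size. Put $\b{A}' := \b{A}_{n-1}$, the induced substructure of $\b{A}$ on $\{0, \dots, n-2\}$, which lies in $\c{K}$ by heredity, and note $\ct^{\b{A}'} = \ct^\b{A}|_{n-1}$. For $f \in \aemb(\ct^{\b{A}'}, \ct)$ set $F_f := \{g \in \aemb(\ct^\b{A}, \ct) : g|_{T(<n-1)} = f\}$, so that $\aemb(\ct^\b{A}, \ct) = \bigsqcup_f F_f$ and each $F_f$ is exactly the set to which Theorem~\ref{Thm:HL} applies with $m = n-1$. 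The first step is a fusion producing $h_1 \in \aemb(\ct, \ct)$ with $\chi_{h_1}$ constant on $F_f$ for \emph{every} $f \in \aemb(\ct^{\b{A}'}, \ct)$. To this end, enumerate $\aemb(\ct^{\b{A}'}, \ct) = \{f_0, f_1, \dots\}$ with non-decreasing top levels $\tilde f_0(n-2) \leq \tilde f_1(n-2) \leq \cdots$, put $h^{(0)} := \mathrm{id}$, and recursively let $r^{(s)} \in \aemb(\ct, \ct)$ be the embedding produced by Theorem~\ref{Thm:HL} applied to the $2$-coloring $\chi_{h^{(s)}}$ restricted to $F_{f_s}$, setting $h^{(s+1)} := h^{(s)} \cdot r^{(s)}$. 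Then $r^{(s)}$ is the identity on $T(\leq \tilde f_s(n-2))$, and $\chi_{h^{(s+1)}}$ is constant on $F_{f_s}$. Since the top levels tend to infinity, the $h^{(s)}$ are eventually constant on each level of $T$, so they have a limit $h_1 \in \aemb(\ct, \ct)$. Finally, for fixed $s$ the composite $r^{(s+1)} \cdot r^{(s+2)} \cdots$ (up to any later stage, and in the limit) is the identity on the image of $f_s$, since top levels are non-decreasing; it therefore maps $F_{f_s}$ into itself, and since $\chi_{h^{(s+1)}}$ is constant on $F_{f_s}$, so are $\chi_{h^{(t)}}$ for $t > s$ and $\chi_{h_1}$.

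With such $h_1$ in hand, define $\chi_1 \colon \aemb(\ct^{\b{A}'}, \ct) \to 2$ by letting $\chi_1(f)$ be the common value of $\chi_{h_1}$ on $F_f$; this is well defined because $F_f \neq \emptyset$ by Theorem~\ref{Thm:ExistsAgedEmbs}. Applying the inductive hypothesis to $\b{A}'$ and $\chi_1$ yields $h_2 \in \aemb(\ct, \ct)$ with $\chi_1$ constant, say with value $\epsilon$, on $h_2 \cdot \aemb(\ct^{\b{A}'}, \ct)$. Then $h := h_1 \cdot h_2$ works: for any $g \in \aemb(\ct^\b{A}, \ct)$, set $f := (h_2 \cdot g)|_{T(<n-1)} = h_2 \cdot (g|_{T(<n-1)}) \in h_2 \cdot \aemb(\ct^{\b{A}'}, \ct)$; then $h_2 \cdot g \in F_f$, so $\chi(h\cdot g) = \chi_{h_1}(h_2 \cdot g) = \chi_1(f) = \epsilon$, a value not depending on $g$. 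This closes the induction.

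The only real difficulty is the fusion in the inductive step: Theorem~\ref{Thm:HL} homogenizes the fiber over a single $f$, so it has to be invoked once for each of the infinitely many $f \in \aemb(\ct^{\b{A}'}, \ct)$, and one must ensure both that the partial composites $h^{(s)}$ converge and that homogenizing the fiber over $f_s$ survives all later applications. Both points are secured by the feature of Theorem~\ref{Thm:HL} that its output is the identity below the level $\tilde f(m-1)$ attached to the structure being processed, together with the bookkeeping choice of processing the $f_s$ in non-decreasing order of top level; this is precisely the classical passage from a Halpern--L\"auchli statement to a Milliken statement, adapted to coding trees and aged embeddings.
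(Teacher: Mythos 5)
Your proof is correct and is essentially the paper's own argument, phrased slightly differently: the paper enumerates $\aemb(\ct^\b{A}|_m, \ct)$ by non-decreasing top level, applies Theorem~\ref{Thm:HL} once per $f_i$ to build a telescoping sequence $\psi_i$ of partial composites stabilizing to a limit $\phi$ (your $h_1$), checks by the same "identity-below-$n_i$" argument that $\chi\cdot\phi$ is constant on each fiber $F_i$, and then invokes the inductive hypothesis to get $\xi$ (your $h_2$) and sets $h = \phi\cdot\xi$. The only cosmetic difference is that you name the per-step extensions $r^{(s)}$ rather than $h_i$; the decomposition, the fusion, the use of the identity-below-level feature of Theorem~\ref{Thm:HL}, and the final two-step composition are all identical.
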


\begin{rem}
	If one removes the coding tree structure and replaces ``aged embedding" with ``strong similarity," one recovers a version of Milliken's theorem.
\end{rem}

\begin{proof}
	We induct on $|\b{A}|$. When $|\b{A}| = 1$, this follows directly from Theorem~\ref{Thm:HL}.
	
	Assume the theorem is true if $|\b{A}|\leq m$, and suppose $|\b{A}| = m+1$. Enumerate $\aemb(\ct^\b{A}|_m, \ct)$ as $\{f_i: i< \omega\}$ in such a way that whenever $i< j < \omega$, we have $\tilde{f}_i(m-1)\leq \tilde{f}_j(m-1)$. For ease of notation, set $n_i := \tilde{f}_i(m-1)$. Let \newline $F_i = \{g\in \aemb(\ct^\b{A}, \ct): g \text{ extends } f_i\}$.
	
	We inductively define for each $i< \omega$ an aged embedding $h_i\in \aemb(\ct, \ct)$ as follows. Use Theorem~\ref{Thm:HL} on $f_0$ and $F_0$ to obtain $h_0$ with $h_0|_{T(\leq n_0)} = \mathrm{id}$ and $h_0\cdot F_0$ monochromatic for $\chi$, say with color $j_0\in 2$. If $h_0,..., h_{i-1}$ have been defined, then write $\psi_i = h_0\circ \cdots \circ h_{i-1}$. Then use Theorem~\ref{Thm:HL} to obtain $h_i$ with $h_i|_{T(\leq n_i)} = \mathrm{id}$ and $h_i\cdot F_i$ monochromatic for $\chi\cdot \psi_i$, say with color $j_i\in 2$. 
	
	We notice that for any $t\in T$ and any sufficiently large $i< \omega$, $h_i(t) = t$. It follows that the sequence $(\psi_i(t))_i$ stabilizes, and we define $\phi\in \aemb(\ct, \ct)$ by setting $\phi(t) = \lim_i \psi_i(t)$. Then $\phi$ is an aged embedding since each $\psi_i$ is an aged embedding. Suppose $g\in F_i$. Then for any $n\geq i$, we also have $(h_i\circ\cdots \circ h_n)\cdot g\in F_i$. It follows that by taking $n$ large enough, we have $\chi\cdot\phi(g) = \chi\cdot \psi_i( (h_i\circ \cdots \circ h_n)\cdot g) = j_i$. 
	
	Hence we can now view $\chi\cdot \phi$ as a coloring of $\aemb(\ct^\b{A}|_m, \ct)$ by setting $\chi\cdot \phi(f_i) = j_i$. Using our inductive hypothesis, find $\xi\in \aemb(\ct, \ct)$ with $\xi\cdot \aemb(\ct^\b{A}|_m, \ct)$ monochromatic for $\chi\cdot \phi$. We now set $h = \phi\cdot \xi$.
\end{proof}

\section{Envelopes}
\label{Sec:Envelopes}

In order to apply Theorem~\ref{Thm:Milliken} and obtain upper bounds for big Ramsey degrees, we need to understand which subsets of $\omega$ are of the form $\im{\tilde{f}}$ for some $f\in \aemb(\ct^\b{A}, \ct)$ and $\b{A}\in \c{K}$. We will call sets of this form \emph{envelopes}. This leads to the notion of the \emph{closure} $\overline{S}$ of a finite subset $S\subseteq \omega$; this is just the smallest superset of $S$ which is an envelope. It will be crucial to understand how large $\overline{S}$ can be compared to $S$. In particular, we develop an abstract criterion in terms of the sizes of closures of finite $S\subseteq \omega$, Theorem~\ref{Thm:EnvelopesImplyDegrees}, which implies that $\c{K}$ has finite big Ramsey degrees.
\vspace{2 mm}

\begin{defin}
	\label{Def:Envelope}
	Suppose $S\subseteq \omega$. Letting $i_S\colon |S|\to S$ be the increasing bijection, we set $\b{K}_S := \b{K}\cdot i_S$, $\ct^S := (c^S, u^S) := \ct^{\b{K}_S}$, and $\c{C}^S = \c{C}^{\b{K}_S}$.
	
	If $S$ is finite, we say that $S$ is an \emph{envelope} if there is $f\in \aemb(\ct^S, \ct)$ with $\tilde{f} = i_S$.
\end{defin}
\vspace{2 mm}

\begin{prop}
	\label{Prop:ComboEnvelope}
	Let $S\subseteq \omega$ be finite. Then $S$ is an envelope iff $S$ satisfies both of the following conditions.
	\begin{enumerate}
		\item 
		For any $m, n\in S$, we have $\ell(c(m)\wedge c(n))\in S$.
		\item 
		For each $m \leq \max(S)$, set $c[S]|_m = \{c(a)|_m: a\in S\setminus  m\}$. Then if $m\not\in S$, we have that $\pi_m\colon c[S]|_{m+1}\to T(m)$ is an age map.
	\end{enumerate} 
\end{prop}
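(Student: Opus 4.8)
The plan is to prove both directions of the equivalence by unwinding the definition of an aged embedding (Definition~\ref{Def:Embedding} and Definition~\ref{Def:AgedSetsEmbeddings}(7)) and checking which of its clauses are automatic for a map whose induced level map is forced to be $i_S$.

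First I would handle the forward direction. Suppose $S$ is an envelope, witnessed by $f\in\aemb(\ct^S,\ct)$ with $\tilde f = i_S$. For condition (1): if $m,n\in S$ with $i_S^{-1}(m) = a$, $i_S^{-1}(n) = b$, then $c^S(a)$ and $c^S(b)$ are coding nodes of $\ct^S$, and since $f$ preserves meets (clause 3 of Definition~\ref{Def:Embedding}) and sends coding nodes to coding nodes at the prescribed levels (clause 5), the level of $c(m)\wedge c(n)$ equals $\tilde f$ applied to the level of $c^S(a)\wedge c^S(b)$ in $T^S$; but more directly, $f(c^S(a)\wedge c^S(b)) = c(m)\wedge c(n)$ lies in $T(\ell)$ for $\ell = \tilde f(\ell(c^S(a)\wedge c^S(b)))$, and by clause (2) the image $f[T^S(j)]$ sits inside $T(i_S(j))$ only for $j<|S|$ — the point is that $\ell(c(m)\wedge c(n))$ must be a value of $i_S$, i.e.\ an element of $S$. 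This is the one slightly delicate bookkeeping point: the meet level of two coding nodes of $\ct$ lying in the range of $f$ is again the $\tilde f$-image of a level of $\ct^S$, hence in $S$. For condition (2): fix $m\le\max(S)$ with $m\notin S$. The map $f\restriction T^S(\text{appropriate level})$ is an age map into the correct level of $T$, and the nodes $\{c(a)|_m : a\in S\setminus m\}$ are exactly (the $f$-images of, up to the structure-theoretic bookkeeping) the restrictions of the relevant coding nodes of $\ct^S$ to the level corresponding to $m$. Since being an age map is about which $T(\cdot)$-labeled structures lie in $\c C$, and $f$ is an age map at every level, composing appropriately shows $\pi_m$ restricted to $c[S]|_{m+1}$ is an age map. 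Here I would invoke Proposition~\ref{Prop:PrimeAgeMap} and Proposition~\ref{Prop:ExtendAgeMap} to move between consecutive levels.

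For the converse, assume $S$ satisfies (1) and (2); I want to build $f\in\aemb(\ct^S,\ct)$ with $\tilde f = i_S$. Define $f$ on $T^S(<|S|)$ by sending $s\in T^S(j)$ to the node of $T(i_S(j))$ obtained from the data of where the coding nodes of $\ct^S$ go — concretely, $f$ should send $c^S(j)$ to $c(i_S(j))$ and more generally should be the unique meet-preserving, immediate-successor-respecting map determined by condition (1): condition (1) is exactly what guarantees that the tree generated by $\{c(a) : a\in S\}$ inside $T$, when we read off meet levels, has ``the same shape'' as $T^S(<|S|)$ so that clauses (1)--(4) of Definition~\ref{Def:Embedding} can be satisfied. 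Clause (5) holds by construction since $u^S(j) = u(i_S(j))$ from $\b K_S := \b K\cdot i_S$. The remaining content — that $f$ is actually an \emph{aged} embedding, i.e.\ an age map at every level — is exactly condition (2), which controls the levels $m\notin S$ (the levels in $S$ being handled because coding nodes of $\ct^S$ go to coding nodes of $\ct$ and $\b K_S$ is an induced substructure of $\b K$). I would iterate Proposition~\ref{Prop:PrimeAgeMap} and Proposition~\ref{Prop:ExtendAgeMap} level by level to promote the level-$m$ age-map conditions into the full statement that $f$ at each level is an age map.

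The main obstacle I expect is purely notational: carefully relating the abstract structure $\b K_S = \b K\cdot i_S$ and its coding tree $\ct^S$ to the concrete picture of the nodes $\{c(a) : a\in S\}\subseteq T$ and their restrictions and meets, so that the two combinatorial conditions translate precisely into the clauses of ``aged embedding.'' Condition (1) is the ``strong similarity skeleton'' and maps onto clauses (1)--(4) plus the requirement that meet levels land in $S$; condition (2) is precisely the age-map requirement at the non-$S$ levels. Once the dictionary is set up, both directions are routine applications of Propositions~\ref{Prop:PrimeAgeMap} and~\ref{Prop:ExtendAgeMap} and the definition of left density is not even needed here. I would present the argument as: (a) set up the correspondence between $f$ with $\tilde f = i_S$ and the node set $\{c(a):a\in S\}$; (b) show clauses (1)--(4) $\Leftrightarrow$ condition (1); (c) show ``age map at every level'' $\Leftrightarrow$ condition (2), using the two propositions to reduce to one level at a time.
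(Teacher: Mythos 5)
Your overall plan — unwind ``aged embedding,'' translate its clauses into conditions (1)--(2), and push everything through Propositions~\ref{Prop:PrimeAgeMap} and~\ref{Prop:ExtendAgeMap} — is the paper's approach, but two concrete steps in your sketch do not go through. In the forward direction for condition~(2), you claim the nodes $\{c(a)|_m : a\in S\setminus m\}$ are ``the $f$-images of'' restrictions of coding nodes of $\ct^S$. This is false exactly where it matters, namely $m\notin S$: the image levels of $f$ are $\ran{\tilde{f}}=S$, so a node at level $m\notin S$ is never a value of $f$. What you actually need is the prime map $f'$. For consecutive $n_0<n_1$ in $S$ with $n_1=i_S(j)$, one has $c[S]|_{n_1}=f[V]$ for some $V\subseteq T(j)$, and the restriction $\pi_{n_0+1}\colon c[S]|_{n_1}\to T(n_0+1)$ is precisely $f(v)\mapsto f'(v)$, i.e.\ the composition $f'\circ f^{-1}$, which is an age map because $f$ is and $f'$ is by Proposition~\ref{Prop:PrimeAgeMap}. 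Recovering condition~(2) at an arbitrary $m$ with $n_0<m<n_1$ from this one statement per consecutive pair is what the remark after the proposition supplies. Your ``composing appropriately'' elides both of these steps, and without the $f'$-step there is nothing available to compose.

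In the converse, you describe $f$ as ``the unique meet-preserving, immediate-successor-respecting map determined by condition~(1).'' No such unique map exists: the domain of $f$ is all of $T(<|S|)=k^{<|S|}$, while condition~(1) and the images of coding nodes determine $f$ only on a sparse meet-closed subtree. On the remaining nodes a choice must be made, and the choice is substantive: the paper sets $f(t^\frown i)=\Left(f(t)^\frown i, i_S(m))$ whenever no $c(n)$ with $n\in S$ extends $f(t)^\frown i$, and this ``go left'' default is exactly what Proposition~\ref{Prop:ExtendAgeMap}, resting on free amalgamation, is built to handle. Condition~(1) then enters not to single out the map but to show the ``extend toward $c(n)$'' clause is well defined when several $n\in S$ are eligible, since $\ell(c(n)\wedge c(n'))\in S$ forces those candidates to agree up to level $i_S(m)$. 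The outline is right, but the definition of $f$ has to be written down before the appeals to Propositions~\ref{Prop:PrimeAgeMap} and~\ref{Prop:ExtendAgeMap} carry any weight.
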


\begin{rem}
	Note that item 2 can be rephrased as follows: let $n_0 < n_1$ be consecutive elements of $S$. Then $\pi_{n_0} : c[S]|_{n_1}\to T(n_0+1)$ is an age map.
\end{rem}

\begin{proof}
	First assume $S$ is an envelope, as witnessed by $f\in \aemb(\ct^S, \ct)$. Item $1$ is clear. For item $2$, suppose $n_0 < n_1$ are consecutive elements of $S$. Then if $n_1 = i_S(m)$, we have $c[S]|_{n_1} = f[V]$ for some $V\subseteq T(m)$. The map $\pi_{n_0+1}\colon c[S]|_{n_1}\to T(n_0+1)$ then becomes the map $\pi_{n_0+1}\colon f[V]\to f'[V]$, so is injective. It follows from Proposition~\ref{Prop:PrimeAgeMap} that it is an age map. 
	
	Now assume that $S$ satisfies items $1$ and $2$. Define $f\colon T(<\!|S|)\to T$ as follows
	\begin{itemize}
		\item 
		We set $f(\emptyset) = c(i_S(0))$. Note that $\tilde{f}(0) = i_S(0)$.
		\item 
		Assume $f$ has been defined on $T(<m)$ for some $0 < m < |S|$, with $\tilde{f}(j) = i_S(j)$ for each $j< m$. If $t\in T(m-1)$ and $i< k$, we set
		\begin{align*}
		f(t^\frown i) = \begin{cases}
		c(n)|_{i_S(m)} \quad &\text{if $\exists n\in S$ with } c(n)\sqsupseteq f(t)^\frown i,\\
		\Left(f(t)^\frown i, i_S(m)) \quad &\text{otherwise.}
		\end{cases}
		\end{align*}
		Note that $\tilde{f}(m) = i_S(m)$.
	\end{itemize} 
	This is well defined by item 1. The fact that $\tilde{f} = i_S\in \emb(\b{K}_S, \b{K})$ has two useful consequences. First, we have $u^S(m) = u(\tilde{f}(m))$ for each $m < |S|$. Second, one can show by induction on $m< |S|$ that for every $n> m$, $t\in T(m)$, and $i< k$, we have that $c^S(n)\sqsupseteq t^\frown i$ iff $c(i_S(n))\sqsupseteq f(t)^\frown i$.

	We will show that $f\in \aemb(\ct^S, \ct)$ by induction on level. For $m = 0$, we have $f(c^S(0)) = f(\emptyset) = c(i_S(0))$, and both $(\{\emptyset\}, \c{C}^S)$ and $(\{c(i_S(0))\}, \c{C})$ are all of $\c{K}$. Suppose $f|_{T(<m)}\in \aemb(\ct^S|_m, \ct)$ for some $0 < m < |S|$. We first note that if $c^S(m) = t^\frown i$ for some $t\in T(m-1)$ and $i< k$, then $c(i_S(m)) \sqsupseteq f(t)^\frown i$. So by the definition of $f$, we have $f(c^S(m)) = c(i_S(m))$. To see that $f\colon (T(m), \c{C}^S)\to T(i_S(m))$ is an age map, we first use Proposition~\ref{Prop:PrimeAgeMap} to see that $f'\colon (T(m), \c{C}^S)\to T(i_S(m-1)+1)$ is an age map. Then since $S$ satisfies item 2, we must have that 
	$$\pi_{i_S(m-1)+1}\colon \{c(n)|_{i_S(m)}: n\in S, n\geq i_S(m)\}\to T(i_S(m-1)+1)$$
	is an age map. We use Proposition~\ref{Prop:ExtendAgeMap} to conclude that $f\colon (T(m), \c{C}^S)\to T(i_S(m))$ is an age map as desired.
\end{proof}
\vspace{2 mm}

\begin{cor}
	\label{Cor:EnvelopeImages}
	Suppose $S\subseteq \omega$ is finite and $h\in \aemb(\ct, \ct)$. Then $S$ is an envelope iff $\tilde{h}[S]$ is an envelope.
\end{cor}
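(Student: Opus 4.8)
The plan is to reduce to the combinatorial description of envelopes in Proposition~\ref{Prop:ComboEnvelope}: I will show that a finite $S\subseteq\omega$ satisfies conditions (1) and (2) there if and only if $\tilde h[S]$ does. The only facts about an aged embedding $h\in\aemb(\ct,\ct)$ I need are routine consequences of Definitions~\ref{Def:Embedding} and~\ref{Def:AgedSetsEmbeddings}: $\tilde h$ is strictly increasing, hence restricts to an increasing bijection $S\to\tilde h[S]$; $h$ is injective and preserves meets, hence $\sqsubseteq$, so $h(t|_j)=h(t)|_{\tilde h(j)}$ whenever $t\in T$ and $j\le\ell(t)$; $h$ takes level-$j$ nodes to level-$\tilde h(j)$ nodes; $h(c(a))=c(\tilde h(a))$ and $u(a)=u(\tilde h(a))$ for all $a$; each $h|_{T(j)}$ is an age map; and, applying Proposition~\ref{Prop:PrimeAgeMap} to the restriction of $h$ to its first $j{+}1$ levels, the prime map $(h|_{T(j)})'\colon T(j{+}1)\to T(\tilde h(j){+}1)$ is an age map. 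One direction of the corollary is then immediate: if $f\in\aemb(\ct^S,\ct)$ witnesses that $S$ is an envelope, then since $i_{\tilde h[S]}=\tilde h\circ i_S$ and $\tilde h$ is an embedding $\b{K}\to\b{K}$ one has $\b{K}_{\tilde h[S]}=\b{K}_S$, hence $\ct^{\tilde h[S]}=\ct^S$, so the composite $h\circ f$ lies in $\aemb(\ct^{\tilde h[S]},\ct)$ and has induced map $\tilde h\circ\tilde f=\tilde h\circ i_S=i_{\tilde h[S]}$, witnessing that $\tilde h[S]$ is an envelope.

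For the reverse direction I would verify conditions (1) and (2) for $S$ from those for $\tilde h[S]$. Condition (1): for $m,n\in S$, meet-preservation and $h(c(a))=c(\tilde h(a))$ give $c(\tilde h(m))\wedge c(\tilde h(n))=h(c(m)\wedge c(n))$, which lies at level $\tilde h(\ell(c(m)\wedge c(n)))$; since $\tilde h$ is injective, $\ell(c(m)\wedge c(n))\in S$ iff its image lies in $\tilde h[S]$. Condition (2): I would use the reduction to consecutive pairs from the Remark after Proposition~\ref{Prop:ComboEnvelope}, noting that the consecutive pairs of $\tilde h[S]$ are exactly the $\tilde h$-images of those of $S$. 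Fixing consecutive $n_0<n_1$ in $S$, the identities above yield $c[\tilde h[S]]|_{\tilde h(n_1)}=h\big[c[S]|_{n_1}\big]$, and applying item~4 of Definition~\ref{Def:Embedding} yields $h(t)|_{\tilde h(n_0)+1}=(h|_{T(n_0)})'(t|_{n_0+1})$ for $t\in T(n_1)$; together these say that the square
$$\pi_{\tilde h(n_0)+1}\circ h=(h|_{T(n_0)})'\circ\pi_{n_0+1}$$
commutes on $c[S]|_{n_1}$, in which $h\colon c[S]|_{n_1}\to c[\tilde h[S]]|_{\tilde h(n_1)}$ is a bijective age map and $(h|_{T(n_0)})'$ is an injective age map. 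Since composites and inverses of age maps are age maps, the square shows that the restriction of $\pi_{n_0+1}$ to $c[S]|_{n_1}$ is an age map iff the restriction of $\pi_{\tilde h(n_0)+1}$ to $c[\tilde h[S]]|_{\tilde h(n_1)}$ is. Combining this with condition (1), $S$ is an envelope iff $\tilde h[S]$ is.

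The main obstacle is the bookkeeping inside condition (2): verifying that the displayed square really commutes on $c[S]|_{n_1}$, and keeping careful track of which class of $\c{L}$-structures labels each level set (the induced classes on $c[S]|_{n_1}$, on $c[\tilde h[S]]|_{\tilde h(n_1)}$, and on the codomain $T(\tilde h(n_0)+1)$), together with checking that the consecutive-pairs reduction of the Remark applies on both sides (including the levels below $\min S$). Everything else is a direct unwinding of the preservation properties of $h$ recorded at the outset.
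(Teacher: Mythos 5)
Your proposal is correct and follows essentially the same route as the paper: the forward direction composes the witnessing aged embedding with $h$ (using $\ct^{\tilde h[S]}=\ct^S$), and the reverse direction verifies the two conditions of Proposition~\ref{Prop:ComboEnvelope}, with item~1 via meet-preservation and item~2 via the commuting square $\pi_{\tilde h(n_0)+1}\circ h=h'\circ\pi_{n_0+1}$ relating the age maps $h|_{T(n_1)}$ and $h'$ on consecutive pairs. Your write-up is more explicit about the bookkeeping, but the decomposition and key identities are the ones the paper uses.
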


\begin{proof}
	If $S\subseteq \omega$ is an envelope as witnessed by $f\in \aemb(\ct^S, \ct)$, then $\tilde{h}[S]$ is an envelope as witnessed by $h\cdot f$, where we note that $\ct^{\tilde{h}[S]} = \ct^S$. Conversely, suppose $\tilde{h}[S]$ satisfies items $1$ and $2$ from Proposition~\ref{Prop:ComboEnvelope}. Then $S$ satisfies item 1 since $h$ respects meets. For item 2, let $n_0< n_1$ be consecutive members of $S$. We consider the age maps $h\colon T(n_1)\to T(\tilde{h}(n_1))$ and $h'\colon T(n_0+1)\to T(\tilde{h}(n_0)+1)$ and note that $h' \circ \pi_{n_0+1} = \pi_{\tilde{h}(n_0)+1} \circ h$. It follows that $\pi_{n_0+1}: c[S]_{n_1}\to T(n_0+1)$ must be an age map.
\end{proof}
\vspace{2 mm}

\begin{defin}
	\label{Def:Closure}
	Let $S\subseteq \omega$ be finite. The \emph{closure} of $S$, denoted $\overline{S}$, is the smallest envelope containing $S$, equivalently the intersection of all envelopes of $S$.
	
	Suppose $S\subseteq \omega$ is an envelope. The \emph{interior} of $S$, denoted $\Int(S)$, is the smallest subset of $S$ with $\overline{\Int(S)} = S$. 
\end{defin}
\vspace{2 mm}

As an immediate consequence of Corollary~\ref{Cor:EnvelopeImages}, we see that if $h\in \aemb(\ct, \ct)$ and $S\subseteq \omega$ is finite, then $\overline{\tilde{h}[S]} = \tilde{h}\left[\overline{S}\right]$. If $S$ is an envelope, then $\Int(\tilde{h}[S]) = \tilde{h}[\Int(S)]$.  
\vspace{2 mm}

\begin{rem}
	Given a finite $S\subseteq \omega$, Proposition~\ref{Prop:ComboEnvelope} gives us the following ``top-down" method of computing $\overline{S}$. Start by setting $S_{\max(S)} = \{\max(S)\}$. If $m< \max(S)$ and $S_{m+1}\subseteq \omega$ has been determined, then we set $S_m = S_{m+1}\cup \{m\}$ if any of the following hold:
	\begin{itemize}
		\item 
		$m\in S$.
		\item 
		There are $n_0, n_1\in S_{m+1}$ with $m = \ell(c(n_0)\wedge c(n_1))$.
		\item 
		The map $\pi_m\colon c[S_{m+1}]|_{m+1}\to T(m)$ is not an age map.
	\end{itemize} 
	If none of the above hold, we set $S_m = S_{m+1}$. Then $S_0 = \overline{S}$.
	
	Similarly, if $S\subseteq \omega$ is an envelope, then we have the following ``top-down" method of computing $\Int(S)$. Start by setting set $\Int(S)_0 = \{\max(S)\}$, and if $\Int(S)_n$ has been determined, set $\Int(S)_{n+1} = \Int(S)_n\cup \{\max\left(S\setminus \overline{\Int(S)_n}\right)\}$. Then $\Int(S)_n$ eventually stabilizes, and $\Int(S) = \Int(S)_n$ for any large enough $n$.
\end{rem}
\vspace{2 mm}

\begin{exa}
\label{Exa:UnboundedEnvelopes}	
	While $\overline{S}$ exists and is finite, we note that there is not a uniform bound on the size of the closure. Let $\c{K}$ be the class of finite triangle-free graphs. Set $n_0 = 0\in \b{K}$. If $n_0<\cdots < n_{m-1}$ have been determined, let $n_m\in \b{K}$ be any vertex with $n_m> n_{m-1}$, $R(n_m, n_{m-1}) = 1$, and $R(n_m, r) = 0$ for any $r< n_m$. Note that $c(n_m)|_{n_{m-1}} = 0^{n_{m-1}}$ and $c(n_m)|_{n_{m-1}+1} = (0^{n_{m-1}})^\frown 1$. In particular, the map $\pi_{n_{m-1}}\colon \{c(n_m)|_{n_{m-1}+1}\}\to \{c(n_m)|_{n_{m-1}}\}$ is not an age map. It follows that $\overline{\{n_m\}} = \{n_i: i\leq m\}$, and thus the closures of singletons can be arbitrarily large.
\end{exa}
\vspace{2 mm}

\begin{theorem}
	\label{Thm:EnvelopesImplyDegrees}
	Suppose there is $\eta\in \oemb(\b{K}, \b{K})$ which satisfies all of the following:
	\begin{enumerate}
		\item 
		For every $n < \omega$, there is $D_n< \omega$ so that every $S\subseteq \eta[\b{K}]$ with $|S| \leq n$ has $|\overline{S}|\leq D_n$,
		\item 
		For every finite $S\subseteq \eta[\b{K}]$, we have $S = \Int(\overline{S})$.
		\end{enumerate} 
	Then the ordered Ramsey degree in $\b{K}$ of any enumerated $\b{A}\in \c{K}$ is at most $$\ell := |\{\b{B}\in \c{K}: \b{B}\text{ has underlying set } d\text{ for some }d\leq D_{|\b{A}|}\}|.$$
\end{theorem}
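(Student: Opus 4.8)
The plan is to fix an enumerated $\b{A}\in \c{K}$ with $|\b{A}| = a$, a coloring $\chi\colon \oemb(\b{A}, \b{K})\to r$, and to produce $\theta\in \oemb(\b{K}, \b{K})$ witnessing that at most $\ell$ colors are seen. The key idea is to use $\eta$ from the hypothesis together with the Ramsey theorem for aged embeddings (Theorem~\ref{Thm:Milliken}) to reduce the complexity of $\chi$ down to something controlled by the combinatorial type of envelopes, and then bound the number of such types by $\ell$. First I would replace $\b{K}$ by $\ran{\eta}$: since $\eta\in \oemb(\b{K}, \b{K})$ and composing with it only shrinks the image of $\chi$, it suffices to find the required self-embedding inside $\ran{\eta}$. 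So from now on every finite subset we consider satisfies conditions (1) and (2) of the hypothesis.

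Next I would pass from embeddings of $\b{A}$ to aged embeddings of coding trees. Given $g\in \oemb(\b{A}, \b{K})$, its image $\ran{g}\subseteq\omega$ has size $a$, so $|\overline{\ran{g}}|\leq D_a$, and $\overline{\ran{g}}$ is an envelope, hence carries some $f_g\in\aemb(\ct^{\overline{\ran g}},\ct)$ with $\tilde f_g = i_{\overline{\ran g}}$. Moreover condition (2) says $\ran g$ is \emph{not} accidentally interior-trivial: we have $\Int(\overline{\ran g}) = \ran g$, which pins down $\ran g$ inside its envelope canonically. The plan is therefore to define an auxiliary coloring on aged embeddings: for each enumerated structure $\b{B}\in\c{K}$ on some $d\leq D_a$, and each $f\in\aemb(\ct^\b{B},\ct)$, set $\xi(\b{B},f)$ to record the color $\chi(g)$ whenever $\ran{\tilde f}$ is an envelope with interior $\ran g$ and $\b{B}[\tilde f^{-1}]\cong\text{(the appropriate structure)}$—and a dummy value otherwise. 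Applying Theorem~\ref{Thm:Milliken} successively over the finitely many isomorphism types of such $\b{B}$ (finitely many because there are finitely many structures on each $d\leq D_a$, bounding the whole process), I obtain a single $h\in\aemb(\ct,\ct)$ so that $\chi\cdot\tilde h\cdot g$ depends only on the pair $(\b{B}_g, \text{combinatorial type of }\overline{\ran g}\text{ relative to its canonical copy of }\b{A})$—but after composing with $h$, the envelope of any $g$ in the image is the $h$-image of an envelope, and by Corollary~\ref{Cor:EnvelopeImages} and the remark following it, $\overline{\tilde h[\ran g]} = \tilde h[\overline{\ran g}]$ and $\Int$ is preserved, so the structure $\b{K}\cdot i_{\overline{\tilde h[\ran g]}}\in\c{K}$ has underlying set of size $|\overline{\ran g}|\leq D_a$. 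Thus the color $\chi\cdot\tilde h(g)$ is determined by this isomorphism type of a structure in $\c{K}$ on a set of size $\leq D_a$, of which there are exactly $\ell$. Setting $\theta = \tilde h$ restricted appropriately (noting $\tilde h\in\oemb(\b{K},\b{K})$ since aged embeddings induce ordered embeddings) finishes the argument.

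The main obstacle is bookkeeping the reduction cleanly: one must check that the "canonical" way of recovering $g$ from its envelope really is $\aemb$-equivariant, i.e.\ that an aged embedding $h$ carries the envelope/interior data of $\ran g$ to the envelope/interior data of $\tilde h[\ran g]$ compatibly with the induced embedding $\tilde h\cdot g$—this is exactly what Corollary~\ref{Cor:EnvelopeImages} and the subsequent remark are for, but one has to verify that the finite labeled structure $\ct^{\overline{\ran g}}$ together with the marked positions of $\ran g$ is genuinely invariant, so that the coloring $\xi$ is well-defined and Theorem~\ref{Thm:Milliken} can be applied with a \emph{single} target color per aged embedding. A secondary subtlety is that $\aemb(\ct^\b{B},\ct)$ may contain aged embeddings $f$ whose image $\ran{\tilde f}$ fails to be an envelope, or is an envelope but with $\Int\neq$ the expected set; these get the dummy color, and one checks that after applying $h$ the embeddings actually arising from some $g\in\oemb(\b{A},\b{K})$ in $\ran{\tilde h}$ never land in the dummy class, thanks to hypothesis (1) and (2) applied to $\ran g\subseteq\ran\eta$. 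Once these equivariance points are nailed down, the final count is immediate from the definition of $\ell$.
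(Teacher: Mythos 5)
Your proposal is correct and is essentially the paper's proof: define an auxiliary coloring $\xi_\b{B}$ on $\aemb(\ct^\b{B},\ct)$ for each small enumerated $\b{B}\in\c{K}$ by reading off $\chi$ on the interior of $\tilde g[\b{B}]$, apply Theorem~\ref{Thm:Milliken} finitely many times to get a single $h$, and use hypotheses (1)--(2) together with Corollary~\ref{Cor:EnvelopeImages} (and the remark after it) to conclude that the color depends only on the isomorphism type of the envelope. The only phrasing worth tightening is the final composition: the paper takes $\tilde h\cdot\eta$ (apply $\eta$ first so that $\ran{\eta\cdot f}\subseteq\ran\eta$, then $\tilde h$ for monochromaticity), rather than a ``restriction'' of $\tilde h$.
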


\begin{proof}
	Fix $\b{A}\in \c{K}$ an enumerated structure, and let $\chi\colon \oemb(\b{A}, \b{K})\to r$ be a coloring for some $r< \omega$. Let $\b{B}\in \c{K}$ be any enumerated structure. We define a coloring $\xi_\b{B}\colon \aemb(\ct^\b{B}, \ct)\to r$ as follows. Fix $g\in \aemb(\ct^\b{B}, \ct)$. If there is $f\in \oemb(\b{A}, \b{K})$ with $f[ \b{A}] = \Int(\tilde{g}[\b{B}])$, we set $\xi_\b{B}(g) = \chi(f)$. If there is no such $f$, choose $\xi_\b{B}(g)$ arbitrarily. Use Theorem~\ref{Thm:Milliken} repeatedly to find $h\in \aemb(\ct, \ct)$ so that $\xi_\b{B}\cdot h$ is monochromatic, say with color $j_\b{B}< r$, for every enumerated $\b{B}\in \c{K}$ of size at most $D_{|\b{A}|}$. Then $\tilde{h}\circ \eta\in \oemb(\b{K}, \b{K})$, and we claim that $|\chi\cdot (\tilde{h}\circ \eta)[\oemb(\b{A}, \b{K})]| \leq \ell$. 
	To see this, fix $f\in \oemb(\b{A}, \b{K})$. Then by assumption, we have $|\overline{\eta\circ f[\b{A}]}|\leq D_{|\b{A}|}$ and $\Int(\overline{\eta\circ f[\b{A}]}) = \eta\circ f[\b{A}]$. Setting $S = \overline{\eta\circ f[\b{A}]}$ and $\b{B} = \b{K}_S = \b{K}_{\tilde{h}[S]}$, find $g\in \aemb(\ct^\b{B}, \ct)$ with $\tilde{g} = i_S$. Then we have $\xi_\b{B}(h\circ g) = j_\b{B} = \chi(\tilde{h}\circ \eta\circ f)$.
\end{proof}
\vspace{2 mm}

\section{Ramsey theorems for structures}
\label{Sec:RamseyStructure}

The goal of this section is to show that the assumptions of Theorem~\ref{Thm:EnvelopesImplyDegrees} hold. To do this, we need a finer analysis of how one constructs $\overline{S}$ from a finite $S\subseteq \omega$. This leads to the notion of the \emph{critical set} of $S$; denoted $\crit(S)$, these are the members of $\overline{S}$ which are ``immediately required" to be in $\overline{S}$. Crucially, we will show in Proposition~\ref{Prop:BoundedCrit} that $|\crit(S)|$ is bounded by a function of $|S|$. We then finish the section by finding $\eta\in \oemb(\b{K}, \b{K})$ where the closures of finite $S\subseteq \eta[\b{K}]$ are not that much bigger than $\crit(S)$. 

Recall that an $\c{L}$-structure $\b{F}$ is \emph{irreducible} if $R^\b{F}(a, b)\neq 0$ for every $a\neq b\in \b{F}$. Throughout this section, we will assume that $\c{K} = \mathrm{Forb}(\c{F})$, where $\c{F}$ is a \emph{finite} set of finite irreducible $\c{L}$-structures. We let 
\begin{align*}
\Irr(\c{K}) =\,\, &\{\b{A}\in \c{K}: \b{A} \text{ is enumerated and embeds into some member of } \c{F}\}\\[1 mm] 
\cup &\{\b{A}\in \c{K}: \b{A} \text{ is enumerated and } |\b{A}| = 1\}.
\end{align*}

\begin{defin}
	\label{Def:Critical}
	Let $S\subseteq \omega$ be finite.
	\begin{enumerate}
		\item 
		The \emph{splitting set} of $S$, denoted $\sp(S)$ is the set of $m< \omega$ for which $\pi_m\colon c[S]|_{m+1}\to T(m)$ is not injective.
		\item
		The \emph{age change set} of $S$, denoted $\ac(S)$, is the set of $m<\omega$ with $m\not\in S\cup \sp(S)$ and for which $\pi_m\colon c[S]|_{m+1}\to T(m)$ is not an age map.
		\item
		The \emph{critical set} of $S$ is $\crit(S):= S\cup \sp(S)\cup\ac(S)$.
	\end{enumerate}
\end{defin} 
\vspace{2 mm}

We note that $\crit(S)\subseteq \overline{S}$; one can think of $\crit(S)$ as those $m< \max(S)$ which are \emph{immediately} required to be in $\overline{S}$. In fact, $\overline{S}$ is just the smallest supserset of $S$ with $\crit(\overline{S}) = \overline{S}$.
\vspace{2 mm}

\begin{exa}
	\label{Exa:CritFor3Free}
	We revisit Example~\ref{Exa:UnboundedEnvelopes}; with $n_0<\cdots < n_m\in \b{K}$ as defined there, we have $\crit(\{n_m\}) = \{n_{m-1}, n_m\}$ and $\ac(\{n_m\}) = \{n_{m-1}\}$.
\end{exa}
\vspace{2 mm}

\begin{prop}
	\label{Prop:BoundedCrit}
	Suppose $S\subseteq \omega$ is finite. Then $|\crit(S)|$ is bounded by a function of $|S|$.
\end{prop}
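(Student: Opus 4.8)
The plan is to bound $|\crit(S)|$ by bounding $|\sp(S)|$ and $|\ac(S)|$ separately in terms of $|S|$, since $\crit(S) = \sp(S) \sqcup \ac(S)$. For the splitting levels, I would argue as in the classical theory of trees: a level $m \in \sp(S)$ is one at which the map $\pi_m\colon c[S]|_{m+1} \to T(m)$ fails to be injective, i.e.\ two of the strings $c(a)|_{m+1}$ (for $a \in S$, $a \geq m+1$) that were distinct at height $m+1$ become equal when restricted to height $m$ — so $m$ is a meet level $\ell(c(a) \wedge c(b))$ for some $a, b \in S$. The number of distinct meet levels of a set of at most $|S|$ strings is at most $|S| - 1$, so $|\sp(S)| \leq |S| - 1$.

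The harder part is bounding $|\ac(S)|$: these are levels $m$ where $\pi_m$ is injective but fails to be an age map, meaning there is some $T(m+1)$-labeled structure $(\b{B}, \phi)$ with $\b{B}[\phi] \in \c{K}$ but $\b{B}[\pi_m \circ \phi] \notin \c{K}$ (or the converse, though in the free-amalgamation setting the problematic direction is the forward one: collapsing a level can create a forbidden configuration). Since $\c{K} = \mathrm{Forb}(\c{F})$ with $\c{F}$ finite and each member of $\c{F}$ of bounded size, a failure of the age-map property at level $m$ is "witnessed" by a copy of some $\b{F} \in \c{F}$ that sits inside $\b{B}[\pi_m \circ \phi]$ but not inside $\b{B}[\phi]$; the part of this witness lying above $m$ is a bounded-size subset of the strings $\{c(a)|_{m+1} : a \in S\}$, and the collapse at level $m$ identifies the height-$m$ truncations of (at least two of) these strings and/or changes the relation pattern between them and the coding node at level $m$. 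So I would attach to each $m \in \ac(S)$ a "witnessing pattern": a bounded-size tuple of elements of $S$ (those indexing the strings involved in the witness, together with the relevant coding-node index if present), decorated with the isomorphism type of the relevant bounded-size substructure. The key claim is then that distinct $m \in \ac(S)$ receive distinct witnessing patterns — or at least that each pattern is reused only boundedly often — so that $|\ac(S)|$ is bounded by (number of bounded-size tuples from $S$) $\times$ (number of bounded-size decorations), both of which are functions of $|S|$ alone since the size bound comes from $\c{F}$.

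The main obstacle I anticipate is establishing that a single witnessing pattern cannot recur at too many levels $m$: a priori the same pair of strings from $S$ could, in principle, be responsible for age changes at several heights between their meet and the top. I would handle this by arguing that once the relevant strings and the relevant forbidden structure $\b{F}$ are fixed, the height $m$ at which the collapse $\pi_m$ first introduces that copy of $\b{F}$ is essentially determined — it is the least height at which all the "missing" relations in $\b{F}$ become forced by truncation — so at most one (or boundedly many, indexed by the finitely many forbidden structures and finitely many substructures thereof) level per pattern. Care is needed because "age change" could also go the other direction, but the free-amalgamation structure of $\c{K}$ (Propositions~\ref{Prop:PrimeAgeMap} and \ref{Prop:ExtendAgeMap}) rules out the backward direction once one already knows $\pi_m$ is injective, so only the forbidden-substructure analysis is genuinely needed. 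Putting the two bounds together gives $|\crit(S)| \leq (|S|-1) + c(|S|)$ for an explicit function $c$ depending only on $\max_{\b{F} \in \c{F}} |\b{F}|$, which is itself a constant.
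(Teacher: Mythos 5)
You have the paper's high-level plan exactly right: split $\crit(S)$ into $\sp(S)$ and $\ac(S)$, observe $|\sp(S)| < |S|$ by counting meet levels, and bound $|\ac(S)|$ by injecting it into a set of ``witnessing patterns'' whose size is a function of $|S|$ alone. The difficulty is in the second half, and there you have the direction of the failure of the age-map property consistently reversed, which undermines the rest of the sketch.

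Recall that $\b{B}[\pi_m\circ\phi]$ is (by definition) an \emph{induced substructure} of $\b{B}[\phi]$, on $\b{B}\cup\{0,\dots,m-1\}$ rather than $\b{B}\cup\{0,\dots,m\}$. Since $\c{K}$ is hereditary, $\b{B}[\phi]\in\c{K}$ always implies $\b{B}[\pi_m\circ\phi]\in\c{K}$; that direction can never fail. The direction that \emph{can} fail is the converse: $\b{B}[\pi_m\circ\phi]\in\c{K}$ but $\b{B}[\phi]\notin\c{K}$. You say the witness ``sits inside $\b{B}[\pi_m\circ\phi]$ but not inside $\b{B}[\phi]$'' and that ``collapsing a level can create a forbidden configuration'' — both statements are backwards, and the sentence attributing the easy direction to free amalgamation rather than to heredity confirms the mix-up persists through the whole paragraph. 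This is not merely cosmetic: the correct direction tells you that the copy of $\b{F}\in\c{F}$ lives in $\b{B}[\phi]$ and \emph{must contain the vertex $m$} (otherwise it would survive into $\b{B}[\pi_m\circ\phi]$), and since $\b{F}$ is irreducible, every other vertex of the copy is non-trivially related to $m$. That observation is the crux: it forces the ``lower part'' of the copy to be an enumerated member of $\Irr(\c{K})$ with top vertex $m$, which is exactly what makes the encoding finite. Your ``if present'' hedge about the coding-node index, and the appeal to ``missing relations becoming forced by truncation,'' suggest you haven't yet seen that $m$ is always there and that the irreducibility of $\b{F}$ is what ties everything to $m$.

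The injectivity argument is also not pinned down. ``The height $m$ is essentially determined'' is the right conclusion, but the claim that it is ``the least height at which the missing relations become forced'' doesn't survive scrutiny when the lower part of the witnessing copy has more than one vertex, since its positions in $\{0,\dots,m\}$ vary with $m$ and are unbounded, so they cannot themselves be recorded in the pattern. The paper's pattern records, for each coding node in $S$ above $m$, the restriction of that node to the \emph{positions} of the lower part (the function $\rho^m$), together with the isomorphism type of the lower part as an enumerated structure. Injectivity is then proved by contradiction: if $m<n$ had the same pattern, the truncation $\b{B}^n[\pi_{m+1}\cdot\phi^n]$ — which must lie in $\c{K}$ because it is an induced substructure of $\b{B}^n[\pi_n\cdot\phi^n]\in\c{K}$ — would already contain a copy of $\b{F}^n$ on $\ran{f^m}\cup\b{B}^n$. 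A version of this contradiction could also be run with a pattern closer to yours (label set in $S$ plus the marked isomorphism type of the full witnessing copy), but either way it is this by-contradiction swap of lower parts that does the work, not a minimality-of-$m$ heuristic; and to even set it up you need the forbidden copy on the correct side of the truncation.
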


\begin{proof}
	We first note that $|\sp(S)| < |S|$, so we focus on $\ac(S)$. We will build an injection $\Theta$ of $\ac(S)$ into some finite set whose size only depends on $|S|$. 
	
	Fix $m\in \ac(S)$. Choose a $c[S]|_{m+1}$-labeled $\c{L}$-structure $(\b{B}^m, \phi^m)$ with $\b{B}^m[\pi_m\circ \phi^m]\in \c{K}$, but $\b{B}^m[\phi^m]\not\in \c{K}$, and which is minimal with this property. This means that we can choose $\b{F}^m\in \c{F}$ and $g^m\in \emb(\b{F}^m, \b{B}^m[\phi^m])$ with $\b{B}^m\cup\{m\}\subseteq \im{g^m}$. Let $\b{I}^m\in \Irr(\c{K})$ and $f^m\in \oemb(\b{I}^m, \b{K})$ be such that $\im{f^m}\cup \b{B}^m = \im{g^m}$. In particular, $f^m(|\b{I}^m|-1) = m$.
	
	Define $\rho^m\colon c[S]\cap T({>}m)\to T(|\b{I}^m|)$ via $\rho^m(s)(i) = s(f^m(i))$ for $s\in c[S]\cap T({>}m)$ and $i< |\b{I}^m|$. We set $$\Theta(m) = (\b{I}^m, \rho^m)$$ 
	and argue that $\Theta$ is an injection on $\ac(S)$. Towards a contradiction, suppose $m < n\in \ac(S)$ satisfied $\Theta(m) = \Theta(n)$. Then we must have $c[S]\cap T({>}m) = c[S]\cap T({>}n)$. Considering $n$, we have that $\b{B}^n[\phi^n]\not\in \c{K}$ and $\b{B}^n[\pi_n\circ \phi^n]\in \c{K}$. So also $\b{B}^n[\pi_{m+1}\circ \phi^n]\in \c{K}$. But now consider the induced substructure of $\b{B}^n[\pi_{m+1}\circ \phi^n]$ on the set $\im{f^m}\cup \b{B}^n$; since $(\b{I}^m, \rho^m) = (\b{I}^n, \rho^n)$, we have $\im{f^m}\cup \b{B}^n\cong \b{F}^n$, a contradiction.
	
	We conclude by observing that the range of $\Theta$ has size at most
	$$\sum_{\b{I}\in \Irr(\c{K})}\sum_{j < |S|} |T(\b{I})|^{j+1}.\qedhere$$ 
\end{proof}
\vspace{2 mm}

The remainder of this section is spent showing that for $\c{K} = \mathrm{Forb}(\c{F})$, the assumptions of Theorem~\ref{Thm:EnvelopesImplyDegrees} are satisfied. When constructing the $\eta\in \oemb(\b{K}, \b{K})$ appearing in the statement of the theorem, we will control the sizes of closures of finite $S\subseteq \eta[\b{K}]$ by keeping control over $\crit(S)$. In particular, we want to ensure that the members of $\sp(S)$ and $\ac(S)$ appear in a controlled fashion in $\b{K}\setminus \eta[\b{K}]$. To actually construct $\eta$, we will first build a countable structure $\b{Y}$ with $\age{\b{Y}}\subseteq \c{K}$ which contains $\b{K}$. Even though $\b{Y}$ will not technically be an enumerated structure (its underlying set will properly contain $\omega$), we will equip $\b{Y}$ with a linear order $<_\b{Y}$ of order type $\omega$, allowing us to refer to ordered embeddings. Then we will take any $\eta\in \oemb(\b{Y}, \b{K})$ which satisfies the following lemma and simply restrict the domain to $\b{K}$. 
\vspace{2 mm}

\begin{lemma}
	\label{Lem:NiceEmbedding}
	Let $\b{Y}$ be a structure with $\age{\b{Y}}\subseteq \c{K}$, and let $<_\b{Y}$ be a linear order of $\b{Y}$ in order type $\omega$. Then there is $\eta\in \oemb(\b{Y}, \b{K})$ so that whenever $y\in \b{Y}$, $m< \eta(y)$ and $R(\eta(y), m)\neq 0$, then $m\in \eta[\b{Y}]$.
\end{lemma}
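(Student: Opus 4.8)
The plan is to build $\eta$ by recursion along the $<_\b{Y}$-enumeration $\b{Y} = \{y_0 <_\b{Y} y_1 <_\b{Y}\cdots\}$, placing each image far to the right and invoking left density of $\b{K}$ at every stage. Suppose $\eta$ has been defined on $\{y_0,\ldots,y_{n-1}\}$ so that $i\mapsto a_i := \eta(y_i)$ is an ordered embedding into $\b{K}$ with $a_0 < \cdots < a_{n-1}$, and so that for each $i < n$ every $r < a_i$ with $R^\b{K}(r, a_i)\neq 0$ already lies in $\{a_0,\ldots,a_{i-1}\}$. This last property, once it holds at a stage, is never violated later (all subsequent images lie above $a_{n-1}$), so it yields precisely the conclusion of the lemma for the final $\eta$. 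Hence it suffices, at stage $n$, to produce $b > a_{n-1}$ with $U^\b{K}(b) = U^{\b{Y}}(y_n)$, with $R^\b{K}(a_i, b) = R^{\b{Y}}(y_i, y_n)$ for all $i < n$, and with $R^\b{K}(r, b) = 0$ for every $r < b$ not of the form $a_i$; we then set $\eta(y_n) := b$.

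To obtain such a $b$, put $m := a_{n-1}+1$ (and $m := 0$ when $n = 0$) and let $\b{A}$ be the $\c{L}$-structure on $m+1$ with $\b{A}_m = \b{K}_m$, with $U^\b{A}(m) = U^{\b{Y}}(y_n)$, with $R^\b{A}(a_i, m) = R^{\b{Y}}(y_i, y_n)$ for $i < n$, and with $R^\b{A}(j, m) = 0$ for every $j < m$ not of the form $a_i$. Granting $\b{A}\in\c{K}$, left density of $\b{K}$ supplies $b\geq m$ with $\b{K}(0,\ldots,m-1,b) = \b{A}$ and with $R^\b{K}(r,b) = 0$ for every $r\in[m, b-1]$; unwinding the definitions shows this $b$ has all the required features — in particular $b\geq m > a_{n-1}$, and the relations between $b$ and the points below it are exactly as prescribed (coming from $\b{A}$ on $[0,m)$ and trivial on $[m, b)$). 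One checks along the way that $i\mapsto a_i$ together with $y_n\mapsto b$ remains an ordered embedding. The base case $n = 0$ is covered by the same computation, with $\b{A}$ a singleton of unary type $U^{\b{Y}}(y_0)$.

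The one substantive point — and the only place the hypotheses $\c{K} = \mathrm{Forb}(\c{F})$ with $\c{F}$ a set of \emph{irreducible} structures enter — is verifying $\b{A}\in\c{K}$, i.e.\ that no $\b{F}_0\in\c{F}$ embeds into $\b{A}$. If $g\colon \b{F}_0\hookrightarrow\b{A}$ is such an embedding and $m\notin\ran{g}$, then $g$ embeds $\b{F}_0$ into $\b{A}_m = \b{K}_m$, contradicting $\age{\b{K}} = \c{K}$. So $m = g(z_0)$ for some $z_0\in\b{F}_0$; irreducibility of $\b{F}_0$ forces every other point of $\b{F}_0$ to be nontrivially related to $z_0$, hence (by the choice of relations at $m$) to be sent into $\{a_0,\ldots,a_{n-1}\}$, so $\ran{g}\subseteq\{a_0,\ldots,a_{n-1}, m\}$. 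But the substructure of $\b{A}$ on $\{a_0,\ldots,a_{n-1}, m\}$ is isomorphic to $\b{Y}(y_0,\ldots,y_n)$ via $a_i\mapsto y_i$, $m\mapsto y_n$: the relations among the $a_i$ match because $i\mapsto a_i$ is already an embedding of $\b{Y}(y_0,\ldots,y_{n-1})$ into $\b{K}$, and the relations to $m$ were defined to match those to $y_n$. Since $\b{Y}(y_0,\ldots,y_n)\in\age{\b{Y}}\subseteq\c{K} = \mathrm{Forb}(\c{F})$, it does not embed $\b{F}_0$, a contradiction. This completes the recursion, and the resulting $\eta$ is the desired ordered embedding. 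I expect this last verification to be the crux; everything else is bookkeeping around left density.
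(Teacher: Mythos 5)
Your proof is correct and follows exactly the approach the paper intends: the paper's proof is the one-liner ``build $\eta$ inductively in $<_\b{Y}$-order, using left density to ensure the extra condition,'' and you have filled in precisely that induction. One cosmetic remark: the verification that $\b{A}\in\c{K}$ can be seen slightly more directly as $\b{A}$ being a free amalgam of $\b{K}_m$ and $\b{Y}(y_0,\dots,y_n)$ over $\b{K}|_{\{a_0,\dots,a_{n-1}\}}$, but your argument via $\mathrm{Forb}(\c{F})$ and irreducibility is equivalent.
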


\begin{proof}
	Simply build $\eta$ inductively in $<_\b{Y}$-order, using the left density of $\b{K}$ to ensure the extra condition.
\end{proof}
\vspace{2 mm}

\begin{prop}
	\label{Prop:NiceEmbedding}
	Suppose $\langle\b{Y}, <_\b{Y}\rangle$ and $\eta\in \oemb(\b{Y}, \b{K})$ are as in Lemma~\ref{Lem:NiceEmbedding}. Suppose $n = \eta(y)$ for some $y\in \b{Y}$, and let $y_0< y_1$ be consecutive elements of $\b{Y}$ with $y_1\leq y$. Then $c(n)|_{\eta(y_1)} = \Left(c(n)|_{\eta(y_0)+1}, \eta(y_1))$.
\end{prop}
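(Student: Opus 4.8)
The plan is to unwind the definitions of $c$ and of $\Left$, reducing the asserted equality to a statement about which points $i$ carry a non-trivial relation to $n$, and then to play the defining property of $\eta$ from Lemma~\ref{Lem:NiceEmbedding} off against the strict monotonicity of $\eta$.

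First I would recall that, by the definition of $\ct = \ct^\b{K}$, for any $i < n$ we have $c(n)(i) = R(i,n)$. Next, since $\eta$ is order-preserving we have $\eta(y_0) < \eta(y_1)$, so $\eta(y_0)+1 \le \eta(y_1)$, and $y_1 \le y$ gives $\eta(y_1) \le \eta(y) = n$; thus $\Left(c(n)|_{\eta(y_0)+1}, \eta(y_1)) \in T(\eta(y_1))$ is well defined, and by the definition of $\Left$ it is the function with domain $\eta(y_1)$ that agrees with $c(n)$ on $\{0,\dots,\eta(y_0)\}$ and equals $0$ on $\{\eta(y_0)+1,\dots,\eta(y_1)-1\}$. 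Since $c(n)|_{\eta(y_1)}$ automatically agrees with $c(n)$ on $\{0,\dots,\eta(y_0)\}$, the claimed identity is equivalent to the assertion that $R(i,n) = 0$ for every $i$ with $\eta(y_0) < i < \eta(y_1)$.

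I would then prove exactly this. Fix $i$ with $\eta(y_0) < i < \eta(y_1)$ and suppose, toward a contradiction, that $R(i,n) \neq 0$; then $\b{K}(i,n)$ carries a non-trivial binary relation, i.e.\ $\b{K}(i,n) \neq \rel(0)$. Since $i < \eta(y_1) \le \eta(y) = n$, the defining property of $\eta$ in Lemma~\ref{Lem:NiceEmbedding} (applied with ``$m$'' $= i$) yields $i \in \ran{\eta}$, say $i = \eta(z)$ with $z \in \b{Y}$. As $\eta$ is an ordered embedding, it is strictly order-preserving, so $\eta(y_0) < \eta(z) < \eta(y_1)$ forces $y_0 <_\b{Y} z <_\b{Y} y_1$, contradicting the fact that $y_0$ and $y_1$ are consecutive elements of $\b{Y}$. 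Hence $R(i,n) = 0$ for all such $i$, and the proposition follows. There is no real obstacle here: the statement is a bookkeeping lemma, and the only points that need care are the correct reading of the hypothesis of Lemma~\ref{Lem:NiceEmbedding} — a non-trivial relation between $i$ and $n$ forces $i$ into $\ran{\eta}$ — and the identification $c(n)(i) = R(i,n)$ from Definition~\ref{Def:CodingTree}; with these in hand, strict monotonicity of $\eta$ closes the argument.
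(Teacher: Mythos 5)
Your proof is correct and follows essentially the same route as the paper: you invoke the contrapositive of Lemma~\ref{Lem:NiceEmbedding} to force $R(i,n)=0$ for any $i\notin\ran{\eta}$ with $i<n$, and use strict monotonicity of $\eta$ plus the consecutiveness of $y_0,y_1$ to see that every $i$ with $\eta(y_0)<i<\eta(y_1)$ falls into this case, which is exactly what $\Left$ demands. The paper compresses the last observation into a single sentence, but the underlying argument is identical.
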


\begin{proof}
	By assumption, $R(n, m) = 0$ for any $m< n$ with $m\not\in \eta[\b{Y}]$, implying that $c(n)(m) = 0$ for all such $m$.
\end{proof}
\vspace{2 mm}

We turn towards the construction of $\b{Y}$. If $a< \omega$, let 
$$\Irr(a):= \bigsqcup_{\b{I}\in \Irr(\c{K})} \{f\in \oemb(\b{I}, \b{K}): f(|\b{I}|-1) = a\},$$ 
and write $\Irr(a) = \{\Irr(a, r): r< |\Irr(a)|\}$. We also write $|\dom{\Irr(a, r)}| = d(a, r) < \omega$. We define the underlying set of $\b{Y}$ to be 
$$\omega\sqcup \{(a, r, b): a < \omega, r < |\Irr(a)|, b< d(a, r)\}.$$
On $\omega$, $\b{Y}$ is just $\b{K}$. On $\b{Y}\setminus \omega$, we demand that for each $a< \omega$ and $r< |\Irr(a)|$, $\{(a, r, b): b< d(a, r)\}$ is an induced copy of $\dom{\Irr(a, r)}$. There are no other relations between members of $\b{Y}\setminus \omega$. Now suppose $(a, r, b)\in \b{Y}$ and $n< \omega$. We define
\begin{align*}
R^\b{Y}(n, (a, r, b)) = \begin{cases}
0\quad &\text{if } n\leq a,\\
R(n, \Irr(a, r)(b))\quad &\text{if } n> a.
\end{cases}
\end{align*}  
We define $<_\b{Y}$ to extend the usual order on $\omega$. We set $a-1 <_\b{Y} (a, r, b) <_\b{Y} a$, and $(a, r_0, b_0) <_\b{Y} (a, r_1, b_1)$ iff $r_0 < r_1$ or $(r_0 = r_1$ and $b_0 < b_1)$.

Now let $\eta\in \oemb(\b{Y}, \b{K})$ satisfy the conclusion of Lemma~\ref{Lem:NiceEmbedding}. If $s< \omega$, we define $\mathrm{Start}(s) = \max(n: c(s)|_n = 0^n)$, and if $S\subseteq \omega$, we set $\mathrm{Start}(S) = \{\mathrm{Start}(s): s\in S\}$.
\vspace{2 mm}

\begin{prop}
	\label{Prop:CritOutsideCopy}
	Suppose $S\subseteq \eta[\b{K}]$ is finite. Then any $n\in \sp(S)\cup \ac(S)\cup \mathrm{Start}(S)$ is of the form $\eta((a_n, r_n, b_n))$. 
\end{prop}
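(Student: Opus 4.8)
The plan is to unwind the definitions of $\crit(S)$ and $\mathrm{Start}(S)$ using the combinatorial description of $\b{Y}$ and the special property of $\eta$ from Lemma~\ref{Lem:NiceEmbedding}, via Proposition~\ref{Prop:NiceEmbedding}. First I would fix a finite $S\subseteq \eta[\b{K}]\subseteq \ran{\eta}$ and take $n\in \crit(S)\cup \mathrm{Start}(S)$; since $n<\max(S)\leq \max(\ran\eta)$ (for $\crit$) or $n\leq \max(S)$ (for $\mathrm{Start}$), it suffices to show $n\in\ran{\eta}$, for then $n=\eta(z)$ for some $z\in\b{Y}$, and by construction either $z\in\omega$ (so $z=(a_n,r_n,b_n)$ is disallowed — but wait, we must rule this out) or $z=(a_n,r_n,b_n)\in\b{Y}\setminus\omega$. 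So the real content is twofold: (i) $n\in\ran\eta$, and (ii) $n$ is the image of a point of $\b{Y}\setminus\omega$, not of a point of $\omega=\b{K}$.

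For (i): if $n\in\mathrm{Start}(S)$, then $n=\mathrm{Start}(s)$ for some $s\in S$, meaning $c(s)|_n=0^n$ but $c(s)(n)\neq 0$, i.e.\ $R(n,s)\neq 0$ where $s=\eta(y)$; by Lemma~\ref{Lem:NiceEmbedding} this forces $n\in\ran\eta$. If $n\in\crit(S)$, then $\pi_n\colon c[S]|_{n+1}\to T(n)$ fails to be an age map, so either it is non-injective — meaning two elements $s,s'\in S$ with $s,s'>n$ have $c(s)|_{n+1}\neq c(s')|_{n+1}$ but $c(s)|_n=c(s')|_n$, so $c(s)(n)\neq c(s')(n)$, whence at least one of $R(n,s)$, $R(n,s')$ is nonzero, giving $n\in\ran\eta$ by Lemma~\ref{Lem:NiceEmbedding} — or it is injective but not an age map, an "age change" at $n$. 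The age-change case is the one requiring care: here I would argue that a genuine age change at level $n$ cannot happen using only the relation $R(\cdot,n)=0$, because by Proposition~\ref{Prop:ExtendAgeMap} (free amalgamation) adjoining a point with no relations never changes whether a structure is in $\c{K}$. So if $n\notin\ran\eta$ then Proposition~\ref{Prop:NiceEmbedding} (applied with $y_0,y_1$ the consecutive elements of $\b{Y}$ straddling $n$, which exist since everything below $\max S$ is either in $\ran\eta$ or squeezed between consecutive $\ran\eta$-values) shows that passing from level $n$ to level $n+1$ on $c[S]$ only appends zeros, so $\pi_n$ restricted to $c[S]|_{n+1}$ is, up to the trivial relabeling, the identity composed with left-successor, hence an age map by Proposition~\ref{Prop:ExtendAgeMap} — contradiction. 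Thus $n\in\ran\eta$ in all cases.

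For (ii): suppose $n=\eta(y)$ with $y\in\omega$, i.e.\ $y\in\b{K}\subseteq\b{Y}$. I would show this contradicts $n\in\crit(S)\cup\mathrm{Start}(S)$. The point is that for $y\in\b{K}$, the coding node $c(n)=c(\eta(y))$ and the successor structure at level $n$ are governed directly by $\b{K}$'s own coding tree, and one should check (using left density and the definition of $\ct=\ct^\b{K}$) that a genuine split or age change at level $n$ among elements of $\eta[\b{K}]$ forces $n$ itself to be of the special form $\eta((a,r,b))$ — equivalently, the elements of $\b{Y}\setminus\omega$ were added precisely to witness exactly the irreducible-structure embeddings that could force an age change, and the coding nodes coming from $\omega\subseteq\b{Y}$ are "generic" enough (by left density) that they don't create critical values relative to later elements of $\eta[\b{K}]$. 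Here I would lean on Proposition~\ref{Prop:BoundedCrit}'s analysis: a critical value of age-change type is detected by some $\b{I}\in\Irr(\c{K})$ and an embedding into a forbidden structure, and the construction of $\b{Y}$ ensures every such configuration "above" a point of $\b{K}$ is already realized by one of the added points $(a,r,b)$.

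The main obstacle I expect is case (ii), the age-change analysis: carefully tracking why the extra points of $\b{Y}\setminus\omega$ capture \emph{all} possible age changes, so that any $n\in\crit(S)$ of age-change type must land on one of them rather than on a "pure" point of $\b{K}$. This will require matching the bookkeeping in the definition of $\b{Y}$ (which enumerates $\Irr(a)$ and adds a copy of each $\dom{\Irr(a,r)}$ ending at $a$) against the minimality argument in the proof of Proposition~\ref{Prop:BoundedCrit}, showing that if level $n$ genuinely changes the age relative to $S\subseteq\eta[\b{K}]$ then the witnessing forbidden substructure's "last" irreducible-embedding target is exactly $n$, forcing $n\in\ran\eta$ to have been contributed by a triple $(a,r,b)$. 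The splitting and $\mathrm{Start}$ cases, and part (i) generally, should be routine once Lemma~\ref{Lem:NiceEmbedding} and Propositions~\ref{Prop:NiceEmbedding}, \ref{Prop:ExtendAgeMap} are in hand.
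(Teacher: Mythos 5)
Your scaffolding is correct: split $\crit(S)\cup\mathrm{Start}(S)$ into the splitting, age-change, and start cases, show $n\in\ran\eta$, then show $n$ lands on a point of $\b{Y}\setminus\omega$ rather than $\omega$. Part (i) as you present it is sound, and in the age-change case your route (if $n\notin\ran\eta$ then every string in $c[S]|_{n+1}$ ends in $0$, so $\pi_n$ is an age map by free amalgamation) is arguably cleaner and more uniform than what the paper does, which instead extracts a witnessing copy of some $\b{F}\in\c{F}$ and uses irreducibility of $\b{F}$ plus Lemma~\ref{Lem:NiceEmbedding} to see each point of the witness below $n$ is in $\ran\eta$.

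The gap is part (ii), where essentially all the content lives, and where you offer only a restatement of the goal. Two specific issues. First, you dismiss the splitting and $\mathrm{Start}$ cases of (ii) as ``routine,'' but they are not: to rule out $n=\eta(a)$ with $a\in\omega$ one needs the observation that for some $r$, $\Irr(a,r)$ is the singleton embedding mapping a single point to $a$, so that $\eta\bigl((a,r,0)\bigr)<\eta(a)=n$ has \emph{the same} relations to every $\eta(s)$ with $s>a$ as $\eta(a)$ does; then $\eta\bigl((a,r,0)\bigr)$ would also be a split (resp.\ start) level, contradicting minimality of $n$ as a meet level (resp.\ as $\mathrm{Start}(s)$). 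Without that device the claim does not follow from Lemma~\ref{Lem:NiceEmbedding} or Proposition~\ref{Prop:NiceEmbedding} alone. Second, your treatment of the age-change case is a description of the strategy, not an argument: you must (a) take a minimal $(\b{B},\phi)$ and a copy $g$ of some $\b{F}\in\c{F}$ through $n$; (b) argue that if $\ran{g}\cap\eta[\b{K}]\neq\emptyset$, you can replace $g$ by some $g_0$ with $\ran{g_0}\setminus\b{B}\subseteq\eta[\b{K}]$ (here the construction of $\b{Y}$, namely $R^\b{Y}((a,r,b),m)=R(\Irr(a,r)(b),m)$ for $m>a$, is what makes the replacement possible); and (c) supposing $n=\eta(m)$ with $m\in\omega$, locate the $r$ with $\eta[\ran{\Irr(m,r)}]\cup\b{B}=\ran{g}$ and observe that the block $\eta[\{(m,r,b):b<d(m,r)\}]$ already realizes a copy of $\b{F}$ together with $\b{B}$ at a level strictly below $n$, contradicting minimality of $(\b{B},\phi)$. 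You correctly anticipate that this is the crux, but as written the proposal does not supply it.
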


\begin{proof}
	First assume $n\in \sp(S)$. Then for some $\eta(s), \eta(t)\in S\setminus (n+1)$, $n$ is least such that $R(n, \eta(s))\neq R(n, \eta(t))$. Since $\eta$ satisfies the conclusion of Lemma~\ref{Lem:NiceEmbedding}, we must have $n = \eta(y)$ for some $y\in \b{Y}$. Suppose $a< \omega$ and $R(\eta(a), \eta(s))\neq R(\eta(a), \eta(t))$. Suppose $r< |\Irr(a)|$ is such that $\Irr(a, r)$ is the map from the singleton structure with unary $U(a)$. Then $R(\eta((a, r, 0)), \eta(s)) = R(\eta(a), \eta(s))$ and likewise for $t$. Therefore we cannot have $n = \eta(a)$ since $\eta((a, r, 0)) < \eta(a)$. A modification of this argument also works when $n = \mathrm{Start}(\eta(s))$ for some $\eta(s)\in S$.
	
	Now suppose $n\in \ac(S)$. Let $(\b{B}, \phi)$ be a $c[S]|_{n+1}$-labeled $\c{L}$-structure with $\b{B}[\pi_n\circ \phi]\in \c{K}$, but $\b{B}[\phi]\not\in \c{K}$, and assume that $(\b{B}, \phi)$ is minimal with this property. This means that for some $\b{F}\in \c{F}$, there is $g\in \emb(\b{F}, \b{B}[\phi])$ with $\b{B}\cup \{n\}\subseteq \im{g}$. Since $S\subseteq \eta[\b{K}]$ and $\eta$ satisfies the conclusion of Lemma~\ref{Lem:NiceEmbedding}, we must have $\im{g}\setminus \b{B}\subseteq \eta[\b{Y}]$, so in particular $n = \eta(y)$ for some $y\in \b{Y}$. Furthermore, we note that for some fixed $a< \omega$ and $r< |\mathrm{Irr}(a)|$, we have
	$$\im{g}\setminus (\b{B}\cup \eta[\b{K}])\subseteq \eta[\{(a, r, b): b< d(a, r)\}].$$ 
	So if $\im{g}\cap \eta[\b{K}] = \emptyset$, then $n\in \eta[\{(a, r, b): b< d(a, r)\}]$, and we are done. Towards a contradiction, suppose not.
	By the construction of $\b{Y}$, $a< m$ for every $m< \omega$ with $\eta(m)\in \im{g}$. This allows us to replace $g$ by $g_0$, where given $v\in \b{F}$, we set $g_0(v) = g(v)$ if $g(v) = \eta(m)$ for some $m< \omega$, and if $g(v) = \eta((a, r, b))$, we set $g_0(v) = \eta(\Irr(a, r)(b))$. Doing this, we may assume that $\im{g}\setminus \b{B}\subseteq \eta[\b{K}]$.   
	
	Continuing the contradiction, suppose $n = \eta(m)$ for some $m< \omega$. Find $r< |\Irr(m)|$ with $\eta[\im{\Irr(m, r)}]\cup \b{B} = \im{g}$. But now consider $\{(m, r, b): b< d(m, r)\}\subseteq \b{Y}$; for notation, set $(m, r, d(m, r)-1) = x$, and note that $\eta(x) < \eta(m) = n$. By the construction of $\b{Y}$ and since $S\subseteq \eta[\b{K}]$, we have that $\b{B}[\pi_{\eta(x)+1}\circ \phi]$ embeds $\b{F}$. More precisely, $\eta[\{(m, r, b): b< d(m, r)\}]\cup \b{B}$ is a copy of $\b{F}$, a contradiction as $\eta(x)+1 \leq n$.
\end{proof}
\vspace{2 mm}

\begin{prop}
	\label{Prop:NiceEnvelope}
	Fix $S\subseteq \eta[\b{K}]$, and set
	$$E := S\cup \eta[\{(a_n, r_n, b): b< d(a_n, r_n), n\in \sp(S)\cup\ac(S)\cup \mathrm{Start}(S)\}],$$
	where $(a_n, r_n, b_n)$ is as in Proposition~\ref{Prop:CritOutsideCopy}. Then $E$ is an envelope.
\end{prop}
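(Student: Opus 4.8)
The plan is to verify, for $E$, the two conditions characterizing envelopes in Proposition~\ref{Prop:ComboEnvelope}: first, meet-closure, $\ell(c(s)\wedge c(t))\in E$ for all $s,t\in E$; and second, $\crit(E)\subseteq E$, which --- since $\max(E)\in E$ --- is exactly what is needed for $\pi_m\colon c[E]|_{m+1}\to T(m)$ to be an age map at every $m\leq\max(E)$ with $m\notin E$. Two preliminary observations will be used repeatedly. First, $\crit(S)\cup\mathrm{Start}(S)\subseteq E$: by Proposition~\ref{Prop:CritOutsideCopy} each such value equals $\eta((a_n,r_n,b_n))$ with $b_n<d(a_n,r_n)$, and $E$ was defined so as to contain $\eta((a_n,r_n,b))$ for \emph{every} $b<d(a_n,r_n)$. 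Second, by Lemma~\ref{Lem:NiceEmbedding} every coding node $c(s)$ with $s\in E\subseteq\ran{\eta}$ vanishes off $\ran{\eta}$, and, tracing through the definition of $\b{Y}$, the node $c(\eta((a,r,b)))$ is nonzero only at positions $\eta((a,r,b'))$ with $b'<b$; consequently the coding nodes of the elements of an adjoined block $C_n:=\eta[\{(a_n,r_n,b):b<d(a_n,r_n)\}]$ pairwise agree except at positions in $C_n$, so every meet of two elements of $C_n$ lies in $C_n$.

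For meet-closure, fix $s,t\in E$. If $s,t\in S$, then $\ell(c(s)\wedge c(t))$ is $\min(s,t)\in S$ or a splitting position of two coding nodes of $S$, hence lies in $\sp(S)\subseteq\crit(S)\subseteq E$; this subsumes the subcase in which the meet is a $\mathrm{Start}$-value of $s$ or $t$, which is still such a splitting position. If $s,t$ lie in the same block, the second preliminary observation places the meet in that block. In the remaining cases --- $s,t$ in distinct blocks, or one in a block and one in $S$ --- the restricted support of a block-node forces the meet either to be a position inside a block, or to be the first nonzero position of one of the two nodes: the first nonzero position of an element of $S$ is a $\mathrm{Start}$-value lying in $\mathrm{Start}(S)\subseteq E$, and the first nonzero position of an element of a block lies in that block. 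Thus $\ell(c(s)\wedge c(t))\in E$ in every case, which establishes the first condition, and in particular gives $\sp(E)\subseteq E$.

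It remains to show $\ac(E)\subseteq E$, which I expect to be the crux of the proof. The argument parallels the second half of the proof of Proposition~\ref{Prop:CritOutsideCopy}. Given $m\in\ac(E)$, choose a $c[E]|_{m+1}$-labeled structure $(\b{B},\phi)$, minimal with $\b{B}[\pi_m\circ\phi]\in\c{K}$ but $\b{B}[\phi]\notin\c{K}$; minimality yields $\b{F}\in\c{F}$ and $g\in\emb(\b{F},\b{B}[\phi])$ with $\b{B}\cup\{m\}\subseteq\ran{g}$, and forces $\b{B}\neq\emptyset$. Writing $\phi(b)=c(s_b)|_{m+1}$ with $s_b\in E$, irreducibility of $\b{F}$ gives $R^\b{K}(m,s_b)=\phi(b)(m)\neq 0$, so $m=\eta(y)$ for some $y\in\b{Y}$ by Lemma~\ref{Lem:NiceEmbedding}, and the same lemma puts every old point of $\ran{g}$ below $m$ into $\ran{\eta}$. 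One then argues, as in Proposition~\ref{Prop:CritOutsideCopy}, that $y$ must be a point of one of the adjoined blocks: otherwise, after a preliminary normalization --- pushing each label $c(s_b)|_{m+1}$ coming from a block-node, as well as the top node $c(m)|_{m+1}$, down to the coding node of the old point of $\b{K}$ beneath which the relevant block was copied, using that in $\b{Y}$ each such block faithfully reproduces a member of $\Irr(\c{K})$ --- one would realize $\b{F}$ in a configuration incompatible with $\b{B}[\pi_m\circ\phi]\in\c{K}$ (equivalently, inside $\b{Y}$, contradicting $\age{\b{Y}}\subseteq\c{K}$); hence $y$ lies in some $C_n$ with $n\in\crit(S)\cup\mathrm{Start}(S)$, so $m\in C_n\subseteq E$. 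The normalization step, which must handle block-valued labels and the new top point at once, is where the precise interplay between $\b{Y}$, $\eta$, and Lemma~\ref{Lem:NiceEmbedding} is essential. Feeding the two verified conditions into Proposition~\ref{Prop:ComboEnvelope} then shows that $E$ is an envelope.
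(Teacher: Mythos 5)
Your proposal takes a genuinely different route from the paper. You verify the two conditions of Proposition~\ref{Prop:ComboEnvelope} directly (meet-closure, then $\ac(E)\subseteq E$), whereas the paper uses the consecutive-elements rephrasing of those conditions. The paper's proof is short because of one organizing move you do not exploit: for consecutive $e_0<e_1\in E$, either $e_0,e_1$ are consecutive within a single adjoined block (handled at once by Proposition~\ref{Prop:NiceEmbedding}), or else every $s\in E\setminus S$ with $s\geq e_1$ has $\mathrm{Start}(s)\geq\eta((a_n,r_n,0))\geq e_1$, so $c[E]|_{e_1}\subseteq c[S]|_{e_1}\cup\{0^{e_1}\}$. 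This collapses the level-$e_1$ picture back onto $S$, and then the already-known inclusion $\crit(S)\cup\mathrm{Start}(S)\subseteq E$ finishes both the injectivity and the age-map check in a few lines. In particular, the paper never has to re-run the Proposition~\ref{Prop:CritOutsideCopy} machinery for $E$.

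The main gap is in what you yourself flag as the crux, $\ac(E)\subseteq E$. Your ``normalization'' sketch — replacing block-node labels $c(s_b)|_{m+1}$ and the top coding node by the labels of the ``old points'' $\Irr(a,r)(b)$ — is not a well-defined operation as stated: for a block element $s_b=\eta((a,r,b))$ the putative replacement point $\eta(\Irr(a,r)(b))$ may lie at or below level $m$, so it does not produce a new label in $T(m+1)$, and it is unclear how this is supposed to interact with the witness $g\colon\b{F}\to\b{B}[\phi]$. More importantly, you never isolate the dichotomy the paper relies on. If $m\not\in E$ and $m$ sits between two block elements that are consecutive in $\langle\b{Y},<_\b{Y}\rangle$, Proposition~\ref{Prop:NiceEmbedding} already forces $\pi_m$ to be the inverse of a left-successor map on $c[E]|_{m+1}$, hence an age map, so there is nothing to normalize; otherwise every block label restricted below $e_1$ is identically zero (by the very support observation you record), and the problem reduces to $\ac(S)\subseteq E$, which needs no new structural argument at all. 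Without this split, attempting to re-prove a version of Proposition~\ref{Prop:CritOutsideCopy} with block-valued labels is both harder than necessary and, as written, does not go through.

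There is also a smaller gap in the meet-closure argument. In the case $s\in C_n$, $t\in S$, your claim that the meet is ``inside a block or at a $\mathrm{Start}$-value'' relies on more than the support fact you state. You also need that there is no element of $\ran{\eta}$ strictly between $\eta((a_n,r_n,b'))$ and $\eta((a_n,r_n,b'+1))$ — this is what rules out the meet landing in a gap between block elements, where $c(s)$ vanishes but one must argue that $c(t)$ also vanishes. That fact follows from $(a_n,r_n,b')$ and $(a_n,r_n,b'+1)$ being $<_\b{Y}$-consecutive together with $\eta$ being an ordered embedding (this is essentially Proposition~\ref{Prop:NiceEmbedding}), but it is not contained in your stated preliminary observations. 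The claim is true, but the argument as given does not establish it.
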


\begin{proof}
	As a preliminary observation, notice that if $(a, r, b)\in \b{Y}$, then since $\eta$ satisfies the conclusion of Lemma~\ref{Lem:NiceEmbedding}, we have $\mathrm{Start}(\eta((a, r, b)))\geq \eta((a, r, 0))$.
	
	Now suppose $e_0< e_1$ are consecutive members of $E$. We need to show that \newline $\pi_{e_0+1}\colon c[E]|_{e_1}\to T(e_0+1)$ is an age map. If $e_0 = \eta((a, r, b))$ and $e_1 = \eta((a, r, b+1))$, then we are done by Proposition~\ref{Prop:NiceEmbedding}. So we may assume that either $e_1 = \eta((a, r, 0))$ or that $e_1\in S$. In either case, our preliminary observation yields that $c[E]|_{e_1}\subseteq c[S]|_{e_1}\cup \{0^{e_1}\}$. If $\pi_{e_0+1}$ is not injective on $c[E]|_{e_1}$, this implies that some member of $\sp(S)\cup \mathrm{Start}(S)$ lies between $e_0$ and $e_1$, a contradiction. If $\pi_{e_0+1}$ is injective, but not an age map on $c[E]|_{e_1}$, then it is also not an age map on $c[S]|_{e_1}$, meaning that some member of $\crit(S)$ lies between $e_0$ and $e_1$, a contradiction.
\end{proof}
\vspace{2 mm}

We can now argue that $\eta|_\b{K}$ satisfies the two assumptions of Theorem~\ref{Thm:EnvelopesImplyDegrees}. Fix a finite $S\subseteq \eta[ \b{K}]$. The envelope $E$ constructed in Proposition~\ref{Prop:NiceEnvelope} has size which is bounded by a function of $|S|$, so the first assumption holds. For the second, we note that $E\setminus S\subseteq \eta[\b{Y}]\setminus \eta[\b{K}]$. So also $\overline{S}\setminus S\subseteq \eta[ \b{Y}]\setminus \eta[\b{K}]$. Since this is true for \emph{any} finite $S\subseteq \eta[\b{K}]$, we have $\overline{(S\setminus \{s\})}\setminus (S\setminus \{s\})\subseteq \eta[\b{Y}]\setminus \eta[ \b{K}]$ for each $s\in S$. In particular, $s\not\in \overline{S\setminus \{s\}}$.

\appendix

\section{Appendix: Forcing}

This appendix provides a self-contained introduction to the ideas from forcing needed in the proof of Theorem~\ref{Thm:HL}. 

Let $\langle\bb{P}, \leq_\bb{P}\rangle$ be a poset. Given $p, q\in \bb{P}$ with  $q\leq_\bb{P} p$, we will say that $q$ \emph{extends} $p$ or that $q$ \emph{strengthens} $p$.
\vspace{2 mm}

\begin{defin}
	\label{Def:NameSubset}
	A \emph{$\bb{P}$-name for a subset of $\omega$} is any subset of $\bb{P}\times \omega$. 
\end{defin}
\vspace{2 mm}

Typically, $\bb{P}$-names are denoted by symbols with dots over them. 
\vspace{2 mm}

\begin{defin}
	\label{Def:ForcingSetNames}
	Fix a $\bb{P}$-name $\dot{L}\subseteq \bb{P}\times \omega$. Fix $p\in \bb{P}$.
	\begin{enumerate}
		\item 
		Given $n< \omega$, we write $p\Vdash ``n\in \dot{L}"$ and say ``$p$ forces that $n\in \dot{L}$" if for any $q\leq_\bb{P} p$, there is some $r\leq_\bb{P} q$ and some $r_0\geq_\bb{P} r$ with $(r_0, n)\in \dot{L}$.
		\item 
		Suppose $\dot{L}_0\subseteq \bb{P}\times \omega$ is another name. Then $p \Vdash ``\dot{L}_0\subseteq \dot{L}"$ if for any $q\leq_\bb{P} p$ and any $n< \omega$, we have $q\Vdash ``n\in \dot{L}_0"$ implies $q\Vdash ``n\in \dot{L}."$ A similar definition applies to $p\Vdash ``\dot{L}_0 = \dot{L}."$
		\item 
		$p\Vdash ``n\not\in \dot{L}"$ if there is no $q\leq_\bb{P} p $ with $q\Vdash ``n\in \dot{L}."$
		\item
		$p\Vdash ``\dot{L} \text{ is infinite}"$ if for any $q\leq_\bb{P} p$ and $m< \omega$, there are $r\leq_\bb{P} q$ and $n > m$ with $r\Vdash ``n\in \dot{L}."$
		\item 
		Suppose $\dot{L}_0,...,\dot{L}_{k-1}\subseteq \bb{P}\times \omega$ are names. Then $p\Vdash ``\bigcap_{i< k} \dot{L}_i \neq \emptyset"$ if for any $q\leq_\bb{P} p$, there are $r\leq_\bb{P} q$ and $n< \omega$ so that $r\Vdash ``n\in \dot{L}_i"$ for each $i< k$. 
		
		We have $p\Vdash ``\bigcap_{i< k} \dot{L}_i \text{ is infinite}"$ if for any $q\leq_\bb{P} p$ and any $m< \omega$, there are $r\leq_\bb{P} q$ and $n> m$ so that $r\Vdash ``n\in \dot{L}_i"$ for each $i< k$.
	\end{enumerate}
\end{defin}
\vspace{2 mm}

\begin{rem}
	In general, one can define the \emph{forcing relation} $p\Vdash \phi$ much more generally. Here and below, we choose to be much more explicit, defining $p \Vdash \phi$ by hand for only those $\phi$ that we will need. One general remark is worth mentioning: if $p\Vdash \phi$ and $q\leq_\bb{P} p$, then also $q\Vdash \phi$.
\end{rem}
\vspace{2 mm}

\begin{defin}
	\label{Def:ForcingPowersetNames}
	A \emph{$\bb{P}$-name for a collection of subsets of $\omega$} is any subset of $\bb{P}\times \c{P}(\bb{P}\times \omega)$. In particular, if $\dot{\c{F}}$ is a $\bb{P}$-name for a collection of subsets of $\omega$ and $(p, \dot{L})\in \dot{\c{F}}$, then $\dot{L}$ is a $\bb{P}$-name for a subset of $\omega$.
	
	Suppose $\dot{\c{F}}$ is a $\bb{P}$-name for a collection of subsets of $\omega$ and that $\dot{L}$ is a $\bb{P}$-name for a subset of $\omega$. Then given $p\in \bb{P}$, we have $p\Vdash ``\dot{L}\in \dot{\c{F}}"$ if for any $q\leq_\bb{P} p$, there are $r\leq_\bb{P} q$, $r_0\geq_\bb{P} r$,  and $\dot{L}_0\subseteq \bb{P}\times \omega$ with $(r_0, \dot{L}_0)\in \dot{\c{F}}$ and $r\Vdash ``\dot{L}_0 = \dot{L}."$
\end{defin}
\vspace{2 mm}

\begin{defin}
	\label{Def:ForcingFilterNames}
	Suppose $\dot{\c{F}}$ is a $\bb{P}$-name for a collection of subsets of $\omega$. We say that $\dot{\c{F}}$ has the \emph{finite intersection property}, or FIP, if for any $(p_i, \dot{L}_i)\in \dot{\c{F}}$ and any $q\in \bb{P}$ with $q\leq_\bb{P} p_i$ for each $i< k$, we have $q\Vdash ``\bigcap_{i< k} \dot{L}_i\neq \emptyset."$ We say that $\dot{\c{F}}$ has the \emph{strong FIP}, or SFIP, if in the above situation, we have $q\Vdash ``\bigcap_{i< k} \dot{L}_i \text{ is infinite}."$
	
	A name $\dot{\c{U}}\subseteq \bb{P}\times \c{P}(\bb{P}\times \omega)$ is a \emph{$\bb{P}$-name for an ultrafilter on $\omega$} if $\dot{\c{U}}$ is maximal with respect to having the FIP. If $\dot{\c{U}}$ also has the SFIP, we say that $\dot{\c{U}}$ is \emph{non-principal}.
\end{defin}
\vspace{2 mm}

\begin{prop}
	\label{Prop:ForcingUltExt}
	Suppose $\dot{\c{F}}$ is a $\bb{P}$-name for a collection of subsets of $\omega$ which has the FIP. Then there is a $\bb{P}$-name $\c{U}$ for an ultrafilter on $\omega$ with $\dot{\c{F}}\subseteq \dot{\c{U}}$. If $\dot{\c{F}}$ has the SFIP, then $\dot{\c{U}}$ can be chosen to be non-principal.
\end{prop}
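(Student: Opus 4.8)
The plan is to prove Proposition~\ref{Prop:ForcingUltExt} by a Zorn's lemma argument on the poset of $\bb{P}$-names for collections of subsets of $\omega$ that extend $\dot{\c{F}}$ and still have the FIP, ordered by inclusion of names (recall a $\bb{P}$-name for a collection of subsets of $\omega$ is literally a subset of $\bb{P}\times \c{P}(\bb{P}\times \omega)$, so ``$\dot{\c{F}}\subseteq \dot{\c{G}}$'' is honest set-theoretic containment). First I would check that the union of a chain of such names again extends $\dot{\c{F}}$ and has the FIP; this is immediate since the FIP, as defined in Definition~\ref{Def:ForcingFilterNames}, is a condition quantifying over finitely many pairs $(p_i,\dot{L}_i)$ from the name, so any violation of the FIP in the union already occurs in some member of the chain. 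Zorn then produces a name $\dot{\c{U}}\supseteq \dot{\c{F}}$ maximal among FIP-having extensions; I would then observe that $\dot{\c{U}}$ is in fact maximal among \emph{all} names with the FIP, since any strictly larger such name would also extend $\dot{\c{F}}$, contradicting maximality. Hence $\dot{\c{U}}$ is a $\bb{P}$-name for an ultrafilter on $\omega$ by definition.

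For the second assertion, suppose $\dot{\c{F}}$ has the SFIP. Here the natural move is to first enlarge $\dot{\c{F}}$ to $\dot{\c{F}}^+ := \dot{\c{F}}\cup \{(p, \dot{L}_{[m,\omega)}) : p\in \bb{P},\ m<\omega\}$, where $\dot{L}_{[m,\omega)} := \{(q,n) : q\in \bb{P},\ n\geq m\}$ is the canonical name for the cofinite set $[m,\omega)$. I would check that $\dot{\c{F}}^+$ still has the FIP: given finitely many pairs from $\dot{\c{F}}^+$ and a common extension $q$, split them into those coming from $\dot{\c{F}}$ and those of the form $(p,\dot{L}_{[m_j,\omega)})$; applying the SFIP of $\dot{\c{F}}$ to the $\dot{\c{F}}$-part, $q$ forces their intersection to be infinite, hence forces it to contain some $n$ above $\max_j m_j$, and such an $n$ lies in each $\dot{L}_{[m_j,\omega)}$ as well. (If there are no pairs from $\dot{\c{F}}$ in the chosen finite set, use that $q\Vdash ``\dot{L}_{[0,\omega)}\text{ is infinite}"$ directly, or note the empty intersection convention.) Then apply the first part to $\dot{\c{F}}^+$ to get $\dot{\c{U}}\supseteq \dot{\c{F}}^+\supseteq \dot{\c{F}}$, a name for an ultrafilter.

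It remains to see that this $\dot{\c{U}}$ has the SFIP, i.e.\ is non-principal. Given $(p_i,\dot{L}_i)\in \dot{\c{U}}$ for $i<k$ and $q\leq_\bb{P} p_i$ for all $i$, and given $m<\omega$, I would adjoin the pair $(q, \dot{L}_{[m+1,\omega)})\in \dot{\c{F}}^+\subseteq \dot{\c{U}}$ to the list; since $\dot{\c{U}}$ has the FIP, $q$ forces $\dot{L}_{[m+1,\omega)}\cap \bigcap_{i<k}\dot{L}_i\neq\emptyset$, which unwinds to exactly the statement that $q\Vdash ``\bigcap_{i<k}\dot{L}_i\text{ is infinite}"$ in the sense of Definition~\ref{Def:ForcingSetNames}(5)/(4) — for any $q'\leq_\bb{P} q$ and any $m$, strengthening $q'$ yields $r$ and $n>m$ with $r\Vdash ``n\in\dot{L}_i"$ for each $i$. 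I expect the main (though still modest) obstacle to be bookkeeping the definitions: one must take care that the somewhat nonstandard hand-crafted forcing relation of Definition~\ref{Def:ForcingSetNames} really does make ``$q\Vdash \bigcap_i \dot{L}_i\neq\emptyset$ for all $m$-shifted cofinite sets'' equivalent to ``$q\Vdash \bigcap_i\dot{L}_i$ is infinite,'' and that maximality with respect to FIP among extensions of $\dot{\c{F}}$ coincides with maximality among all FIP names — both are straightforward once the quantifiers are written out, but they are the only places where something could go wrong.
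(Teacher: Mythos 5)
Your proposal is correct and follows essentially the same route as the paper: Zorn's lemma for the existence of the ultrafilter name, then adjoining the canonical names $\bb{P}\times(\omega\setminus m)$ for the cofinite sets (your $\dot{L}_{[m,\omega)}$ is precisely the paper's $\bb{P}\times(\omega\setminus n)$) to pass from SFIP of $\dot{\c{F}}$ to FIP of the enlargement, and extracting non-principality of $\dot{\c{U}}$ from the fact that it contains all those names. You simply write out the routine verifications (chain closure for Zorn, FIP of $\dot{\c{F}}^{+}$, SFIP of $\dot{\c{U}}$) that the paper leaves implicit.
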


\begin{proof}
	The first claim follows from Zorn's lemma. For the second, suppose $\dot{\c{F}}$ has the SFIP. Set 
	$$\dot{\c{F}}_1 = \dot{\c{F}} \cup \{(p, \bb{P}\times (\omega\setminus n)): p\in \bb{P}, n< \omega\}.$$
	Then since $\dot{\c{F}}$ has the SFIP, $\dot{\c{F}}_1$ has the FIP, so let $\dot{\c{U}}\supseteq \dot{\c{F}}$ be a $\bb{P}$-name for an ultrafilter on $\omega$. Since $(p, \bb{P}\times (\omega\setminus n))\in \dot{\c{U}}$ for every $p\in \bb{P}$ and $n<\omega$, it follows that $\dot{\c{U}}$ has the SFIP.
\end{proof}
\vspace{2 mm}

We end by collecting some basic facts about names for ultrafilters on $\omega$, all of which are consequences of the fact that names for ultrafilters are maximal with respect to having the FIP.
\vspace{2 mm}

\begin{fact}
	\label{Prop:ForcingBasicUlts}
	Let $\dot{\c{U}}$ be a $\bb{P}$-name for an ultrafilter on $\omega$.
	\begin{enumerate}
		\item 
		If $p\in \bb{P}$ and $\dot{L}\subseteq \bb{P}\times \omega$, then $p\Vdash ``\dot{L}\in \dot{\c{U}}"$ iff $(p, \dot{L})\in \dot{\c{U}}$.
		\item 
		If $p\in \bb{P}$ and $\dot{L}_0, \dot{L}_1\subseteq \bb{P}\times \omega$ with $(p, \dot{L})\in \dot{\c{U}}$ and $p\Vdash ``\dot{L}_0\subseteq \dot{L}_1,"$ then $(p, \dot{L}_1)\in \dot{\c{U}}$.
		\item 
		Suppose $\dot{L}\subseteq \bb{P}\times \omega$ and $\dot{L} = \dot{L}_0\cup \dot{L}_1$. If $(p, \dot{L})\in \dot{\c{U}}$, then for some $q\leq_\bb{P} p$ and $i< 2$, we have $(q, \dot{L}_i)\in \dot{\c{U}}$.
	\end{enumerate}
\end{fact}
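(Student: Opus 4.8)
The statement to prove is Fact \ref{Prop:ForcingBasicUlts}, which collects three basic consequences of a $\bb{P}$-name for an ultrafilter being maximal with respect to the FIP. I note that each part is a straightforward unwinding of Definitions \ref{Def:ForcingSetNames}, \ref{Def:ForcingPowersetNames}, and \ref{Def:ForcingFilterNames}, so the plan is simply to argue each of the three items by hand, with maximality doing the real work in parts (1) and (3).

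\begin{proof}
	We prove each item in turn.

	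\emph{Item 1.} If $(p, \dot{L})\in \dot{\c{U}}$, then $p\Vdash ``\dot{L}\in \dot{\c{U}}"$ directly from Definition~\ref{Def:ForcingPowersetNames}, taking $r = r_0 = q$ and $\dot{L}_0 = \dot{L}$ for any $q\leq_\bb{P} p$, since then $q\Vdash ``\dot{L}_0 = \dot{L}"$ trivially. Conversely, suppose $p\Vdash ``\dot{L}\in \dot{\c{U}}"$ but $(p, \dot{L})\not\in \dot{\c{U}}$. Since $\dot{\c{U}}$ is maximal with respect to having the FIP, the name $\dot{\c{U}}\cup \{(p, \dot{L})\}$ fails the FIP, so there are $(p_i, \dot{L}_i)\in \dot{\c{U}}$ for $i< k$ and $q\leq_\bb{P} p$, $q\leq_\bb{P} p_i$, such that $q$ does not force $``\bigcap_{i< k}\dot{L}_i\cap \dot{L}\neq \emptyset"$; unwinding Definition~\ref{Def:ForcingSetNames}(5), there is $q'\leq_\bb{P} q$ witnessing this, i.e.\ no $r\leq_\bb{P} q'$ and $n< \omega$ has $r\Vdash ``n\in \dot{L}_i"$ for all $i< k$ and $r\Vdash ``n\in \dot{L}."$ But applying $p\Vdash ``\dot{L}\in \dot{\c{U}}"$ to $q'$ yields $r\leq_\bb{P} q'$, $r_0\geq_\bb{P} r$, and a name $\dot{L}_0$ with $(r_0, \dot{L}_0)\in \dot{\c{U}}$ and $r\Vdash ``\dot{L}_0 = \dot{L}."$ Since $\dot{\c{U}}$ has the FIP and $r\leq_\bb{P} p_i, r_0$, some further $s\leq_\bb{P} r$ and $n< \omega$ has $s\Vdash ``n\in \dot{L}_i"$ for all $i< k$ and $s\Vdash ``n\in \dot{L}_0."$ As $r\Vdash ``\dot{L}_0 = \dot{L}"$ and $s\leq_\bb{P} r$, also $s\Vdash ``n\in \dot{L},"$ contradicting the choice of $q'$.

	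\emph{Item 2.} Suppose $(p, \dot{L}_0)\in \dot{\c{U}}$ and $p\Vdash ``\dot{L}_0\subseteq \dot{L}_1."$ By item 1, $p\Vdash ``\dot{L}_0\in \dot{\c{U}},"$ so it suffices to show $p\Vdash ``\dot{L}_1\in \dot{\c{U}},"$ and then apply item 1 again. Given $q\leq_\bb{P} p$, apply $p\Vdash ``\dot{L}_0\in \dot{\c{U}}"$ to obtain $r\leq_\bb{P} q$, $r_0\geq_\bb{P} r$, and $\dot{L}_0'$ with $(r_0, \dot{L}_0')\in \dot{\c{U}}$ and $r\Vdash ``\dot{L}_0' = \dot{L}_0."$ Since $r\leq_\bb{P} p$, we have $r\Vdash ``\dot{L}_0\subseteq \dot{L}_1,"$ hence $r\Vdash ``\dot{L}_0'\subseteq \dot{L}_1."$ Thus $r$, $r_0$, $\dot{L}_0'$ witness the required instance of Definition~\ref{Def:ForcingPowersetNames} for $\dot{L}_1$ — except that we need $r\Vdash ``\dot{L}_0' = \dot{L}_1,"$ not merely containment. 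To fix this, argue instead that $(p, \dot{L}_1)\in \dot{\c{U}}$ directly: if not, by maximality $\dot{\c{U}}\cup\{(p, \dot{L}_1)\}$ fails the FIP, giving $(p_i, \dot{L}_i')\in \dot{\c{U}}$, $i< k$, and $q\leq_\bb{P} p, p_i$ with $q$ not forcing $``\bigcap_i \dot{L}_i'\cap \dot{L}_1\neq \emptyset."$ But $(p, \dot{L}_0)\in \dot{\c{U}}$ and the FIP give $r\leq_\bb{P} q$ and $n$ with $r\Vdash ``n\in \dot{L}_i"$ for all $i$ and $r\Vdash ``n\in \dot{L}_0,"$ whence $r\Vdash ``n\in \dot{L}_1"$ since $r\leq_\bb{P} p$; this contradicts the choice of $q$.

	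\emph{Item 3.} Suppose $(p, \dot{L})\in \dot{\c{U}}$ with $\dot{L} = \dot{L}_0\cup \dot{L}_1$, and suppose toward a contradiction that for every $q\leq_\bb{P} p$ and $i< 2$, $(q, \dot{L}_i)\not\in \dot{\c{U}}$. By maximality applied to $\dot{L}_0$: since $(q, \dot{L}_0)\not\in \dot{\c{U}}$ for all $q\leq_\bb{P} p$, there are $(p^0_j, \dot{M}^0_j)\in \dot{\c{U}}$ and $q_0\leq_\bb{P} p$, $q_0\leq_\bb{P} p^0_j$, and $q_0'\leq_\bb{P} q_0$ so that no $r\leq_\bb{P} q_0'$ and $n$ has $r\Vdash ``n\in \dot{M}^0_j"$ for all $j$ and $r\Vdash ``n\in \dot{L}_0."$ Similarly, starting from $q_0'$ in place of $p$ and applying this to $\dot{L}_1$, we get $(p^1_j, \dot{M}^1_j)\in \dot{\c{U}}$ and $q_1\leq_\bb{P} q_0'$ (with $q_1\leq_\bb{P} p^1_j$) and $q_1'\leq_\bb{P} q_1$ so that no $r\leq_\bb{P} q_1'$ and $n$ has $r\Vdash ``n\in \dot{M}^1_j"$ for all $j$ and $r\Vdash ``n\in \dot{L}_1."$ Now $q_1'\leq_\bb{P} p$ and $q_1'\leq_\bb{P} p^0_j, p^1_j$ for all $j$; since $(p, \dot{L})\in \dot{\c{U}}$ and $\dot{\c{U}}$ has the FIP, there are $r\leq_\bb{P} q_1'$ and $n$ with $r\Vdash ``n\in \dot{M}^0_j"$ and $r\Vdash ``n\in \dot{M}^1_j"$ for all $j$ and $r\Vdash ``n\in \dot{L}."$ To use Definition~\ref{Def:ForcingSetNames}(1) on $\dot{L} = \dot{L}_0\cup \dot{L}_1$, pass to $r'\leq_\bb{P} r$ and $r'_0\geq_\bb{P} r'$ with $(r'_0, n)\in \dot{L} = \dot{L}_0\cup \dot{L}_1$; then $(r'_0, n)\in \dot{L}_i$ for some fixed $i< 2$, so $r'\Vdash ``n\in \dot{L}_i."$ As $r'\leq_\bb{P} q_1'\leq_\bb{P} q_0'$, in either case $r'$, $n$ contradict the choice of $q_0'$ (if $i = 0$) or $q_1'$ (if $i = 1$), since $r'$ forces membership in all the relevant $\dot{M}^i_j$ as well. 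This contradiction completes the proof.
\end{proof}
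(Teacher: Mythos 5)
Your proof is essentially correct, and since the paper states this as a Fact with no written proof (beyond the remark that each item follows from maximality with respect to the FIP), yours is precisely the kind of argument the author intends; you also correctly read the typo $(p,\dot{L})$ in item 2 as $(p,\dot{L}_0)$. One small but genuine error in item 1, forward direction: you write ``taking $r = r_0 = q$,'' but then $(r_0,\dot{L}_0) = (q,\dot{L})$, and you only know $(p,\dot{L})\in\dot{\c{U}}$, not $(q,\dot{L})\in\dot{\c{U}}$. The fix is immediate: take $r = q$ and $r_0 = p$; since $q\leq_\bb{P} p$, Definition~\ref{Def:ForcingPowersetNames} allows $r_0\geq_\bb{P} r$, and $(p,\dot{L})\in\dot{\c{U}}$ is exactly the hypothesis. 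Also, in item 2 you apply the FIP directly to $q$ and claim this contradicts ``the choice of $q$,'' but ``$q$ does not force $\bigcap_i\dot{L}_i'\cap\dot{L}_1\neq\emptyset$'' only gives you a witness $q'\leq_\bb{P} q$ below which no suitable $r, n$ exist; you should apply the FIP at $q'$ (not $q$) and derive the contradiction with $q'$, exactly as you did correctly in item 1. With those two local repairs, the argument is complete; item 3 is handled carefully, including the subtle point that from $(r'_0,n)\in\dot{L}_i$ with $r'_0\geq_\bb{P} r'$ one gets $r'\Vdash ``n\in\dot{L}_i"$.
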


\vspace{15 mm}

\noindent
Andy Zucker

\noindent
Universit\'{e} Claude Bernard - Lyon 1

\noindent
zucker@math.univ-lyon1.fr

\end{document}